\def\cal{\mathcal}
\def\Bbb{\mathbb}
\def\rank{\text{\rm rank\,}}
\def\ord{\text{\rm ord\,}}
\def\pad{\phi^a}
\def \supp {\text{\rm supp\,}}
\def\ext{\text{\rm ext}}
\def\tp{{\tilde\phi}}
\def\A{{\cal A}}
\def\M{{\cal M}}
\def\N{{\cal N}}
\def\T{{\cal T}}
\def\bC{{\Bbb C}}
\def\NN{{\Bbb N}}
\def\bN{{\Bbb N}}
\def\bR{{\Bbb R}}
\def\RR{{\Bbb R}}
\def\bZ{{\Bbb Z}}
\def\ZZ{{\Bbb Z}}
\def\vp{{\varphi}}
\def\al{{\alpha}}
\def\be{{\beta}}
\def\ga{{\gamma}}
\def\Ga{{\Gamma}}
\def\la{{\lambda}}
\def\om{{\omega}}
\def\x{(x_1,x_2)}
\def\y{(y_1,y_2)}
\def\pa{{\partial}}
\def\Hess{{\rm Hess}}
\def\ve{{\varepsilon}}
\def\si{{\sigma}}
\def\de{{\delta}}
\def\Om{{\Omega}}
\def\ka{{\kappa}}
\def\pr{\text{\rm pr\,}}
\def\bpm{\begin{pmatrix}}
\def\epm{\end{pmatrix}}
\def\noi{\noindent}
\def\bee{\begin{enumerate}}
\def\ee{\end{enumerate}}
\def\qed{\smallskip\hfill Q.E.D.\medskip}
\newtheorem{thm}{Theorem}[section]
\newtheorem{prop}[thm]{Proposition}
\newtheorem{proposition}[thm]{Proposition}
\newtheorem{lemma}[thm]{Lemma}
\newtheorem{remark}[thm]{Remark}
\newtheorem{assumption}[thm]{Assumption}
\begin{document}


\title[maximal functions  associated to hypersurfaces in $\bR^3$]{Estimates for maximal functions  associated to hypersurfaces in $\bR^3$
with height $h<2:$ Part I}

\author[S. Buschenhenke]{Stefan Buschenhenke}
\address{School of Mathematics, Watson Building,
  University of Birmingham Edgbaston,
Birmingham B15 2TT, United Kingdom}
\email{{\tt buschenhenke@math.uni-kiel.de}}


 \author[S. Dendrinos]{Spyridon  Dendrinos}
\address{School of Mathematical Sciences, University College Cork, 
Western gateway Building, Western Road, Cork, 
Ireland} 
\email{{\tt sd@ucc.ie}}

\author[I. A. Ikromov]{Isroil A. Ikromov}
\address{Department of Mathematics, Samarkand State University,
University Boulevard 15, 140104, Samarkand, Uzbekistan}
 \email{{\tt ikromov1@rambler.ru}}

\author[D. M\"uller]{Detlef M\"uller}
\address{Mathematisches Seminar, C.A.-Universit\"at Kiel,
Ludewig-Meyn-Strasse 4, D-24118 Kiel, Germany} \email{{\tt
mueller@math.uni-kiel.de}}
\urladdr{{http://analysis.math.uni-kiel.de/mueller/}}

\thanks{2010 {\em Mathematical Subject Classification.}
42B25}
\thanks{{\em Key words and phrases.}
  Maximal operator, hypersurface, oscillatory integral, Newton diagram}
\thanks {We acknowledge the support for this work by the Deutsche Forschungsgemeinschaft under  DFG-Grant MU 761/11-1.\\
 This material is based in parts also upon work supported by the National Science Foundation under Grant No. DMS-1440140 while the last  author was in residence at the Mathematical Sciences Research Institute in Berkeley, California, during the Spring 2017 semester.\\
 The first author was supported by the European Research Council grant No. 307617.}

\begin{abstract}
In this article, we continue the study of the problem of $L^p$-boundedness   of the maximal operator $\M$ associated to averages along isotropic dilates of a given,  smooth hypersurface $S$ of finite type  in 3-dimensional Euclidean space.
An essentially  complete answer to  this problem  had been given about seven years  ago   by the last named two authors in joint work with  M. Kempe   \cite{IKM-max} for the case where  the height $h$  of the given surface is at least two.
In the present article,  we  turn to the case  $h<2.$  More precisely, in this Part I, we study the case where $h<2,$  assuming that  $S$ is contained in a sufficiently small  neighborhood of a given point $x^0\in S$ at which  both principal curvatures of $S$ vanish. Under these assumptions and a natural transversality  assumption, we show that, as in the case $h\ge 2,$ the critical Lebesgue exponent for the boundedness of $\M$ remains to be $p_c=h,$ even though the proof of this result turns out to require  new methods, some of which are inspired by the more recent work by  the last named  two authors on Fourier  restriction to $S.$  Results on the case where $h<2$ and exactly one principal curvature of $S$ does not vanish at $x^0$ will appear elsewhere.

 \end{abstract}
\maketitle


\tableofcontents

\thispagestyle{empty}

\setcounter{equation}{0}
\section{Introduction}\label{introduction}

Let $S$ be a smooth hypersurface in $\RR^n$ and let  $\rho\in
C_0^\infty(S)$ be a smooth non-negative function with compact support.  Consider
the associated averaging operators
$A_t, t>0,$ given by 
$$
A_tf(x):=\int_{S} f(x-ty) \rho(y) \,d\si(y),
$$
where $d\si$ denotes the surface measure on $S.$   The 
associated maximal operator is given by 
\begin{equation}\label{1.1}
\M f(x):=\sup_{t>0}|A_tf(x)|, \quad ( x\in \RR^n).
\end{equation}
We remark that by testing $\M$ on the characteristic function of the unit ball in $\RR^n,$ it is easy to see  that a necessary condition for $\M$ to be  bounded  on $L^p(\RR^n)$  is that  $p> n/(n-1),$ provided the transversality assumption \ref{s1.1} below is satisfied.
\smallskip

In 1976,  E.~M.~Stein \cite{stein-sphere} proved that, conversely, if $S$ is the Euclidean unit sphere in $\RR^n,\ n\ge 3, $  then the corresponding spherical maximal operator is bounded  on $L^p(\RR^n)$  for every $p> n/(n-1).$ The analogous result in dimension  $n=2$ was later proven by J.~Bourgain \cite{bourgain85}. The key property of spheres which allows to prove  such  results is the non-vanishing of the Gaussian curvature on spheres.  These results became  the starting point for intensive  studies of various classes of maximal operators associated to subvarieties. Stein's  monography  \cite{stein-book} is an excellent reference to many of these developments.

In the joint work  \cite{IKM-max} of the last-named two authors with M. Kempe,  maximal functions $\M$ associated to smooth hypersurfaces of finite type in $\RR^3$ had been studied under the  following transversality assumption on $S.$ 
\begin{assumption}[Transversality]\label{s1.1}
The affine tangent plane $x+T_xS$ to $S$ through $x$ does not pass through the origin in $\RR^3$  for every $x\in S.$ Equivalently, $x\notin T_xS$ for every $x\in S,$ so that $0\notin S$ and $x$ is transversal to $S$ for every point $x\in S.$ 
\end{assumption}
Let us fix a point $x^0\in S.$  We recall that the transversality  assumption allows us to find a linear change of coordinates in $\RR^3$ so that in the new coordinates
 $S$ can locally be represented as the graph of a function $\phi,$  and that the norm of $\M$ when acting on $L^p(\RR^3)$ is invariant under such a linear change of coordinates.
 More precisely, after applying a suitable  linear change  of coordinates  to $\RR^3$ we may assume that 
$x^0=(0,0,1),$  and that within the neighborhood $U,$ $S$  is given as the  graph
$$
U\cap S=\{(x_1,x_2,1+ \phi\x): \x\in \Om \}
$$
of a smooth function $1+\phi$ defined on an open neighborhood $\Om$ of $0\in\RR^2$ and satisfying the conditions
\begin{equation}\label{1.2}
\phi(0,0)=0,\, \nabla \phi(0,0)=0.
\end{equation}
The measure $\mu=\rho d\si$ is then explicitly given by 
$$
\int f\, d\mu=\int f(x,1+\phi(x)) \eta(x) \,dx,
$$
with a smooth, non-negative  bump function $\eta\in C_0^\infty(\Om),$ and we may write for  $(y,y_3)\in \RR^2\times \RR$
\begin{equation}\label{At}
A_tf(y,y_3)=f*\mu_t(y,y_3)=\int_{\bR^2} f(y-tx, y_3-t(1+\phi(x)))\eta(x) \, dx,
\end{equation}
where $\mu_t$ denotes the norm preserving scaling  of the measure  $\mu$ given by $\int f\, d\mu_t=\int f(tx, t(1+\phi(x)) \eta(x) \,dx.$

Recall also from \cite{IKM-max} that  the {\it height} of $S$ at the point $x^0$  is  defined by 
$h(x^0,S):=h(\phi),$  where $h(\phi)$ is the height of $\phi$ in the sense of Varchenko (which can be computed by means of Newton polyhedra attached to $\phi$). The height  is invariant under affine linear changes of coordinates in the ambient space  $\RR^3.$ 
\medskip

 In  \cite{IKM-max} the authors had given an essentially complete answer to the problem of $L^p$-boundedness of $\M$ when  $h(x^0,S)$ or $p$  are greater or equal  to $2.$ More precisely, if  $h(x^0,S)\ge 2,$  and if the density $\rho$ is supported in a sufficiently small neighborhood  of $x^0,$ then  the condition $p>h(x^0,S)$  is sufficient for $\M$ to be $L^p$-bounded, and this result is  sharp (with the possible exception of the endpoint $p_c=h(x^0,S),$ when $S$ is non-analytic). For an alternative approach to some of these results based on ''damping'' techniques, see also \cite {greenblatt}. Matters change dramatically  when the transversality assumption fails, as has been shown by E. Zimmermann in his doctoral thesis \cite{Zi}.  Zimmermann studied the case where the hypersurface passes through the origin and proved, among other things,  that for analytic $S$ and $\supp \rho$ sufficiently small, the condition $p>2$ is always sufficient for the $L^p$- boundedness of $\M.$ 

\medskip
 In the present article, we return to this problem and look at  the case where $h(x^0,S)<2.$  
It then turns out that there is a big difference in the behaviour of the associated maximal operator, depending on how many of the  principal curvatures of $S$ do vanish at $x^0$ (instances of this phenomenon have already been observed in  articles by  Nagel, Seeger,  Wainger \cite{nagel-seeger-wainger}, and Iosevich and Sawyer \cite{iosevich-sawyer1},  \cite{iosevich-sawyer}, \cite{io-sa-seeger}). The case where both principal curvatures do not vanish at $x^0$ is classical, and here the condition  $p>3/2$ is necessary and sufficient for the $L^p$-boundedness of $\M,$ exactly as in the case of the 2-sphere (see Greenleaf \cite{greenleaf}). Notice that in this case  $h(x^0,S)=1<3/2,$ so that, unlike the case where  $h(x^0,S)\ge 2,$ the height is not the controlling quantity for the maximal operator. Indeed, as mentioned before, the condition   $p>3/2$ is seen to be necessary by testing $\M$ on characteristic functions of small balls, whereas the notion of height is rather related to testing on characteristic functions of the intersection of a ball with a very  thin neighborhood of some hyperplane. 

We shall here  mainly consider the case where both principal curvatures of $S$ do vanish at $x^0,$ i.e., when $D^2\phi(0,0)=0.$ 
In this case, it turns out that the height is still the controlling quantity. More precisely,  our main theorem states the following:

\begin{thm}\label{s1.2}
Assume that $S$ is a smooth, finite-type  hypersurface in $\RR^3$  satisfying  the transversality assumption~\ref{s1.1}, and let $x^0\in S$ be a given  point at which $h(x^0,S)<2$ and both principal curvatures of $S$ do vanish.  

Then there exists a neighborhood $U\subset S $ of the point $x^0$ such
that for every  non-negative density $\rho\in C_0^\infty(U)$ the associated maximal
operator $\M$ is bounded on $L^p(\RR^3)$ whenever $p>h(x^0,S).$

\end{thm}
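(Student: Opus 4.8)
\emph{Plan of the proof.} We combine the dyadic decomposition scheme for maximal operators attached to finite-type hypersurfaces underlying \cite{IKM-max} (and going back to Stein \cite{stein-sphere} and to Iosevich--Sawyer \cite{iosevich-sawyer}) with sharper ``fixed-time'' estimates of $L^p$-improving type; for the most degenerate pieces of $S$ these will be extracted from the Fourier restriction/extension estimates for $S$ established by the last named two authors. First we normalize: by Assumption~\ref{s1.1} and the invariance of $\|\M\|_{L^p\to L^p}$ under linear coordinate changes we may assume that near $x^0=(0,0,1)$ the surface $S$ is the graph of $1+\phi$ with $\phi$ as in \eqref{1.2}; the vanishing of both principal curvatures at $x^0$ gives $D^2\phi(0,0)=0$ as well, so the Newton polyhedron of $\phi$ is contained in $\{(a,b):a+b\ge3\}$ and hence $3/2\le h(x^0,S)=h(\phi)<2$. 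Passing to coordinates adapted to $\phi$ we have $h(\phi)=d(\phi)$; since $d(\phi)\in[3/2,2)$ is not an integer and $S$ is of finite type, the principal face of the Newton polyhedron must be a compact edge meeting the bisectrix in its relative interior. We write $\ka=(\ka_1,\ka_2)$ for the associated weight, so $\ka_1+\ka_2=1/d(\phi)<1$, and $\phi_\ka$ for the $\ka$-homogeneous principal part of $\phi$.

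Next, by a dyadic partition of unity we split $\phi$ into a homogeneous piece supported where $|x_1|^{1/\ka_1}\sim|x_2|^{1/\ka_2}$, together with finitely many pieces localized near the remaining edges and the two vertices of the Newton polyhedron. On the dyadic subregion indexed by $\de=2^{-\ell}$, $\ell\ge0$, we apply the anisotropic dilation $x_j\mapsto\de^{\ka_j}x_j$ together with the companion anisotropic rescalings of $t$ and of the ambient space $\RR^3$ (and of the density). Each piece becomes a measure $\nu$ carried by a surface which, at unit scale, is the graph of a mixed-homogeneous polynomial---namely $\phi_\ka$ plus an error tending to $0$ with $\de$ on the homogeneous piece, and essentially a polynomial in one variable near a vertex---the rescaling contributing an explicit Jacobian power of $\de$ and the number $\ell$ of scales that must later be summed.

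We then insert a Littlewood--Paley decomposition in the frequency of the kernel, $\M\lesssim\sum_\lambda\M_\lambda$ with $\lambda=2^j\to\infty$, and reduce the theorem to the bound $\|\M_\lambda\|_{L^p\to L^p}\lesssim\lambda^{-\eps}$ for some $\eps=\eps(p)>0$ and every $p>h(x^0,S)$. For a single frequency-localized rescaled model we interpolate among three estimates: (i) the trivial $L^\infty\to L^\infty$ bound; (ii) an $L^2\to L^2$ bound, obtained from the $t$-Sobolev embedding $\sup_t|g_t|^2\lesssim\|g\|_{L^2(dt/t)}\,\|t\pa_t g\|_{L^2(dt/t)}$ and the van der Corput / stationary-phase decay of the Fourier transform of $\nu$ (here the ``$1$'' forced by transversality contributes the factor $\sim\lambda$ when $t\pa_t$ falls on the phase); and (iii) an estimate that gains over (i), of the form $\|f*\nu_t\|_{L^q}\lesssim\lambda^{-\si}\|f\|_{L^p}$ with $q>p$, or a corresponding local smoothing estimate, uniformly for $t\sim1$. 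The plain interpolation of (i) and (ii) already proves $\M$ bounded for $p\ge2$ under the sole hypothesis $h(x^0,S)<2$; the substance of the theorem is the range $h(x^0,S)<p<2$, and here the gain at low exponents furnished by (iii) is indispensable. Granting (iii), interpolating (i)--(iii) and undoing the rescaling, the geometric series in $\de$ converges precisely because $p>d(\phi)=h(x^0,S)$ and the series in $\lambda$ converges by the gain $\lambda^{-\eps}$, which yields the theorem.

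The main difficulty is to establish (iii) for the degenerate models. When $\phi_\ka$---or the homogeneous part attached to one of the other edges---has a real root of multiplicity $\ge2$, the rescaled model is not a surface of non-vanishing Gaussian curvature, so Greenleaf's theorem \cite{greenleaf} and its $L^p$-improving refinements do not apply directly; one must localize further near such a root, where, after a possibly non-linear shear $x_1\mapsto x_1-\psi(x_2)$ that does not commute with the dilations, the model is essentially a cylinder over a finite-type planar curve or a graph with a single non-vanishing principal curvature. The smoothing estimate (iii) for these pieces has to be obtained from local smoothing estimates for averages over such curves and cylinders together with the Fourier restriction/extension estimates for the corresponding pieces of $S$, organized by the stopping-time and induction-on-complexity arguments of the restriction theory. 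Carrying this out with bounds quantitative enough to survive the summation over the infinitely many scales $\de$ and $\lambda$, while keeping the dependence of all constants on $t$ under control via the transversality assumption, is the crux of the proof.
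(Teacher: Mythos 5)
Your plan correctly identifies the general scaffolding — reduce to a graph via transversality, classify by the Newton polyhedron, decompose dyadically via the $\ka$-dilations, decompose in frequency, and interpolate — but the step you single out as ``the crux'' is left entirely unproved, and the route you propose for it is not the one that actually works here, nor is it the paper's.

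The genuine gap is item~(iii). You propose to get the needed gain at low exponents from fixed-time $L^p$-improving estimates, local smoothing, and Fourier restriction/extension theory with stopping-time/induction-on-complexity arguments. First, it is not clear how interpolating your (i) $L^\infty\to L^\infty$, (ii) $L^2\to L^2$, and a fixed-time $L^p\to L^q$ bound would give $L^p\to L^p$ bounds on the \emph{maximal} operator for $p<2$: the convex hull of those three points does not reach the $L^p\to L^p$ diagonal below $p=2$, and passing from a fixed-time $L^p$-improving estimate to a maximal estimate usually re-introduces exactly the $L^2$-Sobolev loss you are trying to avoid unless one has a genuine local-smoothing estimate in space-time, which is itself a hard open-ended problem for these degenerate surfaces. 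Second, the paper does not use restriction or local-smoothing estimates at all. The actual proof relies on (a) a normal-form classification under \emph{linear} changes of variables (Theorem~\ref{normalform1}) that splits the $D^2\phi(0,0)=0$ case into types $D_{n+1}$ and $E_6,E_7,E_8$; (b) for the adapted cases the simple machine of Proposition~\ref{simple} already suffices, because the uniform oscillatory decay exponent $\gamma$ satisfies $d\ge 1+1/(2\gamma)$; (c) for the non-adapted $D_{n+1}$ cases a three-region decomposition (exterior, transition, principal root jet) followed by a full three-variable dyadic frequency decomposition, with Lemma~\ref{maxla} providing a weak-$(1,1)$ estimate (based on $\|\nu_{\de,0}^\la\|_\infty$) and an $L^2$ estimate (based on $\|\widehat{\nu_{\de,0}^\la}\|_\infty$), interpolated by Marcinkiewicz; and (d) for $D_4^+$ a reduction to the parameter-dependent $A_2$-result of Theorem~\ref{nondegairypar}, whose proof uses a dyadic decomposition in the distance to the Airy cone and the geometric estimate of Proposition~\ref{maxproj}, namely $\|\M_A\|_{p\to p}\lesssim R^n|\pi(A)|$, to get the needed $L^{1+\ve}$ bounds. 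The $L^1$-side estimates — not an $L^p$-improving/restriction input — are what allow the interpolation to dip below $p=2$, and they in turn rest on precise kernel estimates ($|\nu^\la|\lesssim\la$ on an explicit small set, together with fast decay elsewhere) and the transversality-induced translation $T$ which pushes the surface to a far annulus so that $|\pi(A)|$ is small. None of this is captured by, or replaceable with, the restriction-theoretic route you sketch, and without proving~(iii) your argument does not close.

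Two smaller points. You write that ``passing to coordinates adapted to $\phi$'' can be done at the outset; but adapted coordinates typically require a \emph{non-linear} shear, under which the $L^p$-norm of $\M$ is not invariant, so the passage to adapted coordinates must be performed inside the integral after the dyadic localization, as the paper does in Section~\ref{nonadac}. And your direction of the shear ($x_1\mapsto x_1-\psi(x_2)$) is reversed relative to the paper's root jet $x_2=\psi(x_1)$, which matters when matching the weights $\ka$ and $\ka^a$.
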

The condition $p>h(x^0,S)$ is indeed  also necessary, as the following  result shows, which does not require  that both principal curvatures of $S$  vanish at $x^0.$ 

\begin{thm}\label{s1.3}
Assume that  the maximal operator $\M$ is $L^p$-bounded, and that $S$   satisfies  the transversality assumption~\ref{s1.1}.  Then, for every point $x^0\in S$ at which  
$h(x^0,S)<2$ and $\rho(x_0)\neq0,$ we  necessarily have $p>h(x^0,S).$
\end{thm}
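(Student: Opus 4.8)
The plan is to exhibit a family of test functions concentrated near a thin neighborhood of a suitable hyperplane through (a dilate of) $x^0$, for which the maximal operator $\M$ is already large on a set of comparable measure, and then to read off the restriction $p > h(x^0,S)$ by comparing $L^p$-norms. Since the operator norm of $\M$ on $L^p$ is invariant under affine linear changes of coordinates in $\RR^3$ and under translations, I may assume, as in the reduction recalled in the introduction, that $x^0 = (0,0,1)$, that $S$ is locally the graph $\{(x,1+\phi(x)) : x\in\Om\}$ with $\phi(0)=0$, $\nabla\phi(0)=0$, and that $\rho(x^0)\neq 0$ forces $\eta(0)\neq 0$, so $\eta \gtrsim 1$ on a fixed small ball. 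The height $h=h(\phi)$ is computed from the Newton polyhedron of $\phi$ in adapted coordinates; by Varchenko's theory there is a linear (or, if necessary, a suitable polynomial but measure-preserving in the relevant sense) change of the $x$-variables putting $\phi$ into adapted coordinates with Newton distance $d = h$, realized at a vertex or compact edge of the Newton polyhedron. Associated to that edge is a one-parameter family of anisotropic dilations $\delta_r(x_1,x_2) = (r^{a_1}x_1, r^{a_2}x_2)$ under which the principal (edge) part $\phi_{\mathrm{pr}}$ of $\phi$ is homogeneous of degree $1$, normalized so that $a_1+a_2 = 1/h$.

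Next I would build the test function. Fix a small $\delta>0$ and let $E_\delta$ be the slab-like region
\[
E_\delta = \{(z,z_3)\in\RR^2\times\RR : |z|\le c,\ |z_3 - 1| \le \delta\},
\]
and set $f_\delta = \mathbf 1_{E_\delta}$, so $\|f_\delta\|_{L^p}^p \sim \delta$. The claim is that $\M f_\delta(y,y_3) \gtrsim 1$ for all $(y,y_3)$ in a set $G_\delta$ of measure $|G_\delta| \gtrsim \delta^{\,1 - \frac1h + o(1)}$ (more precisely a measure governed by the two-dimensional Lebesgue measure of the set of $x$ with $|\phi(x)| \lesssim \delta$, which is $\gtrsim \delta^{1/h}$ up to logarithmic factors by the very definition of the height / the Newton-polyhedron volume estimates). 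Indeed, for a point $(y,y_3)$ with $y$ small and $y_3$ close to $1$, choosing $t$ close to $1$, one has
\[
A_t f_\delta(y,y_3) = \int_{\RR^2} \mathbf 1_{E_\delta}\big(y - tx,\ y_3 - t(1+\phi(x))\big)\,\eta(x)\,dx,
\]
and the $z_3$-constraint $|y_3 - t(1+\phi(x)) - 1|\le\delta$ can be met, after fixing $t = t(y_3)$ appropriately, precisely on the set where $|\phi(x)|\lesssim\delta$; on that set $\eta\gtrsim 1$ and the $z$-constraint is automatic for $y$ small and $c$ chosen suitably, so $A_t f_\delta(y,y_3)\gtrsim |\{x : |\phi(x)|\lesssim\delta\}| \gtrsim \delta^{1/h - \ve}$ for every $\ve>0$. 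This forces $\M f_\delta(y,y_3) \gtrsim \delta^{1/h-\ve}$ on a set of $(y,y_3)$ of measure $\gtrsim \delta$ (an interval's worth of $y_3$ times a fixed ball of $y$). Feeding this into the assumed bound $\|\M f_\delta\|_{L^p} \le C\|f_\delta\|_{L^p}$ gives $\delta^{1/h-\ve}\cdot\delta^{1/p} \lesssim \delta^{1/p}$, i.e.\ $\delta^{1/h-\ve}\lesssim 1$ uniformly as $\delta\to0$ — which is vacuous — so the slab must instead be chosen so that its image under the averages spreads, not concentrates: one rescales the slab anisotropically using $\delta_r$ so that the gain in the $x$-measure and the loss in the $y$-measure are balanced, and the exponent bookkeeping then yields exactly $p\ge h$, with the strict inequality coming from the $\ve$ (or $\log$) losses at the critical edge.

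The key step, and the main obstacle, is the precise counting of the two measures: the Lebesgue measure $\lambda_\delta := |\{x\in\Om : |\phi(x)|\le\delta\}|$ and the measure of the set of $(y,y_3)$ on which $\M f_\delta$ is large. The first is controlled from below by the Newton-polyhedron / resolution-of-singularities description of $\phi$ near a point where $h(\phi)<2$: writing $\phi$ in adapted coordinates and using the homogeneity along the critical edge, one gets $\lambda_\delta \gtrsim \delta^{1/h}$ (up to a factor of $\log(1/\delta)$ when $d=h$ is attained on a compact edge rather than a vertex, which is exactly why only the open condition $p>h$ is obtained rather than $p\ge h$). The second measure requires checking that the map $(x,t)\mapsto (y,y_3) = (tx, t(1+\phi(x)))$ does not collapse the relevant region — this is where the transversality assumption enters, guaranteeing $1+\phi(x)$ is bounded away from $0$ so the $t$-variable genuinely parametrizes an interval of $y_3$ without degeneration — together with a Fubini argument. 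Once both are in hand, the inequality $\|\M f_\delta\|_p \lesssim \|f_\delta\|_p$ reads, after optimizing the anisotropic rescaling, as $p > h(x^0,S)$; the hypotheses that both principal curvatures vanish is not needed here, only $h(x^0,S)<2$ and $\rho(x^0)\neq0$, consistent with the statement. I would carry this out by first treating the model case where $\phi$ is already in adapted coordinates and its principal part is a single quasi-homogeneous monomial-type polynomial, then reducing the general finite-type $\phi$ to this model by Varchenko's adapted-coordinate algorithm and a standard limiting argument in $\ve$.
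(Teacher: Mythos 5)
Your proposal takes a genuinely different route from the paper: you attempt to re-derive a Iosevich--Sawyer type necessary condition from scratch by testing on thin slabs, whereas the paper's proof is essentially a reference argument — it uses Theorem~\ref{normalform1} to observe that for $h(\phi)<2$ the principal face of the Newton polyhedron of $\phi$ in adapted coordinates is always a \emph{compact} edge (cf.\ \eqref{phinada}), and then simply invokes the already-known necessary conditions, Proposition~12.1 and Theorem~1.5 of \cite{IKM-max}. Unfortunately your execution has a concrete error which misleads you into a spurious dead end. The set $G_\delta$ on which $\M f_\delta\gtrsim\lambda_\delta:=|\{x:|\phi(x)|\lesssim\delta\}|$ in fact has measure $\gtrsim 1$, not $\gtrsim\delta$: for each $y_3$ in a fixed interval around $2$ (not around $1$, since your slab sits at $z_3\approx 1$), choose $t=y_3-1\in(1/2,3/2)$; then the constraint $|y_3-t(1+\phi(x))-1|\le\delta$ reduces to $|\phi(x)|\lesssim\delta$, and $|y-tx|\le c$ holds for all $x\in\supp\eta$ once $\supp\eta$ is small and $|y|\lesssim 1$. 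You only allowed $y_3$ to range over an interval of length $\delta$ because you fixed the scale $t$ rather than letting it compensate $y_3$. With the correct count, the naive slab test is not vacuous — it gives $\lambda_\delta\lesssim\delta^{1/p}$ and hence $p\ge h$ — so the ``anisotropic rescaling'' remedy you then sketch is aimed at a non-problem and is, in any case, only asserted rather than carried out.

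The deeper gap is that obtaining the \emph{strict} inequality $p>h$ is the whole point of Theorem~\ref{s1.3}, and your sketch has no handle on it. Your claim that the $\log(1/\delta)$ gain in $\lambda_\delta$ occurs when $d=h$ is attained on a compact edge is backwards: the logarithm comes from a \emph{vertex} of the Newton polyhedron, and for a compact edge one has only $\lambda_\delta\sim\delta^{1/h}$, so a single-slab test gives $p\ge h$ and nothing more. To get strictness one needs to sum slabs over dyadic scales (this is what Iosevich--Sawyer do), arriving at the integral condition $\int\eta(x)\,|\phi(x)|^{-1/p}dx<\infty$, and then argue that this integral diverges at $p=h$. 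That divergence needs a lower bound $\lambda_\delta\gtrsim\delta^{1/h}$ valid on a whole range of scales, and this can genuinely fail when the principal face of $\N(\phi^a)$ is an unbounded edge — exactly the difficulty the introduction flags via $\phi(x)=x_2^2+f(x_1)$ with $f$ flat, $h=2$. The input that rules this out is precisely Theorem~\ref{normalform1}: when $h<2$, the principal face of $\N(\phi^a)$ is necessarily a compact edge. Your argument never invokes the normal form theorem and so never touches the structural feature that actually distinguishes $h<2$ from $h\ge2$; without it, or an independent derivation of it, the proof cannot be closed.
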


Notice  an interesting difference between the cases $h(x^0,S)\ge 2$ and $h(x^0,S)<2:$ in the first case, studied in \cite{IKM-max}, the necessity of the condition $p>h(x^0,S)$ when $\rho(x^0)\ne 0$ could be verified for analytic hypersurfaces, but not for all classes  of smooth, finite type hypersurfaces (where the endpoint $p=h(x^0,S)$ remained open in certain situations). Indeed,  problems with the $L^p$-boundedness of $\M$ at the endpoint $p=h(x^0,S)$ may arise, e.g., for  $\phi$ of the form $\phi\x=x_2^2+f(x_1),$ where $f$ is flat at the origin (we refer to the examples in \cite{IKM-max}, Remark 12.3). Note, however, that in this example $h(\phi)=2,$  and we shall see  that such kind of situation can never arise when $h(x^0,S)<2.$

The case where only one principal curvature of $S$  vanishes, and the other one not (which is the case of singularities  of type $A_n$  in the sense of Arnol'd  - compare Theorem \ref{normalform1}) turns out to be the most difficult one to analyze, and   we shall return to this  problem in subsequent work.  Indeed, it appears that in those  situations where $\phi$ has a  singularity of type $A_n, n\ge 3,$  typically  the optimal necessary conditions for $L^p$-boundedness of $\M$ rather  seem to come from testing on characteristic functions of a very  narrow tubular neighborhood of some line  segment in $\RR^3.$ 
Here, we shall  only look at one instance of singularities of type $A_n,$ namely when $n=2$   (see Theorem \ref{nondegairypar}),  since results on surfaces with $A_2$ - type singularity turn out to be of great relevance also to  the study of $D_4^+$-type singularities, which are included in  Theorem \ref{s1.3}.

\medskip
Before turning to the proof of  Theorem \ref{s1.2}, let us first briefly recall some relevant notation and results from \cite{IKM-max} (compare also the monograph \cite{IMmon}). These sources should also be consulted for further details and references to the topic.  Our analysis in the present  paper will indeed take advantage of several  technics and results developed in those articles.

\setcounter{equation}{0}
\section{Some background on Newton diagrams and adapted coordinates}\label{newton}

\medskip
We first recall   some basic notions from \cite{IM-ada}, which essentially  go back to A.N.~ Var\-chen\-ko \cite{Va}.
Let $\phi$ be a smooth real-valued  function defined on an open  neighborhood $\Omega$ of the origin in $\bR^2$ with $\phi(0,0)=0,\, \nabla \phi(0,0)=0.$ 
Consider the associated Taylor series 
$$\phi(x_1,x_2)\sim\sum_{\al_1,\al_2=0}^\infty c_{\al_1,\al_2} x_1^{\al_1} x_2^{\al_2}$$
of $\phi$ centered at  the origin.
The set
$$\T(\phi):=\{(\al_1,\al_2)\in\bN^2: c_{\al_1,\al_2}=\frac 1{\al_1!\al_2!}\partial_{ 1}^{\al_1}\partial_{ 2}^{\al_2}� \phi(0,0)\ne 0\}      \index{T@$\T(\phi)$ (Taylor support)}
$$
will be called the {\it Taylor support}    of $\phi$ at $(0,0).$  We  shall always assume that the function $\phi$ is of {\it finite type} \index{finite type function} at every point, i.e., that the associated graph $S$ of $\phi$ is of finite type.  Since we are also assuming that $\phi(0,0)=0$ and $\nabla \phi(0,0)=0,$ the finite type assumption at the origin   just means that 
$$\T(\phi)  \ne \emptyset.$$ 
 The
{\it Newton polyhedron}   $\N(\phi)$ \index{N@$\N(\phi)$ (Newton polyhedron)} of $\phi$ at the origin is
defined to be the convex hull of the union of all the quadrants
$(\al_1,\al_2)+\bR^2_+$ in $\bR^2,$ with $(\al_1,\al_2)\in\T(\phi).$  
The associated {\it Newton diagram}  $\N_d(\phi)$ $\N(\phi)$ \index{Nd@$\N_d(\phi)$ (Newton diagram)} in the sense of Varchenko
\cite{Va}  is the union of all compact faces  of the Newton
polyhedron; here, by a {\it face,} \index{face (of a Newton polyhedron)}  we shall  mean an edge or a
vertex.

We shall use coordinates $(t_1,t_2)$ for points in the plane
containing the Newton polyhedron, in order to distinguish this plane
from the $(x_1,x_2)$ - plane.

The {\it Newton distance}  in the sense of
Varchenko, or shorter {\it distance}, $d=d(\phi)$ \index{d@$d(\phi)$ (Newton distance)}
between the Newton polyhedron and the origin  is given by the coordinate $d$ of the point $(d,d)$ at
which the bi-sectrix   $t_1=t_2$ intersects the boundary of the
Newton polyhedron.

The principal face  $\pi(\phi)$ \index{pi@$\pi(\phi)$ (principal face)}  of the Newton polyhedron of
$\phi$  is the face of minimal dimension  containing the point
$(d,d)$. We shall call the
series
$$
\phi_\pr(x_1,x_2):=\sum_{(\al_1,\al_2)\in \pi(\phi)}c_{\al_1,\al_2} x_1^{\al_1} x_2^{\al_2} \index{fipr@$\phi_\pr$ (principal part of $\phi$)}
$$
the {\it principal part}  of $\phi.$ In case that $\pi(\phi)$ is
compact,  $\phi_\pr$ is a mixed homogeneous polynomial; otherwise,
we shall  consider $\phi_\pr$ as a formal power series.

Note that the distance between the Newton polyhedron and the origin
depends on the chosen local coordinate system in which $\phi$ is
expressed.  By a  {\it local  coordinate system (at the origin)} \index{coordinates! local} we 
shall mean a smooth   coordinate system defined near the origin
which preserves $0.$ The {\it height}  of the  smooth function
$\phi$ is defined by
$$
h=h(\phi):=\sup\{d_y\},   \index{height@$h(\phi)$ (height of $\phi$)}
$$
where the
supremum  is taken over all local  coordinate systems $y=(y_1,y_2)$ at the origin, and where $d_y$
is the distance between the Newton polyhedron and the origin in the
coordinates  $y$.

A given   coordinate system $x$ is said to be
 {\it adapted} \index{coordinates!adapted} to $\phi$ if $h(\phi)=d_x.$
In \cite{IM-ada} we proved that one  can always find an adapted
local  coordinate system in two dimensions, thus  generalizing  the
fundamental work by Varchenko  \cite{Va} who worked in the   setting
of  real-analytic functions $\phi$ (see also \cite{PSS}).

Notice  that if the principal face of the Newton polyhedron $\N(\phi)$ is a compact edge, then it lies on a unique {\it principal line} \index{principal! line} 
$$
 L:=\{(t_1,t_2)\in \RR^2:\ka_1t_1+\ka_2 t_2=1\}, 
$$
with $\ka_1,\ka_2>0.$ By permuting the  coordinates $x_1$ and $x_2,$ if necessary, we shall always assume that $\ka_1\le\ka_2.$ The  weight $\ka=(\ka_1,\ka_2)$   \index{weight!kappa@$\ka$} will be called the {\it principal weight} \index{principal! weight} associated to $\phi.$  It induces dilations $\de_r\x:=(r^{\ka_1}x_1,r^{\ka_2} x_2),\ r>0,$ on $\RR^2,$ so that the principal part $\phi_\pr$ of $\phi$ is $\ka$-homogeneous of degree one with respect to these dilations, i.e.,  $\phi_\pr(\de_r\x)=r\phi_\pr\x$ for every $r>0,$ and we find that 
\begin{equation}\nonumber
d=\frac 1{\ka_1+\ka_2}=\frac1{|\ka|}.
\end{equation}
 Notice that $1/|\ka|\le h.$ 
It can then easily be shown (cf. Proposition 2.2 in \cite{IM-ada}) that $\phi_\pr$ can be factorized as 
\begin{equation}\label{2.4}
\phi_\pr\x=c x_1^{\nu_1}x_2^{\nu_2}\prod_{l=1}^M(x_2^q-\la_l
x_1^p)^{n_l},
\end{equation}
with $M\ge 1,$ distinct non-trivial ``roots'' $\la_l\in \bC\setminus\{0\}$ of multiplicities $n_l\in \bN\setminus\{0\},$ and trivial roots of multiplicities $\nu_1,\nu_2\in\bN$ at the coordinate axes.
Here,  $p$ and $q$ are positive integers without  common divisor, and  $\ka_2/\ka_1=p/q.$

\medskip
More generally,  assume  that $\ka=(\ka_1,\ka_2)$ is any weight with $0<\ka_1\le \ka_2$ such that  the line  $L_\ka:=\{(t_1,t_2)\in\bR^2:\ka_1 t_1+\ka_2t_2=1\}$ is a supporting line to the Newton polyhedron $\N(\phi) $ of $\phi$ (recall that a {\it supporting line}  \index{supporting line} to a convex set  $K$ in the plane  is a line such that $K$ is contained in one of the two closed half-planes into which the line divides the plane and such that this line  intersects the boundary of $K$). Then $L_\ka\cap \N(\phi)$ is  a face   of $\N(\phi),$ i.e., either  a compact  edge or a vertex,  and  the   {\it $\ka$-principal part}  of $\phi$
$$
\phi_\ka(x_1,x_2):=\sum_{(\al_1,\al_2)\in L_\ka} c_{\al_1,\al_2} x_1^{\al_1} x_2^{\al_2} \index{fikappa@$\phi_\ka$ ($\ka$-principal part of $\phi$)}
$$
 is a non-trivial polynomial  which is $\ka$-homogeneous of degree $1$ with respect to the dilations associated to this weight as before,  which can be factorized in a similar way as in \eqref{2.4}.  By definition, we then have
\begin{equation}\nonumber 
\phi(x_1,x_2)=\phi_\ka(x_1,x_2) +\ \mbox{terms of higher $\ka$-degree}.
\end{equation}

Adaptedness of a given coordinate system can be verified by means of the following proposition (see  \cite{IM-ada}): 
\smallskip

If $P$ is any  given polynomial which is  $\ka$-homogeneous  of degree one (such as $P=\phi_\pr$), then we  denote  by
\begin{equation}\label{mroots}
n(P):=\ord_{S^1} P   \index{multp@$n(P)$ (maximal order of roots of $P$)}
\end{equation}
the maximal order of vanishing of  $P$   along the unit circle $S^1.$  Observe that by homogeneity, the Taylor support $\T(P)$ of $P$ is contained in the face  $L_\ka\cap \N(P)$ of  $ \N(P).$ We therefore define the {\it homogeneous distance}  of $P$  by 
$d_h(P):= 1/{(\ka_1+\ka_2)}=1/|\ka|.$    \index{dh@$d_h(P)$ (homogeneous distance of $P$)} Notice that $(d_h(P),d_h(P))$ is just the point of intersection of the line $L_\ka$  with the bi-sectrix $t_1=t_2,$ and that $d_h(P)=d(P)$ if and only if $L_\ka\cap \N(P)$ intersects the bi-sectrix.
We remark that the height of $P$ can then easily be  computed by means of the  formula \begin{equation}\label{heightp+}
h(P)=\max\{n(P), d_h(P)\}
\end{equation}
(see Corollary 3.4 in  \cite{IM-ada}). Moreover, in  \cite{IM-ada} (Corollary 4.3 and  Corollary 5.3), we also proved the following characterization of adaptedness of a given coordinate system:
 \begin{proposition}\label{adapted}
The coordinates $x$ are adapted to $\phi$ if and only if one of the following conditions is satisfied:
\medskip

\bee
\item[(a)]  The principal face  $\pi(\phi)$ of the Newton polyhedron  is a compact edge, and $n(\phi_\pr)\le d(\phi).$
\item[(b)] $\pi(\phi)$  is a vertex.
\item[(c)] $\pi(\phi)$ is an unbounded edge.
\ee
\end{proposition}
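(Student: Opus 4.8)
The plan is to characterize adaptedness through coordinate changes. Since $h(\phi)=\sup_y d_y$ over all local coordinate systems $y$, one always has $h(\phi)\ge d_x(\phi)$, so the coordinates $x$ are adapted precisely when no local coordinate change strictly increases the Newton distance. I would first reduce, as in \cite{IM-ada}, to changes of Varchenko type, $y_1=x_1$, $y_2=x_2-\psi(x_1)$ with $\psi$ smooth and $\psi(0)=0$ (a nonzero linear term of $\psi$ being allowed, corresponding to a shear), after permuting $x_1$ and $x_2$ if necessary so that $\ka_1\le\ka_2$ and the root curves of the relevant quasihomogeneous parts are graphs over $x_1$. Writing $\phi^{(\psi)}(x_1,x_2):=\phi(x_1,x_2+\psi(x_1))$, the task becomes to decide when $d(\phi^{(\psi)})\le d_x$ for every admissible $\psi$.

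For the implication ``(a), (b) or (c) $\Rightarrow$ adapted'' I would split according to the shape of the principal face. If $\pi(\phi)$ is an unbounded edge (case (c)), then after permuting coordinates $\phi$ is divisible by $x_1^{d_x}$; this divisibility survives every Varchenko change, so $\N(\phi^{(\psi)})$ stays in the half-plane $\{t_1\ge d_x\}$ and therefore $d(\phi^{(\psi)})\le d_x$. In cases (a) and (b), put $d:=d(\phi)$ and use formula \eqref{heightp+} together with the factorization \eqref{2.4} to show that every quasihomogeneous ``model'' of $\phi$ that a Varchenko change can activate is \emph{rigid}, i.e. has height equal to $d$: in case (a) this is precisely the hypothesis $n(\phi_\pr)\le d=d_h(\phi_\pr)$, and in case (b) it is a combinatorial consequence of the principal vertex lying on the bisectrix, which forces the multiplicities of the roots of the relevant edge polynomials (read off from \eqref{2.4}) to be automatically $\le d$. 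By the very definition of height, a rigid model $P$ admits no coordinate change increasing its distance, so $d(P^{(\psi)})\le d$ for every $\psi$.

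The technical heart is then to pass from the models to $\phi$ itself, writing $\phi=\phi_\pr+R$ with $R$ of higher quasihomogeneous degree. I would classify the change by its leading exponent $a$, where $\psi(x_1)=bx_1^a+\cdots$, compared with $p/q=\ka_2/\ka_1$ (in case (b), with the two exponents delimiting the cone of weights supporting $\N(\phi)$ at the vertex). If $a>p/q$, the substitution only perturbs terms of higher $\ka$-degree, so the $\ka$-principal part of $\phi^{(\psi)}$ is still $\phi_\pr$, whose support flanks $(d,d)$ on $L_\ka$, and the bisectrix meets $\N(\phi^{(\psi)})$ at the same point $(d,d)$. If $a=p/q$ (possible only when $q=1$), the substitution replaces $\phi_\pr$ by another $\ka$-homogeneous polynomial with the same principal edge and the same value of $n(\cdot)$, again by \eqref{2.4}. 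If $a<p/q$, the polyhedron does pick up new points below $L_\ka$, but the quasihomogeneous degree accounting of the substitution, combined with $d(\phi_\pr^{(\psi)})\le d$, still prevents the first intersection of the bisectrix with $\N(\phi^{(\psi)})$ from exceeding $d$; the remaining terms of $\psi$ are absorbed by iteration. In every case $d(\phi^{(\psi)})\le d_x$, which gives adaptedness.

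For the converse I would argue by contraposition: suppose $x$ is adapted but $\pi(\phi)$ is a compact edge with $n(\phi_\pr)>d$. By \eqref{heightp+} and \eqref{2.4}, this inequality forces either a trivial root of $\phi_\pr$ at a coordinate axis of multiplicity $>d$ (handled after a permutation) or a root $x_2^q=\la_lx_1^p$ with $\la_l$ real, meeting $S^1$, of multiplicity $n_l>d$. Straightening this real root curve --- a single polynomial substitution when $q=1$, and finitely many steps of the Varchenko algorithm when $q>1$ --- produces coordinates whose Newton distance is $\ge n_l>d$, since the excess $n_l>d$ leaves enough room that the higher-order remainder $R$ is pushed beyond the new principal face; this contradicts adaptedness and completes the characterization. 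The main obstacle throughout is exactly this interplay between the straightening substitution and the higher-order remainder $R$: the quasihomogeneous degree accounting must be sharp enough both to rule out any gain when $n(\phi_\pr)\le d$ and to guarantee that a genuine gain survives when $n(\phi_\pr)>d$, and the case $q>1$ --- where no finite polynomial change straightens the root, so one must iterate while controlling termination through the finite-type hypothesis --- is where most of the work lies.
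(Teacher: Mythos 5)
The paper itself offers no proof here --- it simply cites Corollaries 4.3 and 5.3 of \cite{IM-ada} --- and your sketch follows the same strategy used in that reference: reduce to Varchenko-type shears, factor the quasihomogeneous edge polynomials as in \eqref{2.4}, use \eqref{heightp+}, and for the converse straighten a real root of multiplicity $>d$. So the approach matches the cited source at the level of ideas.

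There is, however, a concrete error in your case (c). Containment $\N(\phi^{(\psi)})\subset\{t_1\ge d_x\}$ does \emph{not} yield $d(\phi^{(\psi)})\le d_x$; it yields the reverse inequality $d(\phi^{(\psi)})\ge d_x$, since the bisectrix can only enter that half-plane at $t_1=t_2\ge d_x$. The fact you actually need is that $(d_x,d_x)$ \emph{belongs} to $\N(\phi^{(\psi)})$, which forces the bisectrix to hit the boundary at or before $(d_x,d_x)$. To get this you must observe that the vertex $(d_x,t_2^0)$ of $\N(\phi)$, with $t_2^0<d_x$ because $\pi(\phi)$ is an edge and not a vertex, persists in $\N(\phi^{(\psi)})$: since $\psi(0)=0$, the coefficient of $y_1^{d_x}y_2^j$ in $\phi^{(\psi)}(y)=y_1^{d_x}\,g(y_1,y_2+\psi(y_1))$ equals that of $x_1^{d_x}x_2^j$ in $\phi$, so the whole vertical edge at $t_1=d_x$ is unchanged. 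A second, related gap: divisibility by $x_1^{d_x}$ is preserved under $y_2=x_2-\psi(x_1)$ but \emph{not} under shears $y_1=x_1-\chi(x_2)$, so you must first permute coordinates so that the unbounded edge is vertical and then invoke the reduction lemma of \cite{IM-ada} to justify that only shears of the former type need be considered --- "survives every Varchenko change" is false without this restriction. Finally, your case (b) claim that the vertex on the bisectrix "forces the multiplicities $\le d$" is stated without justification; this is exactly the kind of step that the quasihomogeneous accounting in \cite{IM-ada} is designed to carry out, and you rightly flag that the bulk of the technical work (in particular the $q>1$ iteration in the converse) lies there.
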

 
We also note  that in case (a) we have $h(\phi)=h(\phi_\pr)=d_h(\phi_\pr).$  Moreover, it can be shown that we are in case (a)  whenever $\pi(\phi)$ is a compact edge  and $\ka_2/\ka_1\notin\NN;$   in this case we even have  $n(\phi_\pr)< d(\phi)$ (cf. \cite{IM-ada}, Corollary 2.3).

\setcounter{equation}{0}
 \section{Normal forms of $\phi$ under linear coordinate changes when  $h(\phi)<2,$\\
 and proof of Theorem \ref{s1.3}}\label{normalforms}
 
Our approach will be based on the description of  normal forms  of $\phi$ under linear coordinate changes given by the next theorem. The designation of the type of singularity that we list below corresponds to Arnol'd's classification of singularities  (cf. \cite{agv} and  \cite{duistermaat}). However, Arnol'd's normal forms are achieved by means of non-linear coordinate changes, and since we shall be in need of  very precise information on those  coordinate changes, we shall present normal forms which give such precise information (``non-linear shears'' will indeed always lead to adapted coordinates). Our normal forms  will  easily follow by expanding a bit on Proposition 2.11 in \cite{IMmon} in  combination with the proof of Corollary 7.4 in \cite{IKM-max}.

Notice  that in Theorem \ref{s1.2}, we are dealing with Case 2 of the following theorem.

\begin{thm}\label{normalform1}
Assume that $h(\phi)< 2,$ and that $\phi$ has a  degenerate critical point at the origin.   Then, after applying  a suitable linear change of coordinates, $\phi$ can be written  on a sufficiently small neighborhood of the origin in one of the following forms:

\medskip
\noi{\bf  Case 1  (Type A).} $\rank D^2\phi(0,0)=1.$

\begin{equation}\label{AD}
\phi(x_1,x_2)=b(x_1,x_2)(x_2-\psi(x_1))^2 +b_0(x_1),
\end{equation}
where $b,b_0$ and $\psi$ are  smooth functions, and  $b(0,0)\ne 0.$  

Moreover, either $\psi$ is flat at $0,$ or $\psi(x_1)=cx_1^{m}+O(x_1^{m+1})$, with $c\ne0$ and $m\ge 2,$ 
and 

\qquad $b_0(x_1)=x_1^n \beta(x_1),$ with \quad $\beta(0)\ne 0$ \ and \  $n\ge 3.$ \hfill (singularity of  type $A_{n-1})$
\medskip

The coordinates $\x$ are  then adapted to $\phi$ if and only if  $n\le 2m,$ with the understanding that $m:=\infty$ if $\psi$ is flat at the origin. If $n\le 2m,$ then the principal weight is given by $\ka:= ( 1/n,1/2),$ and we have 
$$
d(\phi)=h(\phi)=\frac {2n}{n+2},
$$
and if $n> 2m,$ then the principal weight is given by $\ka:= ( 1/(2m),1/2),$  and Newton distance and height are given by 
$$
d(\phi)=\frac {2m}{m+1}, \qquad h(\phi)=\frac {2n}{n+2}.
$$

\medskip
\noi {\bf Case 2.} $\rank D^2\phi(0,0)=0.$ Here, we distinguish two  subcases.
\smallskip

{\bf (i) Type D.} $\phi$ is still of the  form  \eqref{AD},
with smooth functions   $b,b_0$ and $\psi$ as  before, but  now with $b(0,0)= 0,$  and more precisely
\begin{equation} \label{D}
b\x=x_1 b_1\x+x_2^2 b_2(x_2),
\end{equation}
where $b_1$ and $b_2$ are smooth functions, with $b_1(0,0)\ne 0,$ and 

\smallskip
\qquad  $b_0(x_1)=x_1^n \beta(x_1),$ where $\beta(0)\ne 0$  and $n\ge 3.$ \hfill (singularity of  type $D_{n+1})$ 
 
 \medskip

Here, the coordinates $\x$ are  adapted to $\phi$ if and only if  $n\le 2m+1,$  with the understanding that $m:=\infty$ if $\psi$ is flat at the origin. If $n\le 2m+1,$ then the principal weight is given by $\ka:= ( 1/n,(n-1)/(2n)),$ and  Newton distance and height are given by 
$$
d(\phi)=h(\phi)=\frac {2n}{n+1},
$$
and if  $n> 2m+1,$ then  the principal weight is given by $\ka:= ( 1/(2m+1),m/(2m+1)),$ and  Newton distance and height are given by 
$$
d(\phi)=\frac {2m+1}{m+1}, \qquad h(\phi)=\frac {2n}{n+1}.
$$

\smallskip

{\bf (ii) Type E.}  $\phi$ can be written as 
$$
\phi(x)=\phi_\pr(x) +\phi_r(x),
$$
where the remainder $\phi_r(x)$ comprises all terms of $\ka$-degree strictly bigger than $1,$ and where we may assume that the  principal part $\phi_\pr$ of $\phi$ and the principal weight $\ka$ are from the  following list:

 \begin{itemize}
\item[] $\phi_\pr\x= x_2^3\pm x_1^4,$ \qquad with $\ka:=(1/4,1/3),$ \hfill (singularity of  type $E_6$)
\item[] $\phi_\pr\x= x_2^3+x_1^3x_2,$ \quad with $\ka:=(2/9,1/3),$ \hfill (singularity of  type $E_7$)
\item[] $\phi_\pr\x=x_2^3+x_1^5,$ \qquad with $\ka:=(1/5,1/3).$ \hfill (singularity of  type $E_8$)
 \end{itemize}
 In all these cases, the  coordinates $\x$ are  adapted to $\phi.$ Moreover,
$$d(\phi)=h(\phi)=
 \begin{cases}  \frac {12}7  & \mbox{for type }   E_6, \\
   \frac 95  & \mbox{for type }   E_7, \\  
    \frac {15}8  & \mbox{for type }   E_8. \\ 
\end{cases}
$$
\end{thm}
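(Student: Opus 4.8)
The plan is to prove Theorem \ref{normalform1} by splitting according to $r:=\rank D^2\phi(0,0)\in\{0,1\}$, and in each case proceeding in three steps: (i) a linear change of coordinates normalising the lowest homogeneous part of $\phi$ (the quadratic part if $r=1$, the cubic part if $r=0$); (ii) absorbing the remaining ``non-linear'' degrees of freedom into a shear $y_2=x_2-\psi(x_1)$, thus producing the forms \eqref{AD}, \eqref{D} (and, for type $E$, already finishing); and (iii) reading off the principal weight $\ka$, the Newton distance and the height from the resulting Newton polyhedron, deciding adaptedness by Proposition \ref{adapted} and the formula \eqref{heightp+}. Steps (i)--(ii) are essentially Proposition 2.11 of \cite{IMmon} combined with the computations in the proof of Corollary 7.4 of \cite{IKM-max}; what is new here is to keep the coordinate changes linear wherever possible, to record the shear $\psi$ precisely, and to perform the bookkeeping of step (iii). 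Throughout, the hypothesis $h(\phi)<2$ is the device that discards the higher singularity types (and the $h=2$ boundary cases).

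For Case 1 ($r=1$) the argument is straightforward: after a linear change we may assume $\partial_2^2\phi(0,0)\neq0$ and $\partial_1^2\phi(0,0)=\partial_1\partial_2\phi(0,0)=0$, so the implicit function theorem applied to $\partial_2\phi$ (legitimate since $\partial_2^2\phi(0,0)\neq0$) gives a smooth $\psi$ with $\partial_2\phi(x_1,\psi(x_1))\equiv0$, and Taylor's formula with integral remainder yields \eqref{AD} with $b(0,0)=\tfrac12\partial_2^2\phi(0,0)\neq0$ and $b_0(x_1)=\phi(x_1,\psi(x_1))$. Differentiating the defining relations gives $\psi(0)=\psi'(0)=0$ and $b_0(0)=b_0'(0)=b_0''(0)=0$, so $\psi$ is flat or vanishes to order $m\ge2$ and $b_0$ is flat or vanishes to order $n\ge3$; and $b_0$ cannot be flat, for otherwise in the shear coordinates $(x_1,y_2)$ the principal face of $\N(\phi)$ is the vertex $(0,2)$, forcing $h(\phi)=2$. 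One then looks at the monomials $(0,2),(m,1),(2m,0)$ coming from $b(x_2-\psi)^2$ and $(n,0)$ coming from $b_0$: if $n\le2m$ the principal face is the compact edge through $(0,2)$ and $(n,0)$, so $\ka=(1/n,1/2)$, $d=h=2n/(n+2)$, and since $\phi_\pr$ is not a perfect square ($\beta(0)\neq0$) one has $n(\phi_\pr)=1\le d$ and $\x$ is adapted; if $n>2m$ the principal face is the edge through $(0,2)$ and $(2m,0)$, $\phi_\pr=b(0,0)(x_2-cx_1^m)^2$ is a perfect square with $n(\phi_\pr)=2>d=2m/(m+1)$, so $\x$ is not adapted, while the shear coordinates are adapted and give $h(\phi)=2n/(n+2)$.

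In Case 2 ($r=0$), $\phi$ vanishes to order $\ge3$, so its cubic part $\phi_3$ is a nonzero binary cubic form (it cannot vanish, since a corank-$2$ point of vanishing order $\ge4$ already forces $d\ge2$ in every coordinate system, hence $h(\phi)\ge2$). If $\phi_3$ is \emph{not} a perfect cube it has a linear factor of multiplicity at most two, and a linear change brings it to $x_1(x_2^2+\varepsilon x_1^2)$ with $\varepsilon\in\{0,\pm1\}$; the Morse-type reduction of \cite{IMmon} then yields \eqref{AD} with $b$ of the form \eqref{D}, $b_1(0,0)\neq0$, and $b_0(x_1)=x_1^n\beta(x_1)$ with $\beta(0)\neq0$, $n\ge3$ (again $b_0$ cannot be flat, else $h(\phi)=2$), while $\psi=O(x_1^2)$ as before. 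The relevant monomials are now $(1,2),(m+1,1),(2m+1,0)$ and $(n,0)$, and exactly as in Case 1 one gets $\ka=(1/n,(n-1)/(2n))$, $d=h=2n/(n+1)$ with $\x$ adapted when $n\le2m+1$, and $\ka=(1/(2m+1),m/(2m+1))$, $d=(2m+1)/(m+1)$ with $\phi_\pr$ equal to $x_1$ times a perfect square (so not adapted, the shear coordinates giving $h(\phi)=2n/(n+1)$) when $n>2m+1$. If instead $\phi_3$ is a nonzero perfect cube, a linear change normalises it to $x_2^3$; then the $\ka$-principal part of $\phi$ with $\ka_2=1/3$ is controlled by the lowest powers of $x_1$ appearing, and the condition $h(\phi)<2$, i.e.\ $\ka_1>1/6$, forces the principal face to be the edge through $(0,3)$ and one of $(4,0),(3,1),(5,0)$; rescaling produces the three listed principal parts $x_2^3\pm x_1^4$, $x_2^3+x_1^3x_2$, $x_2^3+x_1^5$, whose only zeros on $S^1$ lie off the axes and are simple, so $n(\phi_\pr)=1\le d(\phi)$, $\x$ is adapted, and $d=h=12/7,\,9/5,\,15/8$ respectively.

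I expect the main obstacle to be the normal form \eqref{AD}--\eqref{D} in the rank-$0$ ``type $D$'' situation: there the coefficient of $x_2^2$ in $\phi$ vanishes at the origin, so $\psi$ cannot be produced by a naive application of the implicit function theorem to $\partial_2\phi$, and the elementary ``completion of the square'' fails because that coefficient depends on $x_2$. Establishing that $\psi$ depends smoothly on $x_1$ with $\psi=O(x_1^2)$, and that $b$ has precisely the shape \eqref{D} with $b_1(0,0)\neq0$, is where the parametrised Morse lemma of \cite{IMmon} (Proposition 2.11) and the analysis in \cite{IKM-max} do the real work; once that is in hand, all remaining assertions — the weights, Newton distances, heights, and the adaptedness dichotomies — reduce to the Newton-polyhedron computations sketched above, with $h(\phi)<2$ serving throughout to confine $\phi$ to the finitely many types listed.
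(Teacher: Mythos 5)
Your proposal is correct and follows essentially the same route as the paper: classify by $\rank D^2\phi(0,0)$ and then by the root multiplicity of the cubic part, obtain the normal forms \eqref{AD} and \eqref{D} from the parametrized Morse-type reduction of Proposition 2.11 in \cite{IMmon} (together with Corollary 7.4 of \cite{IKM-max} for the $E$-types), use $h(\phi)<2$ to exclude the flat cases $A_\infty$, $D_\infty$ and higher types, and read off weights, distance, height and adaptedness from the Newton polyhedron via Proposition \ref{adapted}. Your write-up merely makes explicit several steps the paper delegates to those references.
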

Notice that in all cases the principal face of the Newton polyhedron of $\phi$ is a compact edge, so that 
$d=d(\phi)=1/|\ka|.$

\begin{remark}\label{rootjet}{\rm 
Following \cite{IMmon}, we shall call the function $\psi$ (respectively its graph) in \eqref{AD} the {\it principal root jet} (compare \cite{IM-ada} for the  general  definition of this notion). Notice that the coordinates $(y_1,y_2):=(x_1,x_2-\psi(x_1))$ are adapted to $\phi$ for singularities of type $A$ or $D,$ and that in these coordinates $\phi$ is of the form
\begin{equation}\label{phinada}
\pad(y_1,y_2)=b^a(y_1,y_2)y_2^2 +b_0(y_1),
\end{equation}
where the function $b^a(y_1,y_2)$ has the same properties as the one described for $b(x_1,x_2).$  }
\end{remark}

\begin{remark}\label{D4}{\rm 
For later purposes, let us observe  that if $\phi$ is of type $D_4,$ then the coordinates $\x$ are already adapted,  and we may assume that the  principal part of $\phi$ is of the form
\begin{equation}\label{D4pm}
\phi_\pr\x= x_1x_2^2\pm x_1^3=x_1(x_2^2\pm x_1^2).
\end{equation}
If $\phi_\pr\x= x_1(x_2^2+ x_1^2),$ then  we say that $\phi$ is of type $D_4^+,$ and if $\phi_\pr\x= x_1(x_2^2- x_1^2),$ then we call $\phi$ of type $D_4^-.$}
\end{remark}

\begin{proof} The statements in Case 1 follow immediately from the proof of Proposition 2.11 in \cite{IMmon}. Notice that if the function $b_0$ were flat at the origin, then we would have $h(\phi)=2.$ Thus singularities of type $A_\infty$ are excluded here. Moreover, if we had $n=2,$ then the origin would be a non-degenerate critical point of $\phi,$  contrary to our assumption.

As for Case 2, following again the proof of Proposition 2.11 in \cite{IMmon}, let us  look at the polynomial $P_3,$ which denotes the homogeneous part of degree $3$ of the Taylor polynomial of $\phi$ with respect to the origin. Let us also denote by $n(P_3)$ the maximal multiplicity of real roots of $P_3.$ 

If $n(P_3)=2,$ then we can just follow the discussion in  \cite{IMmon}  in order to see that $\phi$ is of type $D_{n+1},$ with $n\ge 3$  (note that if $n=2,$ then we would be in  Case 2). As before, the type $D_\infty$ is excluded here, since in that case we would have  $h(\phi)=2.$

If $n(P_3)=1,$ then we have shown in \cite{IMmon} that $P_3$ must be of the form  $P_3\x=x_1(x_2-\alpha x_1)(x_2-\beta x_1),$ where either $\alpha\ne \beta$ are both real, or $\alpha=\overline{\beta}�$ are non-real. It is easy to see that this case can easily be reduced to the form described by \eqref{D4pm} by means of a linear change of coordinates.

Finally, if $n(P_3)=3,$ then as in the proof of Proposition 2.11 in \cite{IMmon} we may assume that $P_3\x= x_2^3.$ But then the proof of  Corollary 7.4 in \cite{IKM-max} shows that $\phi$ is of type $E_6, E_7$ or $E_8.$ 

The remaining statements are easily  verified using Proposition \ref{adapted}.
\end{proof} 

\medskip
Given these normal forms for $\phi,$ it is now easy to prove the necessary condition for $L^p$-boundedness of the maximal operator $\M$ in Theorem \ref{s1.3}. Indeed,  from our normal forms one deduces  that the principal face of the Newton polyhedron of $\phi,$ when expressed as $\pad$ in adapted coordinates, is a compact edge (compare \eqref{phinada} in Remark \ref{rootjet} for singularities of type $A$ and $D$). Then Theorem \ref{s1.3} is an immediate consequence of Proposition 12.1  in \cite{IKM-max} in combination with a result by  Iosevich and Sawyer \cite{iosevich-sawyer1}, namely  Theorem 1.5 in \cite{IKM-max}.

\medskip

In this paper, we shall mainly study maximal functions associated with   Case  2  in Theorem \ref{normalform1} where $D^2\phi(0,0)=0.$  However, as we shall see, the study of $D_4^+$ - type singularity will also require the understanding of maximal functions associated to surfaces with $A_2$ - type singularities.

\medskip 
In the next section, we shall provide an auxiliary estimate for maximal functions associated to families of hypersurfaces depending and some perturbation parameter $\si$ and some large  translation parameter $T$ (translations of the hypersurfaces in transversal directions).  The corresponding estimates will become useful in many situations.

\setcounter{equation}{0}
\section{Auxiliary estimates for maximal operators}\label{maxpar}

\subsection{An estimate for maximal operators depending on parameters}

Let us consider a smooth family $S^{\si,T}$ of hypersurfaces in $\RR^{n+1}$ given as graphs
\begin{equation}\label{firstfunc}
S^{\si,T}:=\{(x,T+\phi(x,\si)): x\in U\},
\end{equation}
where $\phi=\phi(x,\si)$ is  a smooth, real-valued function defined on an open neighborhood   $U\times V$ of a  given point $(x_0,\si_0)$ in $\RR^n\times \RR^m,$ and $T$ is a large real  translation parameter.
Denote  furthermore by
$\mu^{\si,T}$ the surface-carried measure on $S^{\si,T}$ defined by 
$$
\int f\, d\mu^{\si,T}:=\int f(x,T+\phi(x,\si))\, a(x,\si) dx,
$$
where $a(x,\sigma)$ is a nonnegative smooth function with compact support in $U\times V.$
For $t>0$ we denote by $\mu_t^{\si,T}$ the measure-preserving scaling of $\mu^{\si,T}$ given by 
$$
\int f\, d\mu_t^{\si,T}=\int f\big(tx,t(T+\phi(x,\si))\big)\, a(x,\si) dx,
$$ 
and consider the averaging operator
\begin{equation}\label{average}
A^{\si,T}_t f(y, y_{n+1}):=f*\mu_t^{\si,T}(y,y_{n+1})=
\int_{\bR^n} f\big(y-tx,y_{n+1}-t(T +\phi(x,\sigma))\big)\, a(x,\sigma)dx.
\end{equation}
 Define the
associated maximal operator by 
 \begin{equation}\label{max}
\M^{\si,T}f(y,y_{n+1}):=\sup_{t>0}| A^{\si,T}_t f(y,y_{n+1})|, \qquad (y,y_{n+1})\in \RR^{n+1}.
\end{equation}

\begin{prop}\label{simple}
Assume that the  uniform estimate
\begin{equation}\label{fdecay1}
\left|\int e^{i(\xi \cdot x+\xi_{n+1}\phi(x,\sigma))}\eta(x)dx\right|\le C
\frac{\|\eta\|_{C^{n+1}}}{(1+|\xi_{n+1}|)^{\gamma}}, \qquad (\xi,\xi_{n+1})\in\RR^{n+1},
\end{equation}
holds true  for every $C^{n+1}$- function $\eta$ with compact support in $U,$ where $C$ is independent of $\eta$ and 
$\si,$
and that  $n/2\ge \gamma>1/2.$ Then for every  $p>1+1/(2\gamma)$  and $|T|\ge 1$ the maximal
operator $ \M^{\si,T}$ is  bounded on $L^p(\RR^{n+1}),$ with norm 
 $$
 \|\M^{\si,T}\|_{p\to p}\le C_p|T|^\frac1{p},
 $$
 where the constant $C_p$ is independent of $\si$ and $T.$
\end{prop}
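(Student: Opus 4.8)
The plan is to deduce the fixed-time $L^p$-bounds for $A_t^{\si,T}$ from the decay hypothesis \eqref{fdecay1}, and then to run a Sobolev-embedding / square-function argument in the dilation parameter $t$ to pass to the maximal operator, keeping careful track of the dependence on $T$. First I would record the kernel estimate: writing $\widehat{\mu^{\si,T}}(\xi,\xi_{n+1})=e^{-iT\xi_{n+1}}\int e^{-i(\xi\cdot x+\xi_{n+1}\phi(x,\si))}a(x,\si)\,dx$, the modulus is controlled by hypothesis \eqref{fdecay1} applied with $\eta=a(\cdot,\si)$, giving $|\widehat{\mu^{\si,T}}(\xi,\xi_{n+1})|\le C(1+|\xi_{n+1}|)^{-\gamma}$ uniformly in $\si$ and $T$, and the factor $e^{-iT\xi_{n+1}}$ of modulus one is irrelevant for $L^2$ purposes. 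By the measure-preserving scaling, $\widehat{\mu_t^{\si,T}}(\xi,\xi_{n+1})=\widehat{\mu^{\si,T}}(t\xi,t\xi_{n+1})$, so $|\widehat{\mu_t^{\si,T}}(\xi,\xi_{n+1})|\le C(1+t|\xi_{n+1}|)^{-\gamma}$.

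Next I would set up the standard maximal-operator bootstrap. Fix a Littlewood--Paley decomposition in the last frequency variable $\xi_{n+1}$, writing $1=\sum_{j\ge0}\chi_j(\xi_{n+1})$ with $\chi_j$ supported where $|\xi_{n+1}|\sim 2^j$ for $j\ge1$ and $|\xi_{n+1}|\lesssim1$ for $j=0$, and correspondingly split $\mu_t=\mu_t*\check\chi_j=:\mu_t^{(j)}$ (suppressing $\si,T$). For the low-frequency piece $j=0$ the operator is essentially an average against an $L^1$-normalised bump and $\sup_t|A_t^{(0)}f|$ is dominated by the Hardy--Littlewood maximal function, hence $L^p$-bounded for all $p>1$ with constant independent of $T$ --- here one uses that $a$ has fixed compact support so the scaled measures have total mass $O(1)$ and the translation by $tT$ in the last coordinate only shifts the support, not its size. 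For $j\ge1$ I would use the well-known inequality
\begin{equation}\nonumber
\Big\|\sup_{t>0}|A_t^{(j)}f|\Big\|_2^2\le 2\Big\|A_1^{(j)}f\Big\|_2\Big\|t\tfrac{d}{dt}A_t^{(j)}f\big|_{t=1}\Big\|_2
\end{equation}
combined with scaling: $t\,d/dt$ brings down an extra factor essentially $|t\xi_{n+1}|\sim 2^j$ on the Fourier side but only on the piece where $t|\xi_{n+1}|\sim2^j$, so one gains from the decay $(1+t|\xi_{n+1}|)^{-\gamma}$ and loses $2^j$, yielding $\|\sup_t|A_t^{(j)}f|\|_2\lesssim 2^{j(1/2-\gamma)}\|f\|_2$ (the sum over $j$ converges precisely because $\gamma>1/2$). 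One must check the $T$-dependence is harmless at $L^2$: it is, since $T$ enters only through the unimodular factor $e^{-itT\xi_{n+1}}$, which commutes with everything on the Fourier side. So the $L^2$ maximal bound is uniform in $T$.

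The only place $|T|$ enters is the $L^1$-type estimate needed to interpolate down below $L^2$. Here I would estimate each $\sup_t|A_t^{(j)}f|$ crudely on $L^1$: since $\check\chi_j$ has $L^1$-norm $O(1)$ uniformly in $j$ and $\mu_t^{\si,T}$ is a measure of mass $O(1)$ supported (for the relevant range of $t$, which one localises using homogeneity) in a ball of radius $O(t(1+|T|))$ around the origin, a pointwise bound gives $\|\sup_t|A_t^{(j)}f|\|_{1,\infty}\lesssim (1+|T|)\,\|f\|_1$, or with a dyadic-in-$t$ decomposition and summation $\lesssim 2^{\eps j}(1+|T|)\|f\|_1$ for any $\eps>0$. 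Interpolating this with the $L^2$ bound $2^{j(1/2-\gamma)}$ at exponent $\theta$ where $1/p=\theta+(1-\theta)/2$, i.e. $\theta=2/p-1$, gives geometric decay in $j$ as long as $\theta(1+\text{small})+(1-\theta)(1/2-\gamma)<0$, which rearranges to $p>1+1/(2\gamma)$, and the surviving power of $(1+|T|)$ is $(1+|T|)^{\theta}=(1+|T|)^{2/p-1}$. Since $|T|\ge1$ this is $\lesssim|T|^{1/p}$ for $p\le2$ (and for $p>2$ one already has the uniform $L^2$ bound plus Hardy--Littlewood, so the claimed $|T|^{1/p}$ is trivially true), completing the proof. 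The main obstacle I anticipate is bookkeeping: making the $t$-localisation precise so that the crude $L^1$ bound only costs $(1+|T|)$ rather than a larger power, and verifying that the $\|\cdot\|_{C^{n+1}}$-norm appearing in \eqref{fdecay1} is genuinely uniform in $\si$ when applied to the Littlewood--Paley pieces $\mu_t^{(j)}$ --- one wants to apply \eqref{fdecay1} to the fixed density $a(\cdot,\si)$ and move the $\check\chi_j$-convolution outside, which is legitimate since convolution in the base variables commutes with the oscillatory integral.
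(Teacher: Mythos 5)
Your overall architecture — Littlewood--Paley in $\xi_{n+1}$, a Sobolev-embedding / square-function argument in $t$ for the $L^2$ bound, a crude $L^1$ weak-type bound via Hardy--Littlewood domination, and Marcinkiewicz interpolation — is exactly the paper's approach. But both of your two key numerical estimates are incorrect, and in a way that matters.

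\textbf{The $L^2$ estimate is not uniform in $T$.} You assert that ``$T$ enters only through the unimodular factor $e^{-itT\xi_{n+1}}$, which commutes with everything on the Fourier side,'' and conclude $\|\sup_t|A_t^{(j)}f|\|_2\lesssim 2^{j(1/2-\gamma)}\|f\|_2$ with no $T$-dependence. This is false. The Sobolev argument requires you to control $\partial_t A_t^{(j)}f$, and $\partial_t\big(e^{-itT\xi_{n+1}}\big)=-iT\xi_{n+1}e^{-itT\xi_{n+1}}$ has modulus $|T||\xi_{n+1}|\sim |T|\,2^j$ on the support of the $j$-th piece. Unimodularity is preserved by multiplication, not by differentiation in $t$. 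The correct bounds are $|\widehat{\mu^l_t}|\lesssim 2^{-l\gamma}$ and $|\partial_t\widehat{\mu^l_t}|\lesssim 2^l|T|\,2^{-l\gamma}$, and the geometric mean yields $\|\M^l\|_{2\to2}\lesssim 2^{l(1/2-\gamma)}|T|^{1/2}$. The factor $|T|^{1/2}$ is essential: it is what interpolates to the claimed $|T|^{1/p}$.

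\textbf{The $L^1$ estimate cannot be made uniform (or nearly uniform) in $j$.} You claim $\|\sup_t|A_t^{(j)}f|\|_{1,\infty}\lesssim (1+|T|)\|f\|_1$, or at worst with a loss of $2^{\eps j}$ for small $\eps$, arguing that $\|\check\chi_j\|_1=O(1)$ and $\mu$ has mass $O(1)$. These facts give $\|\mu^{(j)}\|_1=O(1)$, but that is not what the Hardy--Littlewood domination requires: one needs a nonnegative radially decreasing (or anisotropic) majorant of $|\mu^{(j)}|$ in $L^1$, and $\mu^{(j)}=\mu*\check\chi_j$ is an oscillatory density with $\|\mu^{(j)}\|_\infty\sim 2^j$. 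This $2^j$ cannot be avoided, and the correct bound is $\|\M^l\|_{L^1\to L^{1,\infty}}\lesssim 2^l|T|$ (as in the paper). Note that if your two claimed bounds were correct, Marcinkiewicz interpolation would produce summability in $j$ for \emph{all} $p>1$ with operator norm $\lesssim |T|^{2/p-1}$, a strictly stronger conclusion than the proposition; the fact that the range $p>1+1/(2\gamma)$ did not emerge naturally from your balance of estimates (it appears only because you inserted a ``$1+\text{small}$'' by hand at the last moment, contradicting your stated $L^1$ bound) is a signal that the two endpoint estimates are not right. The correct balance is: $L^2$ bound $2^{l(1/2-\gamma)}|T|^{1/2}$ versus $L^1$ weak-type bound $2^l|T|$, and Marcinkiewicz gives $\|\M^l\|_{p\to p}\lesssim 2^{l(1/p-2\gamma(1-1/p))}|T|^{1/p}$, which sums in $l$ precisely when $p>1+1/(2\gamma)$.
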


\begin{proof} Since for the proof we can essentially follow a by now well-known pattern (see, e.g., \cite{stein-book}, Ch. XI. 3), we shall only sketch the argument.  Let us assume without loss of generality that $(x_0,\si_0)=(0,0).$ Moreover, in order to facilitate the notation, we shall drop superscripts $\si,T$ and write  $\mu$ for $\mu^{\si,T},$ etc..

We choose  smooth non-negative bump functions $\chi_0$ supported in $[-1,1]$ and $\chi_1$ supported in $[-2,-1/2]\cup [1/2,2]$ such that
$$
\sum_{l=0}^\infty \chi_l(s)=1 \quad \mbox{for all }\quad s\in \RR,
$$
where $\chi_l(x):=\chi_1(2^{1-l}x)$ for $l\ge 1.$ Then we perform the inhomogeneous Littlewood-Paley decomposition 
$$
\mu=\sum_{l=0}^\infty \mu^l, \qquad \mbox{where} \qquad\widehat{\mu^l}(\xi,\xi_{n+1}):=\chi_l(\xi_{n+1})\hat \mu(\xi,\xi_{n+1}),
$$
and denote by $\M^l$ the maximal operator associated to $\mu^l$ in place of $\mu,$ i.e., $\M^l f=\sup_{t>0} |A_t^lf|,$ with $A^l_t f(y, y_{n+1}):=f*\mu^l_t(y,y_{n+1}).$

We first estimate $\M^l$ on $L^2.$ 
By writing $t=t'2^{-j}$ with $t'\in[1,2[$ and $j\in \ZZ,$ it is easily seen that 
$$
\M^lf(y,y_{n+1})\le \Big(\sum_{j\in\ZZ} \sup_{t\in [1,2[}|A^l_{t2^{-j}} f(y, y_{n+1})|^2\Big)^{1/2},
$$
and since $\widehat{\mu^l}(t2^{-j}\,\cdot)$ is supported in the set where $|\xi_{n+1}|\sim 2^{l+j},$ we may replace $f$ on the right-hand side of this inequality by $\sum_{k=-2}^2\Delta _{l+j+k}f,$ with $\widehat{\Delta_mf}(\xi,\xi_{n+1}):=\chi_m(\xi_{n+1})\hat f(\xi,\xi_{n+1}).$ Since the functions $\Delta_mf$ have almost disjoint Fourier supports, we easily see by means of Plancherel's theorem that it suffices to prove an  estimate for the local maximal functions $\M^l_j f=\sup_{1\le t<2} |A^l_{t2^{-j}}f|,$  of the form
\begin{equation}\label{maxloc}
\|\M^l_j f\|_2\le C_l\|f\|_2,\qquad \mbox{with }\ C_l \ \mbox{independent of} \  j,
\end{equation}
 in order to derive a corresponding estimate $\|\M^l f\|_2\le C C_l\|f\|_2.$ But, notice that for $l\ge 1$
 $$
 \widehat{\mu^l}(t2^{-j}(\xi,\xi_{n+1}))=\chi_1(t2^{-j-l}\xi_{n+1})\int e^{it2^{-j}(\xi\cdot x+\xi_{n+1}(T+\phi(x,\si))} a(x,\si) \, dx.
 $$
Our assumption \eqref{fdecay1} thus easily implies that, for $1\le t<2,$
\begin{eqnarray*}
 |\widehat{\mu^l}(t2^{-j}(\xi,\xi_{n+1}))|\le C 2^{-l\ga},\\
  |\pa_t\widehat{\mu^l}(t2^{-j}(\xi,\xi_{n+1}))|\le C 2^{-l\ga}2^l|T|
\end{eqnarray*}
(observe here that if $|\xi|/|\xi_{n+1}|$ is sufficiently large, then iterated integrations by parts lead to  the stronger estimate $\left|\int e^{i(\xi \cdot x+\xi_{n+1}\phi(x,\sigma))}\psi(x)dx\right|\le C\frac{\|\psi\|_{C^n}}{(1+|\xi|)^n}.$)
By means of  a variant of the Sobolev embedding theorem (compare  \cite{stein-book}, Ch. XI. 3.2) and Plancherel's theorem we then find that the norm of $\M^l_j$ can be estimated by the geometric mean of the two right-hand sides of these estimates, so that we may choose $C_l=C 2^{-l\ga}2^{l/2}|T|^{1/2}$ in \eqref{maxloc}. Consequently,
\begin{equation}\label{Mk2}
\|\M^l\|_{2\to 2}\le C 2^{l/2}2^{-l\ga}|T|^{1/2}.
\end{equation}

As for the estimation of $\M_l$ on $L^1,$ observe that, except for some Schwartz tail,  $\mu^l$ is essentially supported in a  cuboid of dimensions comparable to 
$1\times \cdots\times 1\times |T|,$ and that $\|\mu^l\|_\infty \le C 2^l.$ This allows to dominate $\M^lf$ by $C2^l |T| M_{HL}(|f|),$ where $M_{HL}$ denotes the Hardy-Littlewood maximal operator.  Therefore we have the weak-type estimate
\begin{equation}\label{Mk1}
\|\M^l\|_{L^1\to L^{1,\infty}}\le C 2^{l}|T|.
\end{equation}
From these two  estimates \eqref{Mk2} and \eqref{Mk1} we obtain  by means of  Marcinkiewicz' interpolation theorem (see, e.g., \cite{grafakos}) that 
$$
\|\M^l\|_{L^p\to L^p}\le C_p 2^{l(\frac 1p -2\ga(1-\frac 1p))}|T|^{1/p}.
$$
Very similar arguments apply when $l=0.$ 
Thus, if $p>1+1/(2\ga),$  these estimates sum in $l$  and we arrive at the desired estimate $\|\M\|_{L^p\to L^p}\le C_p |T|^{1/p}$ for this range of $p$'s.
\end{proof}

\subsection{A variation on Hardy-Littlewood's maximal operator}
The following result, which may also be of independent interest,  will become relevant to our ''Airy-type'' analysis in Section \ref{ndairy}.

If $A$ is a bounded Lebesgue measurable subset of  $\RR^n,$  then  we denote by $\M_A$ the corresponding maximal operator 
$$
\M_A f(x):=\sup_{t>0} \int_A |f(x+ty)|\, dy, \qquad f\in L^1_{\rm loc}(\RR^n).
$$
In particular, if $A=B_1(0)$ is the Euclidean unit ball, then $\M_A$ is the Hardy-Littlewood maximal operator $\M_{\rm HL}.$

Denote further by $\pi:\RR^n\setminus\{0\}\to S^{n-1}$ the {\it spherical projection}  onto the unit sphere $S^{n-1},$  given by $\pi (x):= x/|x|,$     and by $|\pi(A)|$ the $n-1$-dimensional volume of this set with respect to the surface measure  on the sphere.

\begin{proposition}\label{maxproj}
Assume that  $A $ is an open subset of $\RR^n$   contained in the annulus $R\le |x|<2R,$ where $R>0.$  
Then, for $1<p\le \infty,$   we have that 
\begin{equation}\label{maxA1}
\|\M_A f\|_{L^p\to L^p}\le C_p R^n |\pi(A)|.
\end{equation}
Moreover, if $p=1,$ then we have that  $\|\M_A f\|_{L^1\to L^{1,\infty}}\ge c_1 R^n|\pi(A)|.$
Here, $c_1>0$ and $C_p$ are positive constants  which are independent of the set $A.$ 
\end{proposition}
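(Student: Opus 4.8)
The plan is to establish the upper bound \eqref{maxA1} via a pointwise domination of $\M_A$ by an angular average of one-dimensional directional maximal operators, and to establish the lower bound at $p=1$ by testing $\M_A$ on an approximate identity concentrated at the origin.

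For the upper bound I would set $g:=|f|$, note that $\M_A f=\M_A g$, and pass to polar coordinates $y=r\omega$, $r>0$, $\omega\in S^{n-1}$, in the integral defining $A_t$. Since $A\subset\{R\le|y|<2R\}$ and $r\omega\in A$ forces $\omega=r\omega/|r\omega|\in\pi(A)$, one obtains for every $t>0$
$$
\int_A g(x+ty)\,dy\le (2R)^{n-1}\int_{\pi(A)}\int_0^{2R}g(x+tr\omega)\,dr\,d\omega
=(2R)^{n-1}\int_{\pi(A)}\frac1t\int_0^{2Rt}g(x+s\omega)\,ds\,d\omega,
$$
and the inner $s$-integral is at most $2R\,M_\omega g(x)$, where $M_\omega g(x):=\sup_{L>0}L^{-1}\int_0^L g(x+s\omega)\,ds$ is the one-sided Hardy--Littlewood maximal operator in the direction $\omega$. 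Taking the supremum over $t$ yields the pointwise estimate $\M_A g(x)\le 2^nR^n\int_{\pi(A)}M_\omega g(x)\,d\omega$. From here, Minkowski's integral inequality together with the one-dimensional Hardy--Littlewood maximal theorem (applied on each line of direction $\omega$ and integrated by Fubini over the orthogonal hyperplane), which gives $\|M_\omega g\|_{L^p(\RR^n)}\le A_p\|g\|_{L^p(\RR^n)}$ for $1<p\le\infty$ with $A_p$ independent of $\omega$, leads to $\|\M_A f\|_p\le 2^nA_pR^n|\pi(A)|\,\|f\|_p$, which is \eqref{maxA1} with $C_p=2^nA_p$. (For $p=\infty$ one may alternatively just use $\M_A f\le|A|\,\|f\|_\infty$ and $|A|\le c_nR^n|\pi(A)|$, which is the same polar computation.)

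For the lower bound I would first rescale to reduce to $R=1$, which is legitimate since $\M_{A/R}=R^{-n}\M_A$ and $|\pi(A/R)|=|\pi(A)|$. Using that $A$ is open, I would fix $\eta>0$ so small that the inner set $A_\eta:=\{y\in A:\dist(y,\partial A)>\eta\}$ satisfies $|\pi(A_\eta)|\ge\tfrac12|\pi(A)|$ (possible because the open sets $A_\eta$ increase to $A$ as $\eta\downarrow0$, hence $\pi(A_\eta)$ increases to $\pi(A)$), and I would test on $f:=|B_\epsilon(0)|^{-1}\mathbf 1_{B_\epsilon(0)}$ with $\epsilon:=\eta/10$, so that $\|f\|_1=1$. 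For $x$ with $|x|\ge1$ and $\omega:=-x/|x|\in\pi(A_\eta)$, choosing $r\in[1,2)$ with $B_\eta(r\omega)\subset A$ and evaluating $A_t f$ at $t:=|x|/r$, one has $\{y:|x+ty|<\epsilon\}=B_{\epsilon/t}(-x/t)=B_{\epsilon r/|x|}(r\omega)$, a ball of radius $<2\epsilon<\eta$ and hence contained in $A$; therefore
$$
\M_A f(x)\ge|B_\epsilon|^{-1}\bigl|A\cap B_{\epsilon/t}(-x/t)\bigr|=|B_\epsilon|^{-1}|B_{\epsilon/t}|=t^{-n}=(r/|x|)^n\ge|x|^{-n}.
$$
Consequently $\{\M_A f>3^{-n}\}$ contains $\{x:1\le|x|<3,\ -x/|x|\in\pi(A_\eta)\}$, of Lebesgue measure $|\pi(A_\eta)|\,(3^n-1)/n\ge c_n|\pi(A)|$; multiplying by the level $3^{-n}$ gives $\|\M_A f\|_{L^{1,\infty}}\ge c_1|\pi(A)|$ with $c_1>0$ depending only on $n$, and since $\|f\|_1=1$ this is the asserted lower bound.

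The genuinely delicate point is the lower bound: the test function must be supported on a ball much smaller than the inner thickness $\eta$ of the open set $A$, so that the relevant dilate of $A$ actually engulfs a full $\epsilon/t$-ball around the point $-x/t$, and one must then check that the set of $x$ for which this works still captures a fixed fraction of the angular mass $|\pi(A)|$, so that the final constant $c_1$ is independent of $A$ even though the auxiliary parameters $\eta$ and $\epsilon$ do depend on $A$. The upper bound is, by contrast, comparatively routine: once the annular localization of $A$ is used to decouple the radial and angular integrations, it reduces to the classical one-dimensional maximal inequality.
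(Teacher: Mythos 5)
Your proof is correct, and the upper bound takes a genuinely different route from the paper's. The paper scales to $R=1$ and performs a Whitney decomposition of the open set $\pi(A)$: it covers $\pi(A)$ (and hence $A$) by a family of tubes $T_j$ of length $6$ and variable radii $\ve_j$ centred at the origin, with bounded overlap, then uses $\M_A\le\sum_j\M_{T_j}$ pointwise, compares each $\M_{T_j}$ after an anisotropic rescaling to the Hardy--Littlewood maximal operator (so that $\|\M_{T_j}\|_{L^p\to L^p}\lesssim|T_j|\sim|T_j\cap S^{n-1}|$), and sums using the bounded overlap. Your polar-coordinate argument is a continuous analogue that bypasses the covering machinery entirely: the annular localisation $R\le|y|<2R$ lets you decouple the radial and angular integrations in one line, giving the pointwise domination $\M_A f(x)\le 2^nR^n\int_{\pi(A)}M_\omega f(x)\,d\omega$ by directional one-sided maximal operators, after which Minkowski and the one-dimensional maximal theorem (with constant uniform in $\omega$ by rotation invariance and Fubini) finish the job. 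Both arguments implement the same geometric idea; yours avoids the bounded-overlap bookkeeping and is arguably cleaner, while the paper's tube version is closer to how the proposition gets used. For the lower bound your argument is essentially the same as the paper's (test on a normalised $\chi_{B_\ve(0)}$; for $x$ with $1\le|x|<3$ and $-x/|x|$ in the angular shadow of the $\eta$-inner core $A_\eta$ of $A$ one gets $\M_A f(x)\gtrsim|x|^{-n}$), but you spell out the details the paper leaves implicit — in particular the explicit containment $B_{\ve r/|x|}(r\omega)\subset B_\eta(r\omega)\subset A$ and the choice of $\eta$ so that $|\pi(A_\eta)|\ge\frac12|\pi(A)|$ is fixed once and for all, rather than sending $\ve\to0$ at the end; both versions are fine and yield a constant depending only on $n$.
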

\noi {\bf Remarks:}  The  estimate \eqref{maxA1} gets only sharp as $p\to 1.$ For example, if $A=S_\de, \de>0,$ is the $\de$-neighborhood of the sphere $S^{n-1},$ then the  boundedness of the spherical maximal operator on $L^p$ for $p>n/(n-1)$ implies that in this range of $p$'s, 
$\|\M_{S_\de} f\|_{L^p\to L^p}\le C \de,$ whereas $\pi(S_\de)=S^{n-1}.$

We do not know if for $p=1$  a weak-type estimate of the form $\|\M_A f\|_{L^1\to L^{1,\infty}}\le C_1 |\pi(A)|$ holds true. The  argument that we shall use in the proof  does not  allow to show this, since weak-type estimates only sum with a kind of logarithmic loss  (cf. \cite{SW69})

\begin{proof} Scaling by $1/R,$ we can easily reduce considerations to the case $R=1.$ 
By an  {\it $\ve$-tube} ($\ve>0)$ we shall mean in this proof  any tube  of length 6 and radius $\ve$ centered at the origin.  By a standard Whitney  decomposition of the open subset $\pi(A)$ of the sphere, we may find a sequence  $T_j$ of such tubes, where $T_j$ is an $\ve_j$-tube, such that  the $T_j$ cover the set $\pi(A)$ with bounded overlap. We can do this even in such a way that the $T_j$ also cover the set $A.$  Then 
$$\M_Af(x)\le \sum_j\M_{T_j} f(x).
$$
Moreover, a simple scaling argument allows to compare  the operators 
$\M_{T_j}$  with the  Hardy-Littlewood maximal operator, and we find that 
$\|\M_{T_j} f\|_{L^p\to L^p}\le C'_p |T_j|=C''_p\ve_j^{n-1}\le C'''_p |T_j\cap S^{n-1}|.$ 
This implies that $\|\M_{A} f\|_{L^p\to L^p}\le C_p |\pi(A)|.$ 
\medskip

To prove an inverse estimate for $p=1,$   let $\ve>0,$ and put $f:=\chi_{B_{\ve}(0)}.$ Denote  by $A_\ve$ the set of all points in $A$ whose $\ve$-neighborhood is also contained in $A.$ Then, for every point $a\in A_\ve,$ we see that $\M_A f(x)\ge C \ve^n$ on $1/8$'th of  the $\de$-tube 
passing through $-a.$ Therefore $\M_A f(x)\ge C \ve^n=C'\|f\|_1$ on a set of measure $\ge c|\pi(A_\ve)|,$ where $c>0$ is a fixed constant. This implies that  $\|\M_A f\|_{L^1\to L^{1,\infty}}\ge c_1 |\pi(A_\ve)|,$ for every $\ve>0.$ The asserted inverse estimate for $p=1$ follows, since $|\pi(A_\ve)|\to |\pi(A)|$ as $\ve\to 0.$
\end{proof}

\setcounter{equation}{0}
\section{Estimation of  the maximal operator $\M$ in the  presence of a linear coordinate system which is adapted to $\phi$}
\label{adaptedc}
We begin the proof of Theorem \ref{s1.2} with the discussion of the cases where the  coordinates $\x$  in Theorem \ref{normalform1} are adapted to $\phi,$  which strongly facilitates the arguments.  Recall that in these cases 
$$
d=\frac 1{|\ka|}=h.
$$

Following  the approach in Section 7 of \cite{IKM-max}, given the principal weight $\ka,$ we first perform a dyadic decomposition  by means of the  dilations 
$\de_{r}\x:=(r^{\ka_1}x_1,r^{\ka_2}x_2),$ $r>0,$  associated to $\ka.$ To this end, we choose  a
smooth non-negative function 
$\chi_1$ supported in the annulus $\A:=\{1\le|x|\le R\}$ satisfying
$$
\sum_{k=k_0}^\infty\chi_1(\de_{2^k}x)=1 \quad \mbox{for}\quad 0\ne x\in \Om.
$$
 Notice that by choosing $\Om$ small, we can choose $k_0\in\NN$ as large as we need.  
We then decompose the measure $\mu=\rho d\si,$ which is explicitly given by 
$$
\int f\, d\mu=\int f(x,1+\phi(x)) \eta(x) \,dx,
$$
with a smooth, non-negative  bump function $\eta\in C_0^\infty(\Om),$ accordingly as 
$$
\mu=\sum_{k=k_0}^\infty\tilde\mu_k,
$$
with 
$$
\int f\, d\tilde\mu_k=\int f(x,1+\phi(x))\, \eta(x)\chi_1(\de_{2^k}x) dx.
$$
It will then suffice to derive suitable $L^p$-estimates for the maximal operators $\sup_{t>0}|f*(\tilde\mu_k)_t|.$ 
Applying a straight-forward $L^p$-isometric re-scaling to them by means of the dilations $\de_{2^{-k}},$ we may assume that these are of the  form $2^{-|\ka| k}\M_k f,$ where 
$$
\M_kf(y,y_3):=\sup_{t>0}|f*(\mu_k)_t(y,y_3)|
$$
and 
$$
\int f\, d\mu_k:=\int f(x,2^k+\phi^k(x))\, \eta(\delta_{2^{-k}}x)\chi_1(x) dx.
$$
Here we have set 
$$\phi^k(x):=2^k\phi(\de_{2^{-k}}(x))=\phi_\pr(x)+2^k\phi_r(\de_{2^{-k}}(x)).$$ 
Notice that the perturbation  term $2^k\phi_r(\de_{2^{-k}}(\cdot))$ is of order
$O(2^{-\ve k})$  for some $\ve>0$ in any $C^M$-norm. To express this fact, we shall in the sequel again  use the short-hand notation
$2^k\phi_r(\de_{2^{-k}}(\cdot))=O(2^{-\ve k}).
$
To summarize, we shall then have
\begin{equation}\label{mtomk}
\|\M\|_{L^p\to L^p}\le \sum_{k= k_0}^\infty  2^{-|\ka| k}\|\M_k\|_{L^p\to L^p}.
\end{equation}

For the  estimation of $\M_k$  we shall invoke Proposition \ref{simple}, with $T:=2^k$ and $\si:=2^{-k}.$ We then have to estimate  oscillatory integrals of the form
$$
J(\xi,\xi_3):= \int e^{i(\xi \cdot x+\xi_{3}\phi^k(x))}\eta(x)dx,
$$
where $\eta$ is smooth with compact support in the annulus $\A$ on which $|x|\sim 1.$ As in the proof of Proposition \ref{simple} we may and shall assume that $|\xi|\lesssim |\xi_3|.$ This allows us to re-write 
$$
J(\xi,\xi_3)= \int e^{i|\xi_3|\phi(x,s,\si)}\eta(x)dx,
$$
where the  complete phase in this oscillatory integral is of the form
$$
\phi(x,s,\si)=\phi(x,\si)+s\cdot x,
$$
with   $s=\xi/|\xi_3|\in \RR^2$ satisfying $|s|\lesssim 1,$  and where $\phi(x,0)=\phi_\pr(x).$ 
\medskip

Now {\it assume} that we can  estimate $J(\xi,\xi_3)$ by  
\begin{equation}\label{je}
|J(\xi,\xi_3)|\lesssim \frac{\|\eta\|_{C^3}}{(1+|\xi_3|)^\ga}
\end{equation}
uniformly in $\si,$ with some $\ga$ such that
\begin{equation}\label{hga}
d\ge 1+1/{(2\ga)}.
\end{equation}
 Then Proposition \ref{simple}  implies that  
\begin{equation}\label{jest1}
2^{-|\ka|k}\|\M_k\|_{L^p\to L^p}\lesssim
2^{-|\ka|k}(2^k)^{\frac{1}{p}}=2^{-(\frac{1}{d}-\frac{1}{p})k}, 
\end{equation}
provided $p\ge d,$ and if $p>d,$ then the series in \eqref{mtomk} converges and we see that $\|\M\|_{L^p\to L^p}<\infty$  whenever $p>d.$  Thus, whenever we can verify
 \eqref{je} and \eqref{hga}, then we obtain the desired estimate in Theorem \ref{s1.2}.
\medskip

We begin with singularities of type $E.$

\medskip
  {\bf The case $E_6$}. Here $\phi_\pr\x= x_2^3\pm x_1^4$ and  $\ka:=(1/4,1/3),$ and we claim that estimate \eqref{je} holds with $\ga=3/4.$

To this end, fix $x^0$ in the support of $\eta,$ $s^0$ and $\si^0:=0.$  We want to estimate  the contribution of a small neighborhood  of $x^0$ to the integral $J(\xi,\xi_3),$  uniformly for all $(s,\si)$ in a small neighborhood of 
$(s^0,0).$ If the complete phase $\phi(x,s^0,\si^0)$ has no critical point at $x^0,$ then integrations by parts lead to even stronger estimates than required by \eqref{je}. Let us therefore assume that $x^0$ is a critical point.
Since $|x^0|\sim 1,$ at least one of the two coordinates of $x^0$ is of size $1.$ 
This shows that in order to estimate  the contribution of a small neighborhood  of $x^0$ to the integral $J(\xi,\xi_3),$ we may first apply the method of stationary phase in one of the two variables of integration, and subsequently van der Corput's lemma of order either $2,3$ or $4$ in the remaining variable,  leading to an estimate of order $O(|\xi_3|^{-1/2-1/4})$  for all $(s,\si)$ sufficiently close to $(s^0,\si^0),$ in the worst case scenario. By means of a partition of unity argument, this leads to \eqref{je}, with $\ga=3/4.$ 
Since here $d=12/7>5/3=1+1/(2\ga),$ we are done.
\medskip

  {\bf The case $E_8$}. Here, a very similar argument applies and we obtain the estimate \eqref{je} with $\ga=7/10.$ 
Since $d=15/8>12/7=1+1/(2\ga),$ we are again done.
\medskip

{\bf The case $E_7$}. In this case we have
$\phi_\pr=x_2^3+x_1^3x_2,$ and   we claim that here the estimate \eqref{je} holds with $\ga=5/6.$

Indeed,  $\pa_2^2\phi_\pr(x^0)= 6x^0_2$ and $\pa_1^2\phi_\pr(x^0)= 6x^0_1 x^0_2.$
Thus, if $x^0_2\ne 0,$  and if we  assume again that $x^0$ is a critical point of the complete phase,  then near $x^0$ we can again first apply the method of stationary phase in $x_2$  and then van der Corput's lemma  of order $3$ in $ x_1$  and arrive at the desired  estimate. And, if $x^0_2=0,$ then $x_1^0\ne 0,$ and one checks that the Hessian determinant of $\phi_\pr$ does not vanish at $x^0.$ Thus we can apply the method of  stationary phase to a small neighborhood of $x^0$ and  find that for this we may even choose $\ga=1$ in  \eqref{je}.  So, in all cases, \eqref{je} holds true with $\ga =5/6.$ 

 Since $d=9/5>8/5=1+1/(2\ga),$ we are again done.

\medskip

  {\bf The case $D_{n+1} (n\ge 3).$ }
We next turn to singularities of type $D_{n+1},$  assuming that the coordinates $\x$ are adapted to $\phi,$ i.e., that $3\le n\le 2m+1.$
The case where $n=2m+1$ is a bit more delicate, so let us first look at 
\smallskip

{\bf a) The case  $n\le 2m.$}
Assuming without loss of generality that $b_1(0,0)=1,$ then under the condition $n\le 2m$ the principal part of $\phi$ is given by
$$
\phi_\pr\x=x_1 x_2^2+\beta(0)x_1^n, \quad \mbox{with}\ \beta(0)\ne 0.
$$
We  may then argue in a very similar way as in case $E_7$ to see that  here $J(\xi,\xi_3)$ can again be estimated by \eqref{je} with $\ga=5/6.$ Since   for $n\ge 4$ we have $d=2n/(n+1)\ge 8/5,$  we are  done in the case $n\ge 4.$

\smallskip

{\bf b) The case  $n= 2m+1.$}  We claim that in this case $J(\xi,\xi_3)$ can  be estimated by \eqref{je} with $\ga=3/4.$ 

In order to prove this,  notice that we may assume without loss of generality that the principal part of $\phi$ is of the form
$$
\phi_\pr\x=x_1(x_2-cx_1^m)^2+\beta(0)x_1^{2m+1},  \quad \mbox{with}\   c, \beta(0)\ne 0.
$$
In order to estimate $J(\xi,\xi_3),$ let us fix as before $x^0, s^0$ and $\si^0:=0$ with $|x^0|\sim1$ and $|s^0|\lesssim 1.$  Assume also that $x^0$ is the critical point of the phase function $\phi(x,\,s^0,\,0)$. Then necessarily $|s^0|\sim1.$  Let us pass to new adapted coordinates $(y_1,y_2):= (x_1, x_2-cx_1^m)$  in the integral defining $J(\xi,\xi_3).$ In these coordinates, the complete phase for $\si=0$ is then given by $ \pad(y_1,y_2,s^0,0):=\phi(y_1,y_2+cy_1^m, s^0,0),$ i.e., by 

 $$
 \pad(y_1,y_2,s^0,0)=y_1y_2^2+\beta(0)y_1^{2m+1}+s_1^0y_1+s_2^0cy_1^m+s_2^0y_2.
 $$
$y^0$ will denote the  critical point of this phase corresponding to $x^0.$ Obviously,  we have $|y^0|\sim1$. We distinguish now two cases:

\smallskip

{\bf Case 1. $y_1^0=0.$ }Then  necessarily $y_2^0\neq0$ and $0=\pa_2\pad(y^0, s_2^0,\,0)=s_2^0.$
In this case   $\Hess (\pad)(y^0,s^0,0)=-4(y_2^0)^2\neq0$.  So, $y^0$ is a non-degenerate critical point and we can use  the method of stationary phase  in two variables to obtain  an estimate of order $O(|\xi_3|^{-1})$ for $J(\xi,\xi_3),$  which is  stronger than what we need.

\smallskip
{\bf Case 2. $y_1^0\neq0$.} Then, for given $y_1\ne 0 $ in a sufficiently small neighborhood of the point $y_1^0,$ the critical point of the phase with respect to the variable $y_2$  is given by $y_2^c(y_1):=-s_2^0/(2y_1).$   Clearly it is non-degenerate. Applying the method of  stationary phase method in  $y_2$ we  thus arrive at the new phase 
$$
\phi_1(y_1):= \pad(y_1,y_2^c(y_1),s^0,0)=\be(0)y_1^{2m+1}+s_1^0y_1+s_2^0cy_1^m-\frac{(s_2^0)^2}{4y_1}.
$$
Since the exponents of $y_1$ in this phase  are all different, namely $2m+1,1, m$ and $-1$  the equation
$$
\phi_1'(y_1)=0
$$
 can have at most a   root of multiplicity $3$ at the point $y_1^0,$ and thus van der Corput's estimate (more precisely its variant given by Lemma 2.1 in \cite{IMmon}) implies that the remaining one-dimensional  integral in $y_1$ admits a uniform estimation of order  $O(|\xi_3|^{-1/4}).$  In conclusion, by first applying the method of stationary phase in $y_2,$  and then this van der Corput type estimate in $y_1,$ we  see that  estimate \eqref{je} holds true, with $\ga:=1/2+1/4=3/4.$ 
  Since   for $n\ge 4$ we have $d=\frac{2m+1}{m+1} \ge5/3= 1+1/(2\ga),$  we are  done also  in this case.

\medskip 
There remains the case $n=3,$ i.e., the case of $D_4$ - type singularities. Notice that in this case the coordinates $\x$ are always adapted, since $n=3\le 5\le 2m+1.$   We may assume that 
$$
\phi_\pr\x=x_1 x_2^2\pm x_1^3, 
$$
i.e., that we have a $D_4^+$ or a $D_4^-$ - type singularity (compare Remark \ref{D4}). The case of a $D_4^-$- type singularity is easy, since in this case the Hessian determinant of $\phi_\pr$ is given by 
$\Hess(\phi_\pr)(x_1,x_2)=-12x_1^2-4x_2^2\neq0$ whenever $x\neq0$. Thus, on the annulus $D,$ there are only non-degenerate critical points of $\phi,$ and therefore the method of  stationary phase implies that estimate \eqref{je} holds with $\ga=1.$ Since  here $d=3/2=1+1/(2\ga),$we are again done.

 \smallskip
 
There remains the  $D_4^+$ case, in which the Hessian determinant 
$\Hess(\phi_\pr)(x_1,x_2)=12x_1^2-4x_2^2$  may vanish on the annulus.  This case will require a more refined analysis, which will be carried out in the last two sections Sections \ref{ndairy}, \ref{D4est}.

\begin{remark}\label{problemA}
{\rm If $\phi$ has a singularity of type $A_{n-1},$ with $3\le n\le 2m,$ so that the coordinates $\x$ are adapted to $\phi,$ then  a similar argument would show that estimate \eqref{je} holds with $\ga=1/2+1/n=1/d=1/h.$  But then the condition \eqref{hga} would  just mean that $h\ge 2,$   in contradiction to our assumption $h<2.$  This gives a first hint that the treatment of type $A$ singularities will require much finer methods.}
\end{remark}

\setcounter{equation}{0}
\section{Estimation  of  the maximal operator $\M$  when there is no linear adapted coordinate system}\label{nonadac}
Assuming that the coordinates $\x$ are not adapted to $\phi$ in Theorem \ref{normalform1} means that we  are dealing with singularities of type $D_{n+1}, n\ge 3,$  so that $\phi$ is of the form \eqref{AD}, with $b\x$ given by \eqref{D}, i.e.,
\begin{equation}\label{nonadapted}
\phi(x):=x_1(b_1(x_1,x_2)+x_2^2b_2(x_2))(x_2-x_1^m\omega(x_1))^2+x_1^n\beta(x_1),
\end{equation}
where $b_1,b_2,\beta$ are smooth functions with
$b_1(0,0)\neq0$, $\omega(0)\neq0$ and $\beta(0)\neq0$. 
Note also that $n\ge 2m+2$ and  $m\ge2,$  so that in particular $n\ge 6$ and hence  
\begin{equation}\label{hest}
h=\frac{2n}{n+1}\ge \frac{12}7.
\end{equation}

As in \cite{IKM-max}, the main problems  will  here arise from a sufficiently narrow 
neighborhood of the principal root jet, i.e.,  the curve $x_2=\psi(x_1)=x_1^m\om(x_1).$
More precisely, consider the function $\pad(y_1,y_2)$ in \eqref{phinada} of  Remark \ref{rootjet}, which describes $\phi$ in the adapted  coordinates $(y_1,y_2)=(x_1, x_2-\psi(x_1)).$ Then one easily sees that 
$$
\pad(y)=y_1(b^a_1(y_1,y_2)+y_2^2b^a_2(y_2))y_2^2+y_1^n\beta(y_1),
$$
where $b^a_1(0,0)=b_1(0,0)\ne 0\ne \be(0),$ and therefore  
the principal part of 
$\pad$ is given by 
\begin{equation}\label{padpr}
\pad_\pr\y=y_1b_1(0,0)y_2^2+y_1^n\be(0),
\end{equation}
which is $\ka^a$-homogeneous of degree one with respect to the weight 
$$
\ka^a:=\Big(\frac 1n,\frac{n-1}{2n}\Big).
$$
Notice also that 
$$
h(\phi)=\frac 1{|\ka^a|} \quad \mbox{and  }  a:=\ka^a_2/\ka^a_1=(n-1)/2>m.
$$
Following our approach from  \cite{IKM-max}, we therefore    decompose the domain $\Om$ into three regions, an ``exterior'' region of the form
\begin{equation}\label{Dext}
D_\ext :=\{x\in \Om: 
|x_2-\psi(x_1)|\ge  \ve |x_1^{m}|\},
\end{equation}
the ``principal region'' close to the principal root jet,  which is of the form
\begin{equation}\label{Dpr}
D_\pr :=\{x\in \Om: 
|x_2-\psi(x_1)|\le  N|x_1^{a}|\},
\end{equation}
and a ``transition region'' of the form
\begin{equation}\label{transr}
E :=\{x\in \Om: 
N|x_1^{a}|\le|x_2-\psi(x_1)|\le \ve|x_1^{m}|\},
\end{equation}
where $\ve>0$ is a sufficiently small and $N>0$ a sufficiently large parameter to be chosen later.  

Observe also  that the region $D_\ext$ is  essentially  invariant under the dilations $\de_r, r\ll 1,$ associated to the weight 
$\ka,$ whereas the region $D_\pr,$  when expressed in the adapted coordinates $\y,$ is  essentially  invariant under the dilations $\de^a_r, r\ll1,$ associated to the weight $\ka^a,$  i.e., the dilations defined by 
$\de^a_r\y:=(r^{\ka^a_1}y_1,r^{\ka^a_2}y_2).$
 \smallskip
 
For the localizations to such regions, here  and in the sequel, it will be useful to employ the following notation   from \cite{IKM-max}: if  $A_t$  is the averaging operator  from \eqref{At}, and if $\chi$ is any integrable ``cut-off'' function defined on $\Om,$ we shall denote by $A^\chi_t$ the correspondingly localized averaging operator 
$$
A_t^\chi f(y,y_3)=f*\mu^\chi_t(y,y_3)=\int_{\bR^2} f(y-tx, y_3-t(1+\phi(x)))\eta(x)\chi(x) \, dx
$$
corresponding to the measure $\mu^\chi:=(\chi\otimes 1)\mu,$ 
and by $\M^\chi$ the associated maximal operator
$$\M^\chi f(y,y_3):=\sup_{t>0}|A^\chi_tf(y,y_3)|.
$$

\subsection{The contribution by  the region away from the  principal root jet}\label{nearjetsec}
Here we may essentially proceed as in Section \ref{adaptedc}, choosing again for $\ka$ the principal weight $\ka:= ( 1/(2m+1),m/(2m+1)).$

In order to localize to  an  exterior region $D_\ext$ in a smooth fashion, we  fix a cut-off function $\rho\in C_0^\infty (\RR)$ supported in $[-2,2]$ and  identically $1$ on  $[-1,1],$  and put 
$$
\rho_1\x:=\rho\Big(\frac{ x_2-\om(0)x_1^m}{\ve x_1^{m}}\Big), \qquad \ve_1>0.
$$
Since $\psi(x_1)=\om(0)x_1^m+o(|x_1|^m),$  we could as well have chosen $\psi(x_1)$ in place of the leading term 
$\om(0)x_1^m$ of $\psi(x_1)$ in this definition, but the advantage of our choice is that  $1-\rho_1$ becomes  $\ka$-homogeneous of degree  zero. Clearly, $1-\rho_1$ is nevertheless  supported in a region of the form $D_\ext.$ 

\begin{prop}\label{maxoutroot}
If $p>d=d(\phi),$ and if the neighborhood $\Om$ of $(0,0)$ is chosen sufficiently small,  then the maximal operator $\M^{1-\rho_1}$
is bounded on $L^p$.
\end{prop}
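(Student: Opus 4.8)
The plan is to follow the scheme of Section~\ref{adaptedc}, the only difference being that $\ka$ is now the non-adapted principal weight $\ka=(1/(2m+1),m/(2m+1))$. Since $1-\rho_1$ is $\ka$-homogeneous of degree zero, the dyadic decomposition $\mu^{1-\rho_1}=\sum_{k\ge k_0}\tilde\mu_k$ with respect to the dilations associated to $\ka$ goes through as before, and rescaling the $k$-th piece by $\de_{2^{-k}}$ reduces matters, via \eqref{mtomk}, to the bound $\|\M_k\|_{L^p\to L^p}\lesssim 2^{k/p}$ for the rescaled maximal operators $\M_k$ attached to the surfaces $\{(x,2^k+\phi^k(x))\}$, with $\phi^k=\phi_\pr+O(2^{-\ve k})$. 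One reads off from \eqref{nonadapted} that every monomial of $\ka$-degree one sits on the factor $x_1(x_2-\om(0)x_1^m)^2$ — the term $x_1^n\beta(x_1)$ has $\ka$-degree $n/(2m+1)>1$ because $n\ge 2m+2$ — so $\phi_\pr\x=b_1(0,0)\,x_1(x_2-\om(0)x_1^m)^2$ with $b_1(0,0)\ne 0$, while the cut-off $1-\rho_1$ restricts the integration to $\{|x|\sim1,\ |x_2-\om(0)x_1^m|\gtrsim\ve|x_1|^m\}$, i.e.\ away from the double zero of $\phi_\pr$.

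By Proposition~\ref{simple} (with $T=2^k$, $\si=2^{-k}$) it then suffices to prove the oscillatory integral estimate \eqref{je} with some $\ga$ obeying \eqref{hga}; since $d=\frac{2m+1}{m+1}$, condition \eqref{hga} reads $\ga\ge\frac{m+1}{2m}$, and I will establish \eqref{je} with the uniform value $\ga=3/4$, which is admissible for every $m\ge 2$. The complete phase is $\Phi(x,s)=\phi^k(x)+s\cdot x$ with $|s|\lesssim 1$, and one estimates, uniformly in $s$ and $k\ge k_0$, the contribution of a small neighbourhood of an arbitrary point $x^0$ with $|x^0|\sim1$ in the exterior region. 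If $\Phi$ has no critical point there, integration by parts gives $O(|\xi_3|^{-N})$; if $x^0$ is a critical point, one distinguishes two cases. If $|x_1^0|\gtrsim1$, the equation $\pa_2\Phi(x^0)=0$ together with the exterior condition forces $|s_2|\gtrsim\ve$, and since $\pa_2^2\phi_\pr=2b_1(0,0)x_1\sim1$ one may apply the method of stationary phase in $x_2$; the reduced phase in $x_1$ is, to leading order, $\phi_1(x_1)=-\frac{s_2^2}{4b_1(0,0)x_1}+s_1x_1+s_2\om(0)x_1^m$ (plus smooth $O(2^{-\ve k})$ corrections), so that $x_1^2\phi_1'(x_1)=q(x_1):=\frac{s_2^2}{4b_1(0,0)}+s_1x_1^2+ms_2\om(0)x_1^{m+1}+O(2^{-\ve k})x_1^2$, and since $q'''(x_1)=m^2(m+1)(m-1)s_2\om(0)x_1^{m-2}+O(2^{-\ve k})$ is bounded away from zero for $|x_1|\sim1$ (here $|s_2|\gtrsim\ve$ and $m\ge2$ are used), $\phi_1'$ has roots of multiplicity at most $3$; the van der Corput type estimate (the variant of Lemma~2.1 in \cite{IMmon}) gives $O(|\xi_3|^{-1/4})$ for the remaining $x_1$-integral, hence $O(|\xi_3|^{-3/4})$ altogether. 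If $|x_1^0|$ is small, then $|x_2^0|\sim1$ and the Hessian of $\phi_\pr$ is non-degenerate at $x^0$, because its off-diagonal entry $\pa_1\pa_2\phi_\pr=2b_1(0,0)(x_2-(m+1)\om(0)x_1^m)\approx2b_1(0,0)x_2^0$ stays bounded away from zero; two-dimensional stationary phase then yields the stronger bound $O(|\xi_3|^{-1})$. A partition of unity over the $\ka$-annulus gives \eqref{je} with $\ga=3/4$, and, exactly as in the derivation of \eqref{jest1}, the series in \eqref{mtomk} converges for every $p>d$, which is the assertion. Taking $\Om$ small forces $k_0$ large, so that the $O(2^{-\ve k})$ perturbations are harmless throughout.

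The main obstacle is the estimate \eqref{je}, and within it the case $|x_1^0|\gtrsim1$: one has to control the multiplicity of the critical points of the reduced one-dimensional phase $\phi_1$, which is exactly where the three-term structure of $q$ and the lower bound $|s_2|\gtrsim\ve$ valid on the exterior region enter. The case $|x_1^0|$ small is comparatively soft, since there the Hessian of $\phi_\pr$ remains non-degenerate. A secondary technical point is to check that the $O(2^{-\ve k})$ perturbation of $\phi_\pr$ spoils neither the multiplicity bound for $\phi_1'$ nor the non-degeneracy of the Hessian; this follows from the uniform-in-$k$ lower bounds indicated above once $k_0$ is large enough.
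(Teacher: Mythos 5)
Your proof follows the same overall strategy as the paper's: dyadic decomposition with respect to the $\ka$-dilations, rescaling, and an application of Proposition~\ref{simple}, which reduces matters to the uniform decay estimate \eqref{je} for the oscillatory integral with phase $\phi^k+s\cdot x$ on the exterior annulus. Where you diverge is in how that estimate is obtained. The paper simply observes that this is exactly the oscillatory integral treated in Chapter~3 of \cite{IMmon} for type~$D$ singularities (an Airy-type analysis), and imports the stronger decay rate $\ga=5/6$ from there, noting $d=(2m+1)/(m+1)\ge 5/3>8/5=1+1/(2\cdot\frac56)$ for $m\ge2$. You instead give a self-contained argument: stationary phase in $x_2$ when $|x_1^0|\gtrsim1$ (using $\pa_2^2\phi_\pr=2b_1(0,0)x_1$), followed by van der Corput of order four on the reduced one-variable phase, where the lower bound $|s_2|\gtrsim\ve$ enforced by the exterior cut-off $1-\rho_1$ is what keeps $q'''$ nonvanishing and hence caps the multiplicity of the critical points of $\phi_1'$ at three; two-dimensional stationary phase handles the complementary region $|x_1^0|$ small via nondegeneracy of the mixed Hessian entry. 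This yields only $\ga=3/4$, which is lossier but still closes, since $d\ge 5/3=1+1/(2\cdot\frac34)$ with equality at $m=2$, and Proposition~\ref{simple} needs only $p>1+1/(2\ga)$, which follows from $p>d$. In short, you trade the citation to \cite{IMmon} for an elementary argument that happens to be exactly sharp in the borderline case $m=2$ — a legitimate variant, and arguably clearer about exactly where the exterior localization enters.
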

Since $h\ge d,$ this result  implies in particular the $L^p$-boundedness of $\M^{1-\rho_1}$ when $p>h.$ 

\begin{proof}
We can proceed exactly as in Section \ref{adaptedc}, with the measure $\mu$ replaced by $\mu^{1-\rho_1};$  for $\ka$ we still choose the principal weight. The re-scaled measures $\mu_k$ are  then now given by 

$$
\int f\, d\mu_k:=\int f(x,2^k+\phi^k(x))\, \eta(\delta_{2^{-k}}x)\chi_1(x)(1-\rho)\Big(\frac{ x_2-\omega(0)x_1^m}{\varepsilon x_1^{m}}\Big)\,dx,
$$ 
and we have to estimate  oscillatory integrals of the form
$$
J(\xi,\xi_3):= \int e^{i(\xi \cdot x+\xi_{3}\phi^k(x))}\eta(x)(1- \rho)\Big(\frac{ x_2-\om(0)x_1^m}{\ve x_1^{m}}\Big)\,dx,
$$
where $\eta$ is smooth with compact support in the annulus $\A.$ Therefore  the amplitude in this oscillatory integral  $J(\xi,\xi_3)$ is  supported in the intersection of the annulus $\A,$ on which $|x|\sim 1,$ and the  region given by \eqref{Dext}. This is exactly the kind of oscillatory integral  (whose phase has at worst an Airy type $A_2$  singularity)  that we had estimated in our discussion of singularities of type $D$ in  Chapter 3 of \cite{IMmon} (compare pp. 53--54), where we had shown that $J(\xi,\xi_3)=O((1+|\xi_3|)^{-5/6}),$ uniformly  in $k$ for $k$ sufficiently large. This means that we may  choose 
$\ga:=5/6$ in \eqref{je}.

But, since $m\ge 2,$ we have  $d=(2m+1)/(m+1)\ge 5/3>8/5= 1+1/(2\ga),$ and thus we see that $\M$ is $L^p$-bounded whenever $p>d.$
\end{proof}

\subsection{The contribution by the  transition domain}\label{transdom}

 In order to localize to  the transition domain in a smooth fashion, we put 
$$
\tau(x):=\rho\Big(\frac{ x_2-\psi(x_1)}{\ve x_1^{m}}\Big)
(1-\rho)\Big(\frac{ x_2-\psi(x_1)}{Nx_1^{a}}\Big).
$$
Then clearly $\tau$ is supported in a region of the form $E.$ By means of the change to  the adapted  coordinates 
$(y_1,y_2)=(x_1, x_2-\psi(x_1)),$ we then see that the averaging operator $A^\tau_t$ can be written as  
 $$
A^{\tau}_tf(z,z_3)=\int_{\bR^2}
f\big(z_1-ty_1,z_2-t(y_2+\psi(y_1)),
z_3-t(1+\pad(y))\big)\tau^a (y) \,  \eta^a(y)\, dy,
$$
where
$$
\tau^a(y)=\rho\Big(\frac{ y_2}{\ve  y_1^m}\Big)\, 
(1-\rho)\Big(\frac{ y_2}{Ny_1^{a}}\Big),
$$
and $\eta^a$ is a smooth function supported in a sufficiently small neighborhood of the origin.

\begin{prop}\label{maxtrans}
If $p>h=h(\phi),$ and if the neighborhood $\Om$ of $(0,0)$ is chosen sufficiently small,  then the maximal operator $\M^{\tau}$
is bounded on $L^p$.
\end{prop}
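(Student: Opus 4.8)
The plan is to reduce to the model situation of an $A_2$-type (Airy) singularity after rescaling along the weight $\ka^a$, and then invoke the auxiliary results of Section~\ref{maxpar}. In the transition region $E$, written in the adapted coordinates $(y_1,y_2)=(x_1,x_2-\psi(x_1))$, we have $\pad_\pr\y = y_1b_1(0,0)y_2^2 + y_1^n\be(0)$ with $|y_2|\ge N|y_1|^a$, so the quadratic term $y_1 y_2^2$ dominates the perturbation $y_1^n$ once $N$ is large; thus $\pad$ behaves like a one-variable $y_1$-dependent quadratic in $y_2$ with a nondegenerate $y_2$-Hessian on the relevant scales. First I would perform a further dyadic decomposition of $E$ with respect to $y_1$, writing $y_1\sim 2^{-j}$, and on each such piece apply the anisotropic dilation $\de^a_{2^{-j}}$ (equivalently, a mixed-homogeneous rescaling adapted to $\ka^a$), which is an $L^p$-isometry up to the Jacobian factor $2^{-|\ka^a|j}$. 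On the rescaled piece the phase becomes $\pad_\pr$ plus an error that is $O(2^{-\ve j})$ in every $C^M$-norm, and the cutoff $\tau^a$ localizes to the region $N\le |y_2|\le \ve' 2^{(a-m)j}$, which is a fixed annular-type region on which $|y_2|\sim$ (a large parameter, possibly growing with $j$, but with bounded overlap after a secondary dyadic decomposition in $y_2$).

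Next I would set up Proposition~\ref{simple} with $T:=2^j$ (the translation parameter coming from the $1$ in $1+\phi$, rescaled) and $\si:=2^{-j}$ as the perturbation parameter, exactly as in Section~\ref{adaptedc}. The hypothesis needed is the uniform oscillatory integral decay \eqref{fdecay1}, i.e.\ an estimate of the form $|J(\xi,\xi_3)|\lesssim \|\eta\|_{C^3}(1+|\xi_3|)^{-\ga}$ with $\ga$ satisfying $h\ge 1+1/(2\ga)$. Here is where the structure of $E$ pays off: since on (the rescaled) $E$ we have $|y_2|\gtrsim 1$, the phase has a nondegenerate critical point in the $y_2$-variable (the $y_2$-Hessian is $\sim y_1 \ne 0$ and comparable to $1$ after rescaling, since $y_1\sim 1$ on the piece), so stationary phase in $y_2$ gains $|\xi_3|^{-1/2}$ and produces a reduced one-variable phase in $y_1$ of the form $\be(0)y_1^n + \text{(lower-order monomials)} - (\text{const})/y_1$, whose distinct exponents force the derivative to vanish to order at most two; van der Corput (Lemma~2.1 in \cite{IMmon}) then gains a further $|\xi_3|^{-1/3}$, giving $\ga = 1/2+1/3 = 5/6$. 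Since $h\ge 12/7 > 8/5 = 1+1/(2\cdot 5/6)$ by \eqref{hest}, condition \eqref{hga} holds with room to spare, so Proposition~\ref{simple} yields $\|\M^{\tau,j}\|_{p\to p}\lesssim 2^{j/p}$ on each piece.

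Finally I would sum the pieces: the rescaling contributes $2^{-|\ka^a|j} = 2^{-j/h}$, so each dyadic block of the transition maximal operator is bounded by $2^{-j/h}2^{j/p} = 2^{-(1/h-1/p)j}$, and this is summable in $j\ge j_0$ precisely when $p>h$. One must also sum the secondary decomposition in $y_2$ (the $|y_2|\sim 2^i$ pieces with $N\lesssim 2^i\lesssim 2^{(a-m)j}$); here the point is that after the $y_2$-rescaling each such piece again reduces to the same model with uniformly controlled constants, and the number of $i$'s is $O(j)$, a polynomial loss easily absorbed by the geometric decay $2^{-(1/h-1/p)j}$ — alternatively, one absorbs the $y_2$-scale into a second application of the dilations and checks the constants stay uniform. \textbf{The main obstacle} I anticipate is precisely bookkeeping the two-parameter ($j$ and the $y_2$-scale $i$) decomposition so that the constant $C$ in \eqref{fdecay1}, and hence in Proposition~\ref{simple}, is genuinely uniform in both parameters: one must verify that the van der Corput order stays bounded (it does, by the distinct-exponent argument, independently of $i,j$) and that the $C^3$-norm of the amplitude, after all rescalings, does not grow — this requires choosing $N$ large and $\ve$ small so that $\tau^a$ and its derivatives are bounded on the rescaled annuli, and carefully tracking how $\psi(y_1)$ (which is lower order but not homogeneous) and the smooth factors $b_1^a, b_2^a, \be$ transform. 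Once that uniformity is in hand, the summation is immediate and the proof is complete. \epf
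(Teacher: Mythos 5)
Your proposal has a genuine gap at its core: the claimed decay rate $\gamma=5/6$ for the oscillatory integral in the transition region is incorrect, and Proposition~\ref{simple} is in fact \emph{insufficient} here. The paper itself flags this explicitly in the proof of Proposition~\ref{maxtrans}: after the bi-dyadic decomposition and rescaling, the limiting measure $\nu_{0,T}$ (as $\delta\to 0$) is supported on the hypersurface $\{(x_1,\omega(0)x_1^m,T+x_1x_2^2):|x_1|\sim1\sim|x_2|\}$, whose second component depends only on $x_1$. This surface has exactly \emph{one} non-vanishing principal curvature, so the best uniform decay one can extract is $\gamma=1/2$, and Proposition~\ref{simple} would then only give boundedness for $p>2$, which is far from $p>h$.

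Where your argument goes wrong is in the ``stationary phase in $y_2$ + van der Corput in $y_1$'' step. You treat the reduced phase in $y_1$ as a fixed function $\beta(0)y_1^n+\mbox{(lower order)}-\mbox{const}/y_1$, but after stationary phase the coefficients of the reduced phase depend on $(\xi_1,\xi_2,\xi_3)$: the coefficient of the $y_1^{-1}$ term is proportional to $\xi_2^2\delta_0^2/\xi_3$, and the coefficients of $y_1$ and $y_1^m$ are $\xi_1$ and $\xi_2\omega$. There is a whole cone of frequencies (essentially $\xi_3\to0$, $\xi_1\approx -m\omega(0)\xi_2$) along which no van der Corput gain in powers of $|\xi_3|$ is available, and this is precisely the direction reflecting the vanishing curvature of the limit surface. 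A related structural obstruction: the rescaled measure is parametrized as $(x_1,\delta_0x_2+x_1^m\omega,T+\phi(x,\delta))$; reparametrizing to a graph over the first two coordinates introduces a phase whose $x_2$-gradient is of size $1/\delta_0\gg1$, so the reduction to the regime $|\xi|\lesssim|\xi_3|$ on which Proposition~\ref{simple} relies breaks down. Your secondary decomposition in $y_2$ and the uniformity concerns you raise in the final paragraph do not touch either of these problems.

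The paper circumvents this by refusing to collapse everything to a single decay exponent. After the Phong--Stein bi-dyadic decomposition (indexed by $(j,k)$ with $mj\lesssim k\lesssim (n-1)j/2$, essentially your $(j,i)$ split) and rescaling, it performs a full Littlewood--Paley decomposition of the measure in \emph{all three} frequency variables, producing pieces $\nu_{\delta,T}^\lambda$. The maximal operator for each piece is estimated via Lemma~\ref{maxla}, which interpolates between an $L^1$ weak-type bound governed by $\|\nu_{\delta,0}^\lambda\|_\infty$ and an $L^2$ bound governed by $\|\widehat{\nu_{\delta,0}^\lambda}\|_\infty$. Two ingredients that your proposal does not use are decisive: the sharper kernel bound $\|\nu_{\delta,0}^\lambda\|_\infty\lesssim\min\{\lambda_2,\lambda_3\delta_0^{-1}\}$, and the structural inequality $\delta_0\ge T^{-1/2}$ of~\eqref{Test} coming from the transition-region constraint $mj+M\le k\le (n-1)j/2-M$. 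These let the six-case analysis (split on the relative sizes of $\lambda_1,\lambda_2,\lambda_3,\lambda_2\delta_0$) push down to $p>3/2$, which combined with the $j,k$-summation finally yields convergence for $p>h=2n/(n+1)$. In short, the correct mechanism is not a better oscillatory decay estimate fed into Proposition~\ref{simple}, but a finer frequency-space interpolation that exploits the size constraint on $\delta_0$; your plan as written would only recover the range $p>2$.
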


 \proof  Following our approach from \cite{IKM-max}, which had been inspired by Phong and Stein's article
   \cite{phong-stein}, we  decompose the domain $E$  (which is a domain of transition between the homogeneities  given be the weight $\ka$  and the weight $\ka^a$)   dyadically in each coordinate separately,
     and then re-scale each of the bi-dyadic pieces obtained in this way.

To this end, consider a dyadic partition of unity $\sum_{k=0}^\infty \chi_k(s)=1$
on the interval $0<s\le 1$ with $\chi \in C_0^\infty(\RR)$ supported
in the interval $ [1/2,4],$ where $\chi_k(s):=\chi(2^ks),$ and put
$$
\chi_{j,k}(x):=\chi_j(x_1)\chi_k(x_2),\quad j,k\in\NN.
$$
We then decompose $A^{\tau}_t$ into the operators
$$
A^{j,k}_tf(z):=\int_{\bR^2}
f\Big(z_1-ty_1,z_2-t(y_2+\psi(y_1)),
z_3-t(1+\pad(y))\Big)\tau^a (y) \, \eta^a(y)\,\chi_{j,k}(y)\, dy,
$$
 with associated maximal operators $\M^{j,k}.$

Notice that by choosing the neighborhood $\Om$ of the origin
sufficiently small, we need only consider sufficiently large $j,k.$
Moreover, because of the localization imposed by $ \tau^a ,$ it
suffices to consider only pairs $(j,k)$ satisfying
\begin{equation}\label{jkcond}
mj+M\le k\le \frac{(n-1)j}2-M,
\end{equation}
where $M$ can still be chosen sufficiently large, because we had the
freedom to choose $\ve$ sufficiently small and $N$ sufficiently
large. In particular, we have $j\sim k,$ and clearly
\begin{equation}\label{MtauMjk}
\|\M^\tau\|_{p\to p}\le \sum_{mj+M\le k\le \frac{(n-1)j}2-M}\|\M^{j,k}\|_{p\to p}.
\end{equation}

By re-scaling in the integral, we have
\begin{eqnarray*}
A_t^{j,k}f(z)
=2^{-j-k} \int_{\bR^2}&& f\Big(z_1-t2^{-j}y_1,z_2-t(2^{-k}y_2+2^{-mj}y_1^m\om(2^{-j}y_1)),\\
&& z_3-t(1+ \pad(2^{-j}y_1,2^{-k}y_2))\Big) \tilde \tau^{j,k}(y) \,
\tilde \eta^{j,k}(y)\, \chi(y_1)\chi(y_2)\, dy,
\end{eqnarray*}
with
$$
\tilde\tau^{j,k}(y):=\rho\Big(\frac{ y_2}{\ve 2^{k-m j}y_1^m}\Big)\,
(1-\rho) \Big(\frac{
y_2}{N2^{k-\frac{(n-1)j}2}y_1^{\frac{n-1}2}}\Big),
  \quad \tilde \eta^{j,k}(y):= \eta^a(2^{-j}y_1,2^{-k}y_2).
$$
Notice that, by \eqref{jkcond}, all derivatives of $\tilde \tau^{j,k}$
are uniformly bounded in $j,k.$

The scaling operators
$$
T^{j,k}f(z):=2^{\frac{(m+2)j+2k}p}f(2^j z_1, 2^{mj} z_2,
2^{j+2k}z_3)
$$ then transform these operators into the averaging operators $\tilde A_t^{j,k}:=T^{-j,-k}A_t^{j,k}T^{j,k},$ i.e., 
\begin{eqnarray*}
\tilde A_t^{j,k}f(z)
=2^{-j-k} &&\int_{\bR^2} f\Big(z_1-ty_1,z_2-t(2^{mj-k}y_2+y_1^m\om(2^{-j}y_1)),\\
&& z_3-t(2^{j+2k}+\tp^{j,k}(y))\Big) \tilde \tau^{j,k}(y) \, \tilde
\eta^{j,k}(y)\, \chi(y_1)\chi(y_2)\, dy,
\end{eqnarray*}
where
$$\tp^{j,k}(y):= 2^{j+2k}\phi^a(2^{-j}y_1,2^{-k}y_2).
$$
Notice that by \eqref{jkcond} we  then have 
$$
\tp^{j,k}(y)=b_1(0,0) y_1y_2^2+O(2^{-j}+2^{-k}+2^{-M}).
$$

In order to simplify notation let us put
\begin{equation}\label{scaledph}
\phi(x,\de):= b(x,\de)x_2^2+\de_3x_1^n
\beta(\de_1x_1),
\end{equation}
where
$$
b(x,\de):=b^a_1(\de_1x_1,\de_2x_2)x_1+\de_4x_2^2b^a_2(\de_2x_2).
$$
Here $\de:=(\de_0,\dots,\de_4)$ is supposed to be very small, i.e., $|\de|\ll 1.$ Then  for the special values 
\begin{equation}\label{dedef1}
\de_0:=2^{mj-k}, \, \de_1:=2^{-j},\,
\de_2:=2^{-k},\, \de_3:=2^{2k-(n-1)j},\, \de_4:=2^{j-2k},
\end{equation}
we find that $\tp^{j,k}(y)=\phi(y,\de).$  Notice also that, due to the condition \eqref{jkcond}, for these  values of $\de$  we have indeed $|\de|\ll 1.$  
\smallskip

From now on we shall then consider the phase as well as the corresponding averaging operators as quantities depending on the non-negative small  perturbation parameters $\de_i$ of which the vector $\de$ is composed, which are otherwise arbitrary. Moreover, we shall denote the variable  $y$ again by $x.$

Let us also introduce an additional parameter $T\ge 0,$ which in our application to the averaging operators $\tilde A_t^{j,k}$ will become
$$T=2^{j+2k}.$$ 
Then, for  these more general sets  of parameters $\de$ and $T,$ we introduce the measure $\nu_{\de,T}$ by putting
$$
\int f d\nu_{\de,T} :=\int f(x_1,\, \de_0x_2+x_1^m\omega(\de_1x_1),\,
T+\phi(x,\de))\eta(x,\de)\chi_1(x_1)\chi_1(x_2)dx,
$$
where $\eta(x,\de):=\eta^a(\de_1x_1,\de_2 x_2),$ and set  $A_{\de,T} f:=f*\nu_{\de,T}.$ By   $\M_{\de,T}$ we denote the  maximal operator corresponding to the averaging operators $(A_{\de,T})_t.$ 

Note that in the considered domain of integration  $|x_1|\sim1,\, |x_2|\sim1,$
and hence also $b(x,\de) \sim1,$ if we assume without loss of generality that $b_1(0,0)=1,$ and  that $\phi(x,0)=x_1x_2^2.$ 

For the particular  choice of $\de$ given by \eqref{dedef1} and $T=2^{j+2k}$, we then have $ \tilde A_t^{j,k}=2^{-j-k}(A_{\de,T})_t,$  so that 
 \begin{equation}\label{Mjk}
\|\M^{j,k}\|_{p\to p}\le 2^{-j-k}\|\M_{\de,T}\|_{p\to p}.
\end{equation}

It will thus suffice to estimate the maximal operator $\M_{\de,T}.$ Note, however,  that
 in view of  \eqref{jkcond}, in our application, where  $\de$ is given by  \eqref{dedef1} and $T=2^{j+2k},$ we have  that   
\begin{equation}\label{Test}
T\ge \de_0^{-2}, \quad \mbox{i.e., }\ \de_0\ge T^{-\frac 12}.
\end{equation}
We shall therefore  assume  that this relation between $\de_0$ and $T$ holds true also in  the subsequent  study of the maximal operator $\M_{\de,T},$  as well as that $|\de|\ll 1,$  for otherwise general $\de$ and $T.$ Notice that that this does not effect our definition of $\nu_{\de,T'}$ for $T'=0.$ 
\medskip

To begin with, notice that the  function $\phi(x,\de)$ is a small 
perturbation of the function $\phi(x,0)=x_1x_2^2,$ so that in the limit as $\de\to 0$ the limiting measure $\nu_0$ is supported in the hypersurface given  by all points  $(x_1, \om(0)x_1^m, T+x_1x_2^2)$  with $|x_1|\sim 1\sim |x_2|.$ Choosing $y_1=x_1$ and $y_2=x_1x_2^2$ as  new coordinates for this hypersurface, we see that it has exactly one non-vanishing principal curvature at every point, so that an application  of Proposition \ref{simple} (with $\ga=1/2$) would only allow to control the associated maximal operator for $p>2.$ We therefore must apply a more refined analysis  and shall invoke ideas as well as notation  from \cite{IMmon} (see, e.g., Section 4.1)  based on additional  dyadic decompositions in every  frequency  variable. This analysis will allow us to  take advantage of the lower bound for $\de_0$ given by \eqref{Test}. 

 To this end, we fix again   suitable smooth cut-off functions $\chi_l\ge 0$ on $\RR$ as in the proof of Proposition \ref{simple}  such that for $l\ge 1,$ $\chi_l(t)=\chi_1(2^{1-l}t)$ is supported where $|t|\sim 2^l$ and 
$$
\sum_{l=0}^\infty\chi_l(t)=1\quad \mbox{ for all } t\in\RR,
$$
and define for every multi-index $l=(l_1,l_2,l_3)\in\NN^3$ the cut-off function
\begin{equation}\nonumber
\chi_l(\xi):=\chi_{l_1}(\xi_1)\chi_{l_2}(\xi_2)\chi_{l_3}(\xi_3).
\end{equation}
In the sequel, we shall usually write $\la=(\la_1,\la_2,\la_3)$ in place of $(2^{l_1-1}, 2^{l_2-1},2^{l_3-1}),$ and define 
accordingly the complex measures $\nu_{\de,T}^\la$ by  
$$
\widehat{\nu_{\de,T}^\la}(\xi):=\chi_l(\xi)\widehat{\nu_{\de,T}}(\xi).
$$
Notice that if  $l_i\ge1$ for $i=1,2,3,$ then 
$$
\widehat{\nu_{\de,T}^\la}(\xi)=\chi_1\left(\frac{\xi_1}{\la_1}\right)\chi_1\left(\frac{\xi_2}{\la_2}\right)\chi_1\left(\frac{\xi_3}{\la_3}\right)
\widehat{\nu_{\de,T}}(\xi),
$$
and 
\begin{equation}\label{5.13+}
|\xi_i| \sim\la_i \quad \mbox{  on  } \supp\widehat{\nu_{\de,T}^\la}.
\end{equation}
 We then find that, in the sense of distributions, 
\begin{equation}\label{5.12+}
\nu_{\de,T}=\sum_{\la} \nu_{\de,T}^\la,
\end{equation}
where the summation $\sum_{\la} $  will always mean summation over the set $\Lambda$ of all {\it dyadic} $\la$ with  $\la_i\ge 2^{-1}$ for $i=1,2,3.$ 
 To simplify the subsequent discussion, we shall concentrate on those measures $\nu_{\de,T}^\la$ for which none of its components $\la_i$ equals $2^{-1},$ i.e.,  $l_i\ge 1,$ since the remaining cases where some  $l_i=0$  can be dealt with in the same way as the corresponding cases where $l_i\ge 1$ is small.
By   $\M_{\de,T}^\la$ we denote  the maximal operator corresponding to the convolution  operators $(A_{\de,T}^{\la})_tf:=f*(\nu_{\de,T}^{\la})_t, \ t>0.$  

The  Fourier transform of $\nu_{\de,T}$ is explicitly given by
$$
\widehat{\nu_{\de,T}} (\xi)=\int e^{-i\Phi(x,\de,T,\xi)}\eta(x,\de)\chi_1(x_1)\chi_1(x_2)dx,
$$
with complete phase 
$$
\Phi(x,\de,T,\xi):=\xi_1x_1+\xi_2(\de_0x_2+x_1^m\omega(\de_1x_1))+\xi_3(T+\phi(x,\de)).
$$

 The following lemma  will be used frequently:
 
 \begin{lemma}\label{maxla}
 \begin{itemize}
\item[(a)]   The maximal operator $\M_{\de,T}^\la$ is of weak-type $(1,1),$ with norm bounded by 
$$
 \|\M_{\de,T}^\la\|_{L^1\to L^{1,\infty}}\le C T \|\nu_{\de,0}^\la\|_\infty.
 $$
 where the constant $C$ is independent of $\de$ and $T.$
\item[(b)]  For $1<p\le2,$ the maximal operator $\M_{\de,T}^\la$ is bounded on $L^p$ with norm controlled by 
$$
 \|\M_{\de,T}^\la\|_{p\to p}\le C_p T^{\frac 2p-1} (\la_3T+\la_1+\la_2)^{1-\frac 1p} \|\nu_{\de,0}^\la\|_\infty^{\frac 2p -1}  \|\widehat{\nu_{\de,0}^\la}\|_\infty^{2-\frac 2p}.
 $$
 \end{itemize}
\end{lemma}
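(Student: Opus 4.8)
Both parts rest on the elementary remark that the translation $T$ enters the complete phase $\Phi(x,\de,T,\xi)$ only through the additive term $\xi_3T$, so that $\widehat{\nu_{\de,T}^\la}(\xi)=e^{-i\xi_3T}\,\widehat{\nu_{\de,0}^\la}(\xi)$ and hence $\nu_{\de,T}^\la(x_1,x_2,x_3)=\nu_{\de,0}^\la(x_1,x_2,x_3-T)$ on the physical side; in particular $\|\nu_{\de,T}^\la\|_\infty=\|\nu_{\de,0}^\la\|_\infty$, $\|\widehat{\nu_{\de,T}^\la}\|_\infty=\|\widehat{\nu_{\de,0}^\la}\|_\infty$, and $\nu_{\de,T}^\la$ is concentrated, up to a rapidly decreasing tail handled by the usual summation, on the box $\{|x_1|\lesssim1,\ |x_2|\lesssim1,\ |x_3-T|\lesssim1\}$, where it is bounded by $\|\nu_{\de,0}^\la\|_\infty$. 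To prove (a) I would follow the $L^1$-argument in the proof of Proposition~\ref{simple}, with one twist responsible for the single power of $T$: let $Q'$ be an axis-parallel box containing both the origin and the support box above, so $|Q'|\sim T$; for $t>0$ one has $y-t\,\supp\nu_{\de,T}^\la\subset y-tQ'$, a box of volume $\sim t^3T$ having $y$ at a corner, whence
\[
|f*(\nu_{\de,T}^\la)_t(y)|\ \le\ t^{-3}\|\nu_{\de,0}^\la\|_\infty\int_{y-tQ'}|f|\ \lesssim\ T\,\|\nu_{\de,0}^\la\|_\infty\,M^{(T)}f(y),
\]
where $M^{(T)}$ is the maximal operator over the one-parameter family of boxes $y-tQ'$, $t>0$. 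All these boxes have the same anisotropic shape, so the rescaling $z\mapsto(z_1,z_2,Tz_3)$ identifies $M^{(T)}$ with $M_{HL}$ up to a $T$-dependent Jacobian that cancels in the weak-type quotient; hence $\|M^{(T)}\|_{L^1\to L^{1,\infty}}\le\|M_{HL}\|_{L^1\to L^{1,\infty}}$ uniformly in $\de,T$, and taking the supremum over $t$ gives (a). (The same pointwise bound gives $\|\M_{\de,T}^\la\|_{p\to p}\lesssim T\|\nu_{\de,0}^\la\|_\infty$ for all $p>1$, but that is wasteful near $p=2$.)

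For (b) I would interpolate, via Marcinkiewicz' theorem, between the weak-type $(1,1)$ bound of (a) and the $L^2$ estimate
\[
\|\M_{\de,T}^\la\|_{2\to2}\ \lesssim\ \big(\la_3T+\la_1+\la_2\big)^{1/2}\,\|\widehat{\nu_{\de,0}^\la}\|_\infty .
\]
Writing $\theta:=2/p-1$, so that $1/p=\theta+(1-\theta)/2$, the geometric mean of the two bounds is exactly $C_p\,T^{2/p-1}(\la_3T+\la_1+\la_2)^{1-1/p}\|\nu_{\de,0}^\la\|_\infty^{2/p-1}\|\widehat{\nu_{\de,0}^\la}\|_\infty^{2-2/p}$, which is the asserted bound; the endpoint $p=2$ is the $L^2$ estimate itself.

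That $L^2$ estimate is obtained by the square-function scheme already used for Proposition~\ref{simple}. Writing $t=t'2^{-j}$ with $t'\in[1,2[$ and $j\in\ZZ$, one dominates $\M_{\de,T}^\la f$ by the $\ell^2_j$-norm of the local maximal operators $\sup_{t'\in[1,2[}|f*(\nu_{\de,T}^\la)_{t'2^{-j}}|$; since $\widehat{(\nu_{\de,T}^\la)_{t'2^{-j}}}$ is supported where $|\xi_i|\sim2^j\la_i$, these are almost orthogonal in $j$, and by Plancherel it suffices to bound each local maximal operator uniformly in $j$. The latter follows from Sobolev embedding in $t'$ (as in \cite{stein-book}, Ch.~XI.3) together with Plancherel: the term with no $t'$-derivative contributes $\|\widehat{\nu_{\de,0}^\la}\|_\infty$, the $\partial_{t'}$-term contributes the extra factor $2^{-j}(|\xi_1|+|\xi_2|+T|\xi_3|)\sim\la_1+\la_2+T\la_3$ on the relevant frequency set, and the geometric mean yields the power $(\la_1+\la_2+T\la_3)^{1/2}$. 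Here $T$ multiplies only $\la_3$ because $\partial_{\xi_3}\Phi=T+\phi(x,\de)=T+O(1)$ on $|x_1|\sim|x_2|\sim1$, whereas $\partial_{\xi_1}\Phi=x_1=O(1)$ and $\partial_{\xi_2}\Phi=\de_0x_2+x_1^m\om(\de_1x_1)=O(1)$ there, and differentiating the cut-offs $\chi_l$ only costs harmless factors $\la_i^{-1}\lesssim1$.

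The one genuinely delicate point is in part (a): one must compare with the \emph{anisotropic} maximal operator $M^{(T)}$ rather than $M_{HL}$, since covering $y-tQ'$ by a Euclidean ball about $y$ would cost $T^3$ in place of $T$. Everything else --- the Schwartz tails of $\nu_{\de,T}^\la$, the finitely many $\la$ with some component equal to $2^{-1}$, and the precise constants in the Sobolev step --- is routine book-keeping.
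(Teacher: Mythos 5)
Your proposal is correct and follows essentially the same route as the paper: the paper likewise obtains (a) from the observation that $\nu_{\de,T}^\la$ is essentially supported in a cuboid of dimensions $1\times1\times T$ on which it is bounded by $\|\nu_{\de,0}^\la\|_\infty$, obtains the $L^2$ bound $\|\M_{\de,T}^\la\|_{2\to2}\lesssim(\la_3T+\la_1+\la_2)^{1/2}\|\widehat{\nu_{\de,0}^\la}\|_\infty$ by the same Littlewood--Paley/Sobolev-embedding scheme as in Proposition~\ref{simple}, and deduces (b) from Marcinkiewicz interpolation. The one place where you go beyond the paper's wording is a point worth making explicit: the paper states the pointwise domination as $CT\|\nu_{\de,0}^\la\|_\infty M_{HL}(|f|)$, which, if $M_{HL}$ were read as the isotropic Hardy--Littlewood operator, would in fact cost a factor $T^3$; your introduction of the anisotropic one-parameter family $M^{(T)}$ of fixed-eccentricity boxes, identified with $M_{HL}$ by the measure-preserving rescaling $z_3\mapsto Tz_3$, is exactly the bookkeeping needed to make the single power of $T$ in the weak-type bound rigorous, and it is clearly what the authors intend.
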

\begin{proof}  The proof follows the pattern of the proof of Proposition \ref{simple}.
Indeed, arguing in the same way as in that proof and observing that still  the measures $\nu_{\de,T}^\la$ are  essentially supported in a  cuboid of dimensions comparable to 
$1\times 1\times T,$ we see that $\M_{\de,T}^\la f$ is dominated  by $CT \|\nu_{\de,0}^\la\|_\infty M_{HL}(|f|),$ which implies (a). 

Moreover, using again Littlewood-Paley theory, we also easily see that 
$$\|\M_{\de,T}^\la\|_{2\to 2}\le C_p(\la_3T+\la_1+\la_2)^{\frac 12}   \|\widehat{\nu_{\de,0}^\la}\|_\infty.
$$
(b) follows then again by an application of  Marcinkiewicz's interpolation theorem.
\end{proof} 
 
In order to estimate $ \|\nu_{\de,0}^\la\|_\infty,$ we write 
\begin{eqnarray}\nonumber
\nu^{\la}_{\de,0}(x)=\la_1\la_2\la_3&\int \check\chi_1\big({\la_1}(x_1-y_1)\big) \, \check\chi_1\big({\la_2}(x_2-\de_0y_2-y_1^m\omega(\de_1y_1))\big) \label{5.14+}\\
&\check\chi_1\big({\la_3}(x_3-\phi(y,\de))\big) \,\eta(y,\de)\,
\chi(y_1)\chi(y_2)\, dy_1dy_2,
\end{eqnarray}
where $\check f$ denotes the inverse Fourier transform of $f.$  Observe that   $|\pa_{y_2}\phi(y,\de)|\sim 1,$
so that $ (y_1,\phi(y_1,y_2,\de))$ can be used as coordinates in place of $(y_1,y_2).$  We may  therefore change  coordinates from $(y_1,y_2)$ to $(u_1,u_2)$ in this integral, where  $y_1=u_1/\la_1$ and  $\phi(y,\de)= u_2/\la_3,$  which easily leads to the uniform estimate $|\nu^{\la}_\de(x)|\le C \la_2,$ with $C$ independent of $x,\de$ and $\la.$ 
 Similarly, the change of coordinates $y_1=u_1/\la_1$ and  $y_2= u_2/(\de_0\la_2)$ leads to $|\nu^{\la}_\de(x)|\le C \la_3/\de_0.$ Altogether, we arrive at  the  uniform estimate
 \begin{equation}\label{nulainfty1}
 \|\nu_{\de,0}^\la\|_\infty\lesssim  \min \{\la_2, \la_3 \de_0^{-1}\}.
\end{equation}
  
 Recall also that $\phi(x,\de)=x_1x_2^2+O(\de),$  and that we are interested in exponents $2>p>h\ge 3/2.$ 
 
  We shall distinguish six cases depending on the relative sizes of $\la_1,\la_2,\la_3$ and $\de_0,$ and shall accordingly decompose the set $\Lambda$ of our dyadic $\la$'s   into subsets  $I_i,$  where $I_i$ will correspond to Case $i.$  It will therefore be convenient to use the following notation: for any given subset  $I\subset \Lambda,$ we let   $A_{\de,T}^{I}:= \sum_{\la\in I} A_{\de,T}^\la$
  denote the contribution to $A_{\de,T}$ by the operators $A_{\de,T}^\la$ with $\la\in I,$ with associated maximal operator $\M_{\de,T}^{I}.$ Then clearly 
  $$
 \| \M_{\de,T}^{I}\|_{p\to p}\le \sum_{\la \in I} \| \M_{\de,T}^{\la}\|_{p\to p},
  $$
  and moreover we shall have 
  $$
   \| \M_{\de,T}\|_{p\to p}\le \sum_{i =1}^6\| \M_{\de,T}^{I_i}\|_{p\to p}.
$$
For each of the maximal operators  $\M_{\de,T}^{I_i}$ we shall prove the following estimate:
\begin{equation}\label{MIi}
\| \M_{\de,T}^{I_i}\|_{p\to p}\lesssim T^{\frac 1p}.
\end{equation}
This will then  imply that $\| \M_{\de,T}\|_{p\to p}\lesssim T^{\frac 1p}=2^{\frac{j+2k}p},$  and  combining this with \eqref{Mjk} we see that 
$$
\| \M^{j,k}\|_{p\to p}\lesssim 2^{-j(1-\frac 1p)} 2^{k(\frac 2p -1)}.
$$
By \eqref{MtauMjk} we then obtain the estimate
$$
\|\M^\tau\|_{p\to p}\lesssim \sum_{mj+M\le k\le \frac{(n-1)j}2-M} 2^{-j(1-\frac 1p)} 2^{k(\frac 2p -1)}=\sum_{j\ge 0} 2^{-j(\frac{n+1}2-\frac np)}<\infty,
$$
  since  $p>h=2n/(n+1),$ which will conclude the proof of Proposition \ref{maxtrans}.
   \medskip
   
{\bf  Case 1: $\la_3\gtrsim \max\{\la_1,\la_2\}.$} 
Iterated integrations by parts  in $x_1$ then lead to the estimate
 \begin{equation}\nonumber 
 \|\widehat{\nu_{\de,0}^\la}\|_\infty \lesssim \la_3^{-N} \qquad \mbox{for every} \ N\in \NN.
\end{equation}
Moreover, by \eqref{nulainfty1} we have  $\|\nu_{\de,0}^\la\|_\infty\lesssim  \la_2.$  Observe also that $\la_3T$ is the dominant term in $\la_3T+\la_1+\la_2,$ and thus  Lemma \ref{maxla} implies 
$$
 \|\M_{\de,T}^\la\|_{p\to p}\lesssim T^{\frac 1p}\la_2^{\frac 2p-1} \la_3^{-N}
$$
for every $N\in \NN.$ This easily implies that 
\begin{equation}\label{Msum1}
\|\M_{\de,T}^{I_1}\|_{p\to p}\le \sum_{\la_3\gtrsim \max\{\la_1,\la_2\}}\|\M_{\de,T}^\la\|_{p\to p}\lesssim T^{\frac 1p}.
\end{equation}
 The constants in these estimate do not depend on $\de.$ 
 \smallskip

\medskip
{\bf  Case 2: $\la_1\gg \max\{\la_2,\la_3\}.$} 
Iterated integrations by parts  in $x_1$  here  lead to the estimate
 \begin{equation}\nonumber 
 \|\widehat{\nu_{\de,0}^\la}\|_\infty \lesssim \la_1^{-N} \qquad \mbox{for every} \ N\in \NN.
\end{equation}
Moreover, by \eqref{nulainfty1} we have  $\|\nu_{\de,0}^\la\|_\infty\lesssim  \la_2.$  Observe also that $\la_3T+\la_1+\la_2\lesssim  \la_1T.$   From here on we can proceed as in the previous case, with the roles of $\la_3$ and $\la_1$ interchanged, and in analogy with \eqref{Msum1} arrive at 
$$
\|\M_{\de,T}^{I_2}\|_{p\to p}\lesssim T^{\frac 1p}.
$$

   \medskip
   
{\bf  Case 3: $\la_2\gg  \max\{\la_1,\la_3\}.$} 
Then again iterated integrations by parts  in $x_1$  lead to the estimate
 \begin{equation}\nonumber 
 \|\widehat{\nu_{\de,0}^\la}\|_\infty \lesssim \la_2^{-N} \qquad \mbox{for every} \ N\in \NN.
\end{equation}
Also, by  \eqref{nulainfty1} we have  $\|\nu_{\de,0}^\la\|_\infty\lesssim  \la_2,$   and moreover clearly $\la_3T+\la_1+\la_2\lesssim \la_2T.$ Thus  Lemma \ref{maxla} implies 
$$
 \|\M_{\de,T}^\la\|_{p\to p}\lesssim T^{\frac 1p}\la_2^{-N}
$$
for every $N\in \NN,$ so that  
$$
\|\M_{\de,T}^{I_3}\|_{p\to p}\lesssim T^{\frac 1p}.
$$

 \medskip
 There remain those cases where $\la_1\sim \la_2\gg \la_3.$

\medskip
   
{\bf  Case 4: $ \la_1\sim \la_2$ and $ \la_2\de_0\ll \la_3\ll \la_2.$} 
Here, we may first integrate by parts  in $x_2$ $N$-times, and then either apply the method of stationary phase in $x_1$ or again integrate by parts in $x_1$ (in case that there is no critical point) in order to see that 
 \begin{equation}\nonumber 
 \|\widehat{\nu_{\de,0}^\la}\|_\infty \lesssim \la_3^{-N} \la_2^{-\frac 12}\qquad \mbox{for every} \ N\in \NN.
\end{equation}
Moreover, by \eqref{nulainfty1} we have  $\|\nu_{\de,0}^\la\|_\infty\lesssim  \la_2.$  Observe also that $\la_3T$ is the dominant term in $\la_3T+\la_1+\la_2,$ since, by \eqref{Test},
$$
\la_3T\gg \la_2T\de_0>\la_2\de_0^{-1}\gg \la_2. 
$$
Therefore   Lemma \ref{maxla} implies 
$$
 \|\M_{\de,T}^\la\|_{p\to p}\lesssim T^{\frac 1p}\la_2^{-2+\frac 3p} \la_3^{-N}
$$
for every $N\in \NN.$  Since $p>3/2,$ this shows that we can sum these estimates over the $\la\in I_4$ and obtain
$$
\|\M_{\de,T}^{I_4}\|_{p\to p}\lesssim T^{\frac 1p}.
$$

\medskip
{\bf  Case 5: $ \la_1\sim \la_2$ and $\la_3\ll \la_2\de_0.$} 
Arguing in the same way as in the previous case we here find that 
 \begin{equation}\nonumber 
 \|\widehat{\nu_{\de,0}^\la}\|_\infty \lesssim (\la_2\de_0)^{-N} \la_2^{-\frac 12}\qquad \mbox{for every} \ N\in \NN.
\end{equation}
Moreover, by \eqref{nulainfty1} we have  $\|\nu_{\de,0}^\la\|_\infty\lesssim  \la_3\de_0^{-1}.$  Therefore   Lemma \ref{maxla} implies that 
$$
 \|\M_{\de,T}^\la\|_{p\to p}\lesssim T^{\frac 2p-1}(\la_3T+\la_2)^{1-\frac 1p} (\la_3\de_0^{-1})^{\frac 2p-1} ((\la_2\de_0)^{-N} \la_2^{-\frac 12})^{2-\frac 2p}
$$
for every $N\in \NN.$ Since $\la_3T\ll \la_2T\de_0$ and, as before, $\la_2T\de_0\gg \la_2,$ we may control 
 $\la_3T+\la_2$ by $\la_2T\de_0,$ so that 
$$
 \|\M_{\de,T}^\la\|_{p\to p}\lesssim T^{\frac 1p} \de_0^{2-\frac 3p}  \la_3^{\frac 2p-1}(\la_2\de_0)^{-N}
$$
for every $N\in \NN.$ Summing first over all $\la_1\sim \la_2,$ then all $\la_2$ such that $\la_2\de_0\gg \la_3$ and finally over all $\la_3,$ we then find that 
$$
\|\M_{\de,T}^{I_5}\|_{p\to p}\lesssim T^{\frac 1p}\de_0^{2-\frac 3p}\le T^{\frac 1p},
$$
the last inequality being true since $p>3/2.$

\medskip
{\bf  Case 6: $ \la_1\sim \la_2$ and $\la_3\sim\la_2\de_0.$} In this case, there is possibly a (non-degenerate) critical point $x_2^c=x_2^c(x_1,\de,\xi)$ of the phase with respect to $x_2$ in the region where $|x_2|\sim 1.$ In that case, we apply the  method of stationary phase to the integration in $x_2,$ which leads to an oscillatory integral in $x_1$ whose phase is of the form 
$$
\xi_1x_1+\xi_2(\de_0x_2^c+x_1^m\omega(\de_1x_1))+\xi_3(x_1(x_2^c)^2+O(\de)).
$$
But, since $|\xi_3|\ll \max\{|\xi_1|,|\xi_2|\}$ under our assumptions,  the last term can be viewed as a small error term, and thus we may apply van der Corput's estimate of order $2$ to the remaining integral in $x_1$ and altogether arrive at the estimate
 \begin{equation}\nonumber 
 \|\widehat{\nu_{\de,0}^\la}\|_\infty \lesssim \la_3^{-\frac 12} \la_2^{-\frac 12}.
\end{equation}
If there is no critical point with respect to $x_2,$ integrations by parts in $x_2$ in place of an application of the method of stationary  phase lead to even better estimates. Moreover,  by \eqref{nulainfty1} we have  $\|\nu_{\de,0}^\la\|_\infty\lesssim  \la_2,$  and as in Case 4 we have $\la_3T\gg \la_2.$  Therefore   Lemma \ref{maxla} implies that 
$$
 \|\M_{\de,T}^\la\|_{p\to p}\lesssim T^{\frac 1p}\la_3^{1-\frac 1p}\la_2^{\frac 2p-1}( \la_3^{-\frac 12} \la_2^{-\frac 12})^{2-\frac 2p}=T^{\frac 1p}\la_2^{\frac 3p-2}.
$$
Since $p>3/2,$ we see that the sum over all indices $\la\in I_6$ considered in this case is finite, and we obtain 
$$
\|\M_{\de,T}^{I_6}\|_{p\to p}\lesssim T^{\frac 1p}.
$$

This finishes  the proof of Proposition \ref{maxtrans}.
\qed
\medskip

\subsection{The contribution by the region  near the principal root jet}\label{nearjet}

Finally, we consider the contribution to  the maximal operator $\M$ by the region 
$$ 
D_\pr:=\{x\in \Om:|x_2-\psi(x_1)|\le N|x_1|^a\}
$$
close to the principal root jet ($N$ is a fixed positive number). We localize to a region of this type by means of  the cut-off function
$$
\rho_2(x):=\rho\Big(\frac{ x_2-\psi(x_1)}{Nx_1^{a}}\Big).
$$
\begin{prop}\label{maxpr}
If $p>h=h(\phi),$ and if the neighborhood $\Om$ of $(0,0)$ is chosen sufficiently small,   the maximal operator $\M^{\rho_2}$
is bounded on $L^p$.
\end{prop}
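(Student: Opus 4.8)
The plan is to proceed along the lines of Section~\ref{adaptedc}, but now working in the \emph{adapted} coordinates and with the weight $\ka^a=(1/n,(n-1)/(2n))$, for which $1/|\ka^a|=2n/(n+1)=h$. First I would localise by $\rho_2$ and pass to the adapted coordinates $(y_1,y_2)=(x_1,x_2-\psi(x_1))$, so that
$$
A^{\rho_2}_tf(z)=\int_{\bR^2}f\big(z_1-ty_1,\,z_2-t(y_2+\psi(y_1)),\,z_3-t(1+\pad(y))\big)\,\eta^a(y)\,\rho\Big(\frac{y_2}{Ny_1^{a}}\Big)\,dy,
$$
with $\pad$ as in \eqref{phinada}. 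Since $D_\pr$ is essentially invariant under the dilations $\de^a_r$ associated to $\ka^a$, a dyadic decomposition by means of these dilations, followed by the $L^p$-isometric rescaling (which turns the transversal offset $1$ into the large parameter $T=2^k$), reduces matters exactly as for \eqref{mtomk} to
$$
\|\M^{\rho_2}\|_{p\to p}\le\sum_{k\ge k_0}2^{-|\ka^a|k}\,\|\M^a_k\|_{p\to p},
$$
where $\M^a_k$ is the rescaled maximal operator associated to a measure supported near the surface $\big(y_1,\ y_1^m\om(2^{-k/n}y_1)+2^{-\ve'k}y_2,\ 2^k+\phi^{a,k}(y)\big)$. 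Here $\phi^{a,k}(y):=2^k\pad(\de^a_{2^{-k}}y)=\pad_\pr(y)+O(2^{-\ve k})=b_1(0,0)\,y_1y_2^2+\be(0)\,y_1^n+O(2^{-\ve k})$ (cf.\ \eqref{padpr}; the $O(2^{-\ve k})$ being uniformly small in every $C^M$-norm, as after \eqref{mtomk}), and $\ve'=(n-1-2m)/(2n)>0$ because $n\ge 2m+2$. The essential structural point is that, after rescaling, the $y_2$-contribution to the second component of the surface has become negligible, whereas the principal root jet $\psi$ has survived as the genuine monomial $\om(0)y_1^m$. I would then invoke Proposition~\ref{simple} with $T=2^k\ge1$ and with $\si$ encoding the small perturbation parameters; it remains to verify the oscillatory integral estimate \eqref{je} uniformly in $k$, with an exponent $\ga$ satisfying \eqref{hga} for $d=h$, i.e.\ $h\ge 1+1/(2\ga)$.

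For the oscillatory integral $J(\xi,\xi_3)$ one may assume, as in Section~\ref{adaptedc}, that $|\xi|\lesssim|\xi_3|$ (and that $|\xi_3|\ge1$, the case $|\xi_3|\le1$ being trivial), so that the complete phase is $|\xi_3|\Psi(y)$ with $\Psi(y)=\phi^{a,k}(y)+s_1y_1+s_2\big(y_1^m\om(2^{-k/n}y_1)+2^{-\ve'k}y_2\big)$, $|s|\lesssim1$. On the support of the amplitude $|y_1|\sim1$, and $\pa_{y_2}^2\Psi=2b_1(0,0)y_1+O(2^{-\ve k})$ is bounded away from zero there, so the method of stationary phase applies in $y_2$: the critical point $y_2^c(y_1)$ has size $O(2^{-\ve''k})$, hence lies well inside $D_\pr$ in the region where $\rho_2\equiv1$ (off a neighbourhood of it one integrates by parts in $y_2$), and this step produces a factor $|\xi_3|^{-1/2}$. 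The remaining phase in $y_1$ equals $|\xi_3|$ times
$$
g(y_1)=\be(0)y_1^n+s_2\om(0)y_1^m+s_1y_1+r_k(y_1),
$$
where $r_k$ is uniformly small in every $C^M$-norm; its unperturbed part is a genuine trinomial with pairwise distinct exponents $1<m<n$ and top coefficient $\be(0)\ne0$. A short computation shows that $g''$ and $g'''$ cannot vanish simultaneously at any $y_1\sim1$ with $|s|\lesssim1$: if they did, then since $n\ne m$ one is forced to $s_2=0$, whence $g''=n(n-1)\be(0)y_1^{n-2}$, impossible for $y_1\ne0$. Hence $|g''|+|g'''|\gtrsim1$ uniformly on $\{|y_1|\sim1\}$ (by compactness, and this persists for $k$ large after the perturbation), so van der Corput's lemma of order at most $3$ — in the perturbation-stable form of Lemma~2.1 of \cite{IMmon} — bounds the $y_1$-integral by $|\xi_3|^{-1/3}$, giving $|J(\xi,\xi_3)|\lesssim(1+|\xi_3|)^{-5/6}$ uniformly in $k$.

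Thus \eqref{je} holds with $\ga=5/6$. Since $n\ge 2m+2$ and $m\ge2$ force $n\ge6$, we have $h=2n/(n+1)\ge 12/7>8/5=1+1/(2\ga)$, so \eqref{hga} is satisfied; moreover $p>h\ge 12/7>8/5$ also meets the hypothesis $p>1+1/(2\ga)$ of Proposition~\ref{simple}. That proposition then gives $2^{-|\ka^a|k}\|\M^a_k\|_{p\to p}\lesssim 2^{-(1/h-1/p)k}$ for all $p\ge h$, and for $p>h$ the series above converges, which proves the $L^p$-boundedness of $\M^{\rho_2}$. I expect the main work to be the uniform oscillatory integral estimate: one must control, uniformly in $k$ and in the frequency variables, the order of the critical points of the $y_1$-phase obtained after the partial stationary phase, while keeping track of the several error terms (from $\phi^{a,k}-\pad_\pr$, from the tail of $\om(2^{-k/n}y_1)$, and from $y_2^c\ne0$), and one must make sure the stationary-phase step in $y_2$ never escapes the region where $\rho_2\equiv1$.
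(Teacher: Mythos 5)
The proposal attempts to prove Proposition~\ref{maxpr} by a dyadic decomposition with respect to $\ka^a$, rescaling, and a direct appeal to Proposition~\ref{simple} via a pointwise oscillatory-integral decay estimate $|J(\xi,\xi_3)|\lesssim(1+|\xi_3|)^{-5/6}$. This is genuinely different from the paper's argument, which instead introduces the two-parameter family of measures $\nu_{\de,T}$, performs a \emph{triple} dyadic frequency decomposition into pieces $\nu^\la_{\de,T}$ indexed by $\la=(\la_1,\la_2,\la_3)$, and combines the quantitative Lemma~\ref{maxla} with the $L^\infty$ bound \eqref{nulainfty2} in a five-case analysis according to the relative sizes of $\la_1,\la_2,\la_3,\de_0$. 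Unfortunately the shortcut has a genuine gap.

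After the rescaling by $\de^a_{2^{-k}}$, the measure $\mu_k$ is supported on the parametrized surface
$y\mapsto\bigl(y_1,\ \de_0 y_2+y_1^m\omega(\de_1 y_1),\ T+\phi^k(y)\bigr)$
with $\de_0=2^{-(\ka^a_2-m\ka^a_1)k}\to 0$. This is \emph{not} of the form $\int f(x,T+\phi(x,\si))\,a(x,\si)\,dx$ required in Proposition~\ref{simple}: the second ambient coordinate is $\de_0 y_2+\psi_1(y_1)$, not $y_2$. If one reparametrizes by $z_2:=\de_0 y_2+\psi_1(y_1)$ to force the graph form, the amplitude becomes $\tilde a(z)/\de_0$ with $C^N$-norms blowing up like $\de_0^{-N-1}$, so the hypothesis \eqref{fdecay1} (which is stated in terms of $\|\eta\|_{C^{n+1}}$) becomes vacuous, and likewise the step $\|\mu^l\|_\infty\lesssim 2^l$ in the proof of Proposition~\ref{simple} becomes $\|\mu^l_k\|_\infty\sim 2^l/\de_0$; the lost factor $\de_0^{-(2/p-1)}=2^{(\ka^a_2-m\ka^a_1)(2/p-1)k}$ then precisely destroys convergence of the $k$-sum at $p\searrow h$. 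Relatedly, your reduction ``one may assume, as in Section~\ref{adaptedc}, that $|\xi|\lesssim|\xi_3|$'' is not available: that reduction uses that $\nabla_x(\xi\cdot x+\xi_{n+1}\phi)=\xi+O(|\xi_{n+1}|)$ is large when $|\xi|\gg|\xi_{n+1}|$, but here $\pa_{y_2}$ of the phase equals $\xi_2\de_0+\xi_3\pa_{y_2}\phi$, which is small precisely when $|\xi_2|\sim|\xi_3|/\de_0\gg|\xi_3|$; so a new regime of near-stationary points with $|y_2^c|\sim 1$ appears and must be analysed separately. Your computation covers only $|s_2|=|\xi_2/\xi_3|\lesssim 1$.

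This degeneration of the rescaled surface as $\de_0\to 0$ (it collapses onto a cylinder, with only one curvature surviving) is exactly the phenomenon the paper flags just before \eqref{5.13+} as the reason Proposition~\ref{simple} is insufficient, and it is why the proof of Proposition~\ref{maxpr} proceeds via Lemma~\ref{maxla}: there the weak-$(1,1)$ bound is $T\|\nu^\la_{\de,0}\|_\infty$ and the $L^2$ bound is $(\la_3T+\la_1+\la_2)^{1/2}\|\widehat{\nu^\la_{\de,0}}\|_\infty$, with $\|\nu^\la_{\de,0}\|_\infty\lesssim\la_2\la_3^{1/2}$ from \eqref{nulainfty2}; these track the $\de_0$-dependence correctly, unlike the crude inputs of Proposition~\ref{simple}. (Your secondary computation that $g''$, $g'''$ cannot vanish simultaneously is also slightly mis-stated --- the correct contradiction is that simultaneous vanishing would force $m=n$, not $s_2=0$ --- but the conclusion there is right; the essential gap is the misapplication of Proposition~\ref{simple} described above.)
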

In combination with Proposition \ref{maxoutroot} and \ref{maxtrans} this will complete the proof of Theorem \ref{s1.2}, with the exception of the case of  $D_4^+$ - type singularities. 

\begin{proof}

In the adapted coordinates $(y_1,y_2)=(x_1, x_2-\psi(x_1))$ the measure $\mu^{\rho_2}$ can  be expressed as 
 $$
 \int f\, d\mu=\int f(y_1,y_2+\psi(y_1), 1+\pad(y))\rho^a(y)\eta^a(y)\, dy,
 $$
 with $\eta^a$  a smooth function with  support  in a sufficiently  small neighborhood $\Om^a$ of the origin as before, and 
 $$
 \rho^a(y):=\rho\Big(\frac{ y_2}{Ny_1^{a}}\Big).
$$
Notice that $\rho^a$ is $\ka^a=(1/n, (n-1)/(2n))$-homogeneous of degree $0.$  Working now in these adapted coordinates $\y,$ we next proceed as in  Section \ref{adaptedc}, only with the weight  $\ka$ replaced by $\ka^a,$ and dilations $\de_r$ replaced by the $\ka^a$-dilations  $\de^a_r$ from Section \ref{nonadac}. Recall to this end from \eqref{padpr} that the principal part of $\pad,$ which is $\ka^a$-homogeneous of degree $1,$ is given by
$$
\pad_\pr\y=y_1y_2^2+y_1^n\be(0),
$$
if we assume again without loss of generality that $b_1(0,0)=1.$ 
 We choose a smooth bump-function $\chi_1$ supported in the annulus $\A$ such that 
$$
\sum_{k=k_0}^\infty \chi_1(\delta^a_{2^k}y)=1 \quad \mbox{for }\quad 0\ne y\in\Om^a,
$$
and decompose 
$$
\mu=\sum_{k=k_0}^\infty\tilde\mu_k,
$$
with 
$$
\int f\, d\tilde\mu_k:=\int f(y_1,y_2+y_1^m\om(y_1), 1+\pad(y))\rho^a(y)\eta^a(y)\chi_1(\delta^a_{2^k}y)\, dy.
$$
It will then suffice to derive suitable $L^p$-estimates for the maximal operators $\sup_{t>0}|f*(\tilde\mu_k)_t|.$ 
Applying a straight-forward $L^p$-isometric re-scaling to them by means of the dilations $\de^a_{2^{-k}},$ we may assume that these are of the  form $2^{-|\ka^a| k}\M_k f,$ where 
$$
\M_kf(y,y_3):=\sup_{t>0}|f*(\mu_k)_t(y,y_3)|
$$
and 
$$
\int f\, d\mu_k:=\int f\big(y_1,2^{-(\ka^a_2-m\ka^a_1)k}y_2+y_1^m\omega(2^{-\ka_1^ak}y_1), 2^k+\phi^k(y)\big)\rho\Big(\frac{ y_2}{Ny_1^{a}}\Big)\eta^a(\delta^a_{2^{-k}}y)\chi_1(y)\, dy.
$$
Here we have set $\phi^k(y):=2^k\pad(\de^a_{2^{-k}}(y)).$ Then  clearly 
$$\phi^k(y)=\pad_\pr(y)+O(2^{-\ve k})=y_1y_2^2+y_1^n\be(0)+O(2^{-\ve k})
$$ 
for some $\ve>0.$ Notice also that $|y_1|\sim 1$ and $|y_2|\lesssim 1$  on the support of $\mu_k.$

Similarly as in Section \ref{adaptedc},  the  following analogue of estimate \eqref{mtomk} holds true: 
\begin{equation}\label{mtomk2}
\|\M^{\rho_2}\|_{L^p\to L^p}\le \sum_{k= k_0}^\infty  2^{-|\ka^a| k}\|\M_k\|_{L^p\to L^p}.
\end{equation}

In order to simplify notation we shall here  write  
\begin{equation}\label{dedef2}
\de_0:=2^{-(\ka^a_2-m\ka^a_1)k},\, \de_1:=2^{-\ka_1^a k},\, \de_2:=2^{-\ka_2^a k} \\,\de_3:=2^{-\frac k{2n}} \quad \mbox{and }\quad T:=2^k,
\end{equation}
and put $\de:=(\de_0,\de_1,\de_2,\de_3).$ Recall that $a=\ka^a_2/\ka^a_1>m,$ so that $|\de|\ll 1.$ 

Observe that as in the previous subsection the relation \eqref{Test} is valid, i.e., 
$$
T\ge \de_0^{-2},
$$
since $2\ka^a_2=(n-1)/n<1,$ so that  $\de_0^{-2}\le 2^{2\ka^a_2k}\le 2^k=T.$ 
\smallskip

We  shall from now on consider the phase as well as the corresponding averaging operators as quantities depending on the non-negative perturbation parameters $\de_i$ of which the vector $\de$ is composed (the phase $\phi^k(y)$ will indeed be viewed as a function $\phi(y,\de)$ depending only on $y$ and the  ``dummy'' parameter $\de_3.$)  These  are assumed to be sufficiently small.    Accordingly, we shall re-write the measure $\mu_k$ as $\nu_{\de,T},$ where $\nu_{\de,T}$ is of the form
$$
\int f\, d\nu_{\de,T}:=\int f\big(y_1,\de_0y_2+y_1^m\omega(\de_1y_1), T+\phi(y,\de)\big)\eta(y,\de)\chi_0(y_2)\chi_1(y_1)\, dy,
$$
where $\phi(y,\de)$ and  $\eta(y,\de)$ are smooth functions in $y$ and $\de$, and where $\chi_0$ is smooth and  supported in a compact neighborhood of $0,$ whereas as before  $\chi_1(y_1)$ is supported where $|y_1|\sim 1.$  Moreover, 
$$
\phi(y,0) =y_1y_2^2+y_1^n\be(0).
$$
As in the previous Subsection \ref{transdom}, the corresponding averaging operator will be denoted by $A_{\de,T},$ with corresponding maximal operator $\M_{\de,T}.$ Then, in analogy with \eqref{Mjk}, we have 
$$
\|\M^{k}\|_{p\to p}\le \|\M_{\de,T}\|_{p\to p},
$$
if $\de$  and $T$ are   given by \eqref{dedef2}. We shall prove the following uniform estimate 
\begin{equation}\label{Mest4}
 \|\M_{\de,T}\|_{p\to p}\lesssim T^{\frac 1p},
\end{equation}
provided that $p>12/7.$  In combination with \eqref{mtomk2} this will imply that 
\begin{equation}\label{mtomk3}
\|\M^{\rho_2}\|_{L^p\to L^p}\lesssim \sum_{k= k_0}^\infty  2^{-|\ka^a| k}2^{\frac kp}<\infty,
\end{equation}
if $p>h=1/|\ka^a|$  and hence conclude the proof of Proposition \ref{maxpr}.

For the proof of  \eqref{Mest4} we follow our approach  from  the previous subsection. Using also  the same notation that we had introduced therein, we perform an additional dyadic frequency decomposition by putting 
$$
\widehat{\nu_{\de,T}^\la}(\xi)=\chi_1\left(\frac{\xi_1}{\la_1}\right)\chi_1\left(\frac{\xi_2}{\la_2}\right)\chi_1\left(\frac{\xi_3}{\la_3}\right)
\widehat{\nu_{\de,T}}(\xi),
$$
where 
$$
\widehat{\nu_{\de,T}} (\xi)=\int e^{-i\Phi(y,\de,T,\xi)}\eta(y,\de)\chi_0(y_2)\chi_1(y_1)\, dy,
$$
with complete phase 
\begin{eqnarray}\nonumber
\Phi(y,\de,T,\xi)&:=&\xi_1y_1+\xi_2(\de_0y_2+y_1^m\omega(\de_1y_1))+\xi_3(T+\phi(y,\de)) \\\label{phicompl2}
&=&\xi_1y_1+\xi_2(\de_0y_2+y_1^m\omega(\de_1y_1))+\xi_3(T+y_1y_2^2+y_1^n\be(0)+O(\de)).
\end{eqnarray}
Notice, however, that in contrast to the previous subsection, in this integral we have 
$$|y_1|\sim 1\quad \mbox {and} \quad  |y_2|\lesssim 1.
$$ 
By $\M_{\de,T}^\la,$  we denote the maximal operator defined by the dilates of $\nu_{\de,T}^\la.$

In analogy with  \eqref{5.14+}, we have  
\begin{eqnarray*}
|\nu^{\la}_{\de,0}(x)|\le \la_1\la_2\la_3&\int \big|\check\chi_1\big({\la_1}(x_1-y_1)\big) \, \check\chi_1\big({\la_2}(x_2-\de_0y_2-y_1^m\omega(\de_1y_1))\big) \\
&\check\chi_1\big({\la_3}(x_3-\phi(y,\de))\big) \,\eta(y,\de)\,
\chi_0(y_2)\chi_1(y_1)\Big|\, dy_1dy_2.
\end{eqnarray*}
We can estimate this by means of the same type of arguments that we used in \cite{IMmon}. Indeed, for $y_1$ fixed, we can first make use of the localization given by the third factor in this integral and apply the van der Corput-type Lemma 2.1 (b) of order $N=2$ in \cite{IMmon} to see that 
$$\int  |\check\chi_1\big({\la_3}(x_3-\phi(y,\de))\big)| \,dy_2\lesssim \la_3^{-\frac 12},
$$
uniformly in $y_1,x$ and $\de.$  Subsequently we may estimate the remaining integral in $y_1$ by performing the change of variables  $y_1\mapsto y_1/\la_1,$  which gains another factor $\la_1^{-1}.$ Altogether, this leads to  the  uniform estimate
 \begin{equation}\label{nulainfty2}
 \|\nu_{\de,0}^\la\|_\infty\lesssim  \la_2\la_3^{\frac 12}.
\end{equation}

 Again we may and shall apply Lemma \ref{maxla} and proceed by distinguishing  here five   cases, assuming always that $p>12/7.$ 

   \medskip
   
{\bf  Case 1: $\la_3\gtrsim \max\{\la_1,\la_2\}.$} 
For $\de=0$ and $T=0$ the complete phase is given by 
$$
\Phi(y,0,0,\xi):=\xi_1y_1+\xi_2y_1^m\omega(0)+\xi_3(y_1y_2^2+y_1^n\be(0)).
$$
We may here argue in a similar way as in Case 6 of the previous subsection. The  complete phase has a  non-degenerate critical point $y_2^c=0$ at the origin, so that we can apply the  method of stationary phase to the integration in $y_2.$ This  leads to an oscillatory integral in $y_1$ whose phase is given by 
$$
\xi_1y_1+\xi_2y_1^m\omega(0)+\xi_3y_1^n\be(0).
$$
To the remaining oscillatory integral in $y_1$  we may thus apply the version of  Corput's estimate from Lemma 2.1 (a) in \cite{IMmon}, of order $N=3,$ and altogether arrive at the estimate
 \begin{equation}\nonumber 
 \|\widehat{\nu_{\de,0}^\la}\|_\infty \lesssim \la_3^{-\frac 12} \la_3^{-\frac 13}=\la_3^{-\frac 56}
\end{equation}
for $\de=0.$ The argument is stable under small perturbations,  and thus this estimate remains valid for sufficiently small $\de.$  In combination with \eqref{nulainfty2} we may then conclude by means of    Lemma \ref{maxla}  that 
$$
 \|\M_{\de,T}^\la\|_{p\to p}\lesssim T^\frac1{p} \la_3^{1-\frac1{p}}( \la_2\la_3^{\frac 12})^{\frac2{p}-1}\la_3^{-\frac56(2-\frac2{p})}.
$$
Thus 
$$
\sum_{\la_1,\la_2\lesssim \la_3}\|\M_{\de,T}^\la\|_{p\to p}\lesssim T^\frac1{p}(\log \la_3) \la_3^{\frac{11}{3p}-\frac{13}6}.
$$
We can sum the last inequality in $\la_3$ provided  $p>{22}/{13}.$ In particular, since  $p>{12}/7>{22}/{13},$ we find that  the sum over all indices $\la\in I_1$ considered in this case is finite, and we obtain 
$$
\|\M_{\de,T}^{I_1}\|_{p\to p}\lesssim T^{\frac 1p}.
$$

\medskip
{\bf  Case 2: $\la_1\gg \max\{\la_2,\la_3\}.$} This case can be handled exactly as the corresponding case in the previous subsection  by means of iterated integrations by parts  in $y_1,$  and we easily get  for $p>12/7$
$$
\|\M_{\de,T}^{I_2}\|_{p\to p}\lesssim T^{\frac 1p}.
$$

   \medskip
   
{\bf  Case 3: $\la_2\gg  \max\{\la_1,\la_3\}.$} 
Also this case can be handled exactly as the corresponding case in the previous subsection, and we get for $p>12/7$ 
$$
\|\M_{\de,T}^{I_3}\|_{p\to p}\lesssim T^{\frac 1p}.
$$
 \medskip
   
{\bf  Case 4: $ \la_1\sim \la_2$ and $ \la_2\de_0\lesssim  \la_3\ll \la_2.$} 
In this case we have non-degenerate critical points in $y_2$ and $y_1$ as well. More precisely, applying first the method of stationary phase   to the integration in $y_2,$ and subsequently to the $y_1$-integration, we find that \begin{equation}\nonumber 
 \|\widehat{\nu_{\de,0}^\la}\|_\infty \lesssim \la_1^{-\frac12}\la_3^{-\frac12}.
 \end{equation}
  As in Case 4 of the previous subsection, $\la_3T$ is the dominant term in $\la_3T+\la_1+\la_2,$ and therefore   Lemma \ref{maxla}  implies  that 
$$
 \|\M_{\de,T}^\la\|_{p\to p}\lesssim T^\frac1{p} \la_3^{1-\frac1{p}} (\la_2\la_3^\frac12)^{(\frac2{p}-1)}(\la_2^{-\frac12}\la_3^{-\frac12})^{(2-\frac2{p})}= T^\frac1{p}\la_3^{\frac1{p}-\frac12}\la_2^{\frac3{p}-2}.
$$
Since  the exponent of $\la_3$  in this estimate is a positive real number, we can sum over all $\la_3\ll \la_2$ and obtain
$$
\sum_{\{\la_1,\la_3: \la_1\sim \la_2, \la_3\ll \la_2\}} \|\M_{\de,T}^\la\|_{p\to p}\lesssim 
T^\frac1{p}\la_2^{\frac4{p}-\frac52}.
$$
The expression on the right-hand side can be summed over  all $\la_2$ provided  $p>8/5.$ Since  ${12}/7>8/5,$ we conclude that for $p>12/7$ we have 
$$
\|\M_{\de,T}^{I_4}\|_{p\to p}\lesssim T^{\frac 1p}.
$$

\medskip
{\bf  Case 5: $ \la_1\sim \la_2$ and $\la_3\ll \la_2\de_0.$} 
Arguing in the same way as in the corresponding Case 5 of the previous subsection, we  find that 
 \begin{equation}\nonumber 
 \|\widehat{\nu_{\de,0}^\la}\|_\infty \lesssim (\la_2\de_0)^{-N} \la_2^{-\frac 12}\qquad \mbox{for every} \ N\in \NN.
\end{equation}
  Therefore   Lemma \ref{maxla} implies that 
$$
 \|\M_{\de,T}^\la\|_{p\to p}\lesssim T^{\frac 2p-1}(\la_3T+\la_2)^{1-\frac 1p} (\la_2\la_3^{\frac 12})^{\frac 2p-1} ((\la_2\de_0)^{-N} \la_2^{-\frac 12})^{2-\frac 2p}
$$
for every $N\in \NN.$ As before,  we may control 
 $\la_3T+\la_2$ by $\la_2T\de_0,$ so that 
$$
 \|\M_{\de,T}^\la\|_{p\to p}\lesssim T^{\frac 1p} \de_0^{2-\frac 3p}  \la_3^{\frac 1p-\frac 12}(\la_2\de_0)^{-N}
$$
for every $N\in \NN.$ Summing first over all $\la_1\sim \la_2,$ then all $\la_2$ such that $\la_2\de_0\gg \la_3$ and finally over all $\la_3,$ we then find that 
$$
\|\M_{\de,T}^{I_5}\|_{p\to p}\lesssim T^{\frac 1p}\de_0^{2-\frac 3p}\le T^{\frac 1p},
$$
the last inequality being true if $p>3/2,$ hence in particular for $p>12/7.$ 
\end{proof} 

\medskip

What remains is the study if $D_4^+$ -  type singularities. As it turns out,  this will indeed require an understanding also of  maximal functions associated to surfaces with $A_2$ - type singularities, where exactly one of the principal curvatures of $\phi$    does not vanish at the origin. This (deeper) study will be carried out in the next section.

\setcounter{equation}{0}
\section{Maximal operators associated to families of surfaces with $A_2$ - type singularities depending on small parameters}\label{ndairy}

Consider a smooth family of real-valued functions $\x\mapsto \phi(x_1, x_2, \sigma)$ defined on a given open neighborhood $U$ of the origin in $\RR^2$ and depending smoothly  on parameters $\sigma$ from a given open neighborhood  $V$ of the origin in $\RR^l, $   such that 
\begin{equation}\label{genricform}
\phi(x_1, x_2, 0)=a_2x_2^2+a_3x_1^3+\phi_r(x_1, x_2),
\end{equation}
where $a_2, a_3$ are non-zero real numbers and $\phi_r(x_1, x_2)$ is a smooth function whose Newton polyhedron satisfies  $\N(\phi_r)\subset \{t_1/3+t_2/2>1\}$ (so that $a_2x_2^2+a_3x_1^3$ is the principal part of $\phi$ when $\si=0$). 

The  goal for  this section will be to prove the following 

\begin{thm}\label{nondegairypar}
Denote by $\M^\si_T$ the maximal operator
\begin{equation}\label{masi}
\M^\si_T f(y,y_3):=\sup_{t>0}\Big|\int_{\bR^2} f(y-tx, y_3-t(T+\phi(x,\si)))\eta(x,\si) \, dx\Big |,
\end{equation}
where $T\ge 1,$ and 
where $\eta$ is a smooth, non-negative  function supported in $U\times V.$ Then, if we  assume that the support of $\eta$ is contained in a sufficiently small neighborhood of the origin,    the 
maximal operators $\M^\si_T$ are  uniformly bounded on $L^p(\RR^3)$ in $\si,$ for any given $p>3/2,$ with norm 
$$
\|\M^\si_T\|_{p\to p}\le C_{\de,p} T^{\frac 1p+\de},
$$
for every given $\de>0.$ 
\end{thm}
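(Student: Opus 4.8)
The plan is to adapt the dyadic-frequency machinery of Sections~\ref{transdom} and~\ref{nearjet} to the present situation, where the phase $\phi(x,\si)$ has an $A_2$ (Airy-type) singularity at the origin with one non-vanishing principal curvature. As in those sections, the starting point is the observation that the surface carried by the limiting measure (at $\si=0$) has exactly one non-vanishing principal curvature, so a crude application of Proposition~\ref{simple} only yields $L^p$-boundedness for $p>2$; one gains the range $p>3/2$ by exploiting the cubic degeneracy $a_3x_1^3$ in the flat direction via a refined analysis. Since $T$ is a large translation parameter and there is no dilation structure available to convert powers of $T$ into a geometric series (we are not decomposing a fixed surface here), we allow the loss $T^{1/p+\de}$; this is exactly the shape of estimate needed when this theorem is later fed into the treatment of $D_4^+$.

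First I would perform a dyadic decomposition of the frequency variables, writing $\mu^\si=\sum_\la \mu^{\si,\la}_T$ with $|\xi_i|\sim\la_i$ on the support of $\widehat{\mu^{\si,\la}_T}$, exactly as in \eqref{5.12+}, and denote by $\M^{\si,\la}_T$ the corresponding maximal operator. The analogue of Lemma~\ref{maxla} applies verbatim: the pieces are essentially supported in a $1\times1\times T$ cuboid, giving a weak-$(1,1)$ bound $\lesssim T\|\mu^{\si,\la}_0\|_\infty$, and an $L^2$ bound $\lesssim(\la_3T+\la_1+\la_2)^{1/2}\|\widehat{\mu^{\si,\la}_0}\|_\infty$ via Littlewood--Paley and Plancherel, hence by Marcinkiewicz an $L^p$ bound
$$
\|\M^{\si,\la}_T\|_{p\to p}\lesssim T^{\frac2p-1}(\la_3T+\la_1+\la_2)^{1-\frac1p}\|\mu^{\si,\la}_0\|_\infty^{\frac2p-1}\|\widehat{\mu^{\si,\la}_0}\|_\infty^{2-\frac2p}.
$$
The density estimate $\|\mu^{\si,\la}_0\|_\infty\lesssim\min\{\la_2,\la_3\}\,\la_3^{?}$ is obtained by the change-of-variables trick using that $\pa_{x_2}\phi\sim x_2$ is non-degenerate away from $x_2=0$ (and a van der Corput estimate of order $2$ in $x_2$ near $x_2=0$, as in \eqref{nulainfty2}, yielding the extra $\la_3^{-1/2}$-type gain there). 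The oscillatory estimate $\|\widehat{\mu^{\si,\la}_0}\|_\infty$ is then split into cases according to the relative sizes of $\la_1,\la_2,\la_3$: when one of them dominates, iterated integration by parts gives rapid decay in that variable; in the remaining régime $\la_1\sim\la_2\gtrsim\la_3$ one applies stationary phase in $x_2$ (non-degenerate critical point) and then van der Corput of order up to $3$ in $x_1$, coming from the cubic $a_3x_1^3$ together with the linear and $x_1^m$-type lower order terms, which have pairwise distinct exponents. This yields $\|\widehat{\mu^{\si,\la}_0}\|_\infty\lesssim\la_2^{-1/2}\la_3^{-1/3}$ in the critical case, and one checks the resulting bound on $\|\M^{\si,\la}_T\|_{p\to p}$ sums over $\la$ (with at worst a logarithmic loss, which is absorbed into the $T^\de$) precisely when $p>3/2$.

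The main obstacle, and the reason Theorem~\ref{nondegairypar} is stated as a separate result rather than handled by the arguments of Section~\ref{nonadac}, is the genuine Airy behaviour in the $x_1$-direction: unlike the $D$-type cases, the flat variable here carries a \emph{cubic} singularity $a_3x_1^3$, so after reducing to a one-dimensional integral in $x_1$ one is facing a phase with a possible root of multiplicity $3$, and the van der Corput exponent $1/3$ is exactly what forces the restriction to $p>3/2$ and no better. One must be careful that the critical point in $x_1$ can collide with the boundary of the annulus $|x_1|\sim1$ uniformly in $\si$, and that the perturbation terms $\phi_r$ and the $\si$-dependence do not destroy the cubic structure; here the hypothesis $\N(\phi_r)\subset\{t_1/3+t_2/2>1\}$ guarantees that $a_2x_2^2+a_3x_1^3$ is genuinely the $\ka$-principal part for the weight $\ka=(1/3,1/2)$, so the error terms are of strictly higher $\ka$-degree and the stationary-phase/van der Corput analysis is stable under small $\si$. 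The uniformity in $\si$ then follows because all constants in the van der Corput and stationary-phase estimates depend only on finitely many derivatives of $\phi(\cdot,\si)$, which are uniformly controlled on the small neighborhood $U\times V$.
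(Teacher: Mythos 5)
Your outline correctly identifies the $A_2$/Airy structure as the heart of the matter and the need for uniformity in $\si$; however, the decomposition and interpolation scheme you propose does \emph{not} achieve the claimed range $p>3/2$, and this is precisely where the paper's proof departs fundamentally from Sections~\ref{transdom} and~\ref{nearjet}.

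Concretely, run your own estimates in the critical regime $\la_1\sim\la_2\sim\la_3=:\la$. Stationary phase in $x_2$ (coefficient $\sim\xi_3$) combined with a third-order van der Corput bound in $x_1$ gives at best $\|\widehat{\mu^\la_0}\|_\infty\lesssim\la^{-5/6}$, while the density estimate from the change-of-variables/van der Corput argument gives $\|\mu^\la_0\|_\infty\lesssim\la_2\la_3^{1/2}$, exactly as in \eqref{nulainfty2}. Feeding these into the interpolated bound from the analogue of Lemma~\ref{maxla} yields, after summing over $\la_1,\la_2\lesssim\la_3$, an exponent $\la_3^{\frac{11}{3p}-\frac{13}{6}}$ (up to a logarithm), which sums precisely when $p>22/13\approx1.69$. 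This is the same computation that appears in Case~1 of Section~\ref{nearjet}, where $p>12/7$ suffices and the $22/13$ threshold is harmless; but $22/13>3/2$, so here the bound is simply not good enough. The phrase ``one checks \ldots\ precisely when $p>3/2$'' is therefore incorrect: the naive $\la$-trichotomy plus crude van der Corput caps out at $p>22/13$.

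The missing ideas are twofold, and they are what makes the paper's treatment of this theorem structurally different from Section~\ref{nonadac}. First, one needs the finer dyadic decomposition with respect to the distance to the Airy cone $\{B_1(s',\si)=0\}$ (the splitting into $\mu_{Ai}^\la$ and $\mu^\la_k$), which captures the fact that the $|\xi_3|^{-5/6}$ decay occurs only on a thin set, with $|\xi_3|^{-1}$ decay off it. Second, and more importantly, the $L^1$-side input in your scheme is the crude bound $\|\M^\la_T\|_{L^1\to L^{1,\infty}}\lesssim T\|\mu^\la_0\|_\infty$ from Lemma~\ref{maxla}(a); the paper instead invokes Proposition~\ref{maxproj}, which controls $\|\M_A\|_{L^{1+\ve}\to L^{1+\ve}}$ by $R^n|\pi(A)|$. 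The point is that the kernels $\mu_{Ai}^\la$ and $\mu^\la_k$ are, up to rapidly decaying tails, supported in a very thin neighborhood (of width $\sim\la^{\de-1/3}\times1\times\la^{\de-1}$, and $2^{-2k/3}\la^{\de-1/3}\times1\times\la^{\de-1}$, respectively) of a curve in $\RR^3$; the spherical projection of this neighborhood, translated by $\Ga(\si)$ with $|\Ga(\si)|\sim T$, has measure $\lesssim T^{-2}\la^{\de-1/3}$. This produces the key improvement $\|\M^\la_{Ai}\|_{L^{1+\ve}\to L^{1+\ve}}\lesssim T\la^{2/3+\de}$ (compare: Lemma~\ref{maxla} would only give $T\la$), and interpolating this with the $L^2$ bound $T^{1/2}\la^{-1/3}$ gives summability in $\la$ (and then in $k$) exactly for $p>3/2$. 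Without Proposition~\ref{maxproj} and the geometric localization of the kernel that makes it effective, the threshold $p>3/2$ is out of reach by the interpolation argument you propose.
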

\begin{remark}\label{notorigin}
{\rm As our proof will show, the same result holds true even if $U$ is a small neighborhood of any  point of distance $\lesssim 1$ to the origin and $T\gg 1$ sufficiently  large.  This will become important to our application of the theorem in the last Section \ref{D4est}.}
\end{remark}

Our proof will rely on the following result on normal forms,   which is a parameter dependent version  of the analogous result  for singularities of type $A_2$ in  Proposition 2.11 of \cite{IMmon}:


\begin{lemma}\label{simpar}
Assume that $\phi_r\equiv 0$ in \eqref{genricform}. 
By  restricting ourselves to sufficiently small open neighborhoods $U$ of $(0,0)$ in $\RR^2$ and $V$ of the origin in the parameter space $\RR^l,$ we can find  affine-linear  coordinates depending  smoothly on $\si$ so that,  in these new coordinates, the function 
$\phi(x_1, x_2, \sigma)$ can be written in the form
\begin{equation}\label{simparform}
\phi(x_1, x_2, \sigma)=b(x_1, x_2, \sigma)(x_2-x_1^m\omega(x_1,\sigma))^2+x_1^3\beta(x_1, \sigma)+\beta_1(\sigma)x_1+\beta_0(\sigma),
\end{equation}
where  $b, \beta, \beta_0, \beta_1$ and $\om$  are smooth functions and $m\ge2$ is a positive integer,
 such that the following hold true:
 \begin{equation}\label{eqnew1}
b(x_1,x_2, 0)=a_2\neq 0, \;\beta(x_1,0)=a_3\neq 0, \beta_0(0)=\beta_1(0)=0, \ \mbox{and  }�\  \om(x_1,0)=0.
\end{equation}
 \end{lemma}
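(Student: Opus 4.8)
The plan is to adapt the scheme underlying Proposition~2.11 in \cite{IMmon} to the present parameter-dependent situation, carrying the parameter $\si$ along throughout and insisting that every coordinate change be affine-linear in $x$ and smooth in $\si.$ Since $\phi_r\equiv 0,$ at $\si=0$ the function $\phi(\cdot,0)=a_2x_2^2+a_3x_1^3$ is already of the form \eqref{simparform} with $\om\equiv 0,$ $\beta\equiv a_3,$ $\beta_0\equiv\beta_1\equiv 0$ and $b\equiv a_2;$ hence the change of coordinates I construct will be the identity at $\si=0$ and a small perturbation of it for $\si$ nearby. All auxiliary objects produced below come from the implicit function theorem or from Taylor's theorem with parameters, so they are automatically smooth jointly in $(x_1,\si)$ on a sufficiently small neighborhood of the origin, which forces the stated shrinking of $U$ and $V.$

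First I would split off the nondegenerate direction. Since $\pa_{x_2}^2\phi(0,0,0)=2a_2\ne 0,$ the implicit function theorem applied to $\pa_{x_2}\phi(x_1,x_2,\si)=0$ yields a smooth $\rho(x_1,\si)$ with $\pa_{x_2}\phi(x_1,\rho(x_1,\si),\si)=0,$ and $\rho(x_1,0)=0$ because $\pa_{x_2}\phi(x_1,x_2,0)=2a_2x_2.$ A second-order Taylor expansion in $x_2$ about $x_2=\rho$ (remainder in integral form) gives
$$
\phi(x_1,x_2,\si)=g(x_1,\si)+\tilde b(x_1,x_2,\si)\,(x_2-\rho(x_1,\si))^2,
$$
with $g(x_1,\si):=\phi(x_1,\rho(x_1,\si),\si),$ where $g(x_1,0)=a_3x_1^3$ and $\tilde b(x_1,x_2,0)=a_2\ne 0.$

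Next I would normalize the cubic direction by an $x_1$-translation, and only afterwards straighten the root jet by an $x_2$-shear. Because $\pa_{x_1}^3g(0,0)=6a_3\ne 0,$ the coefficient of $x_1^2$ in $g(x_1+s,\si),$ namely $G_2(s,\si):=\tfrac12(\pa_{x_1}^2g)(s,\si),$ satisfies $G_2(0,0)=0$ and $\pa_sG_2(0,0)=3a_3\ne 0,$ so the implicit function theorem produces a smooth $s(\si)$ with $s(0)=0$ and $G_2(s(\si),\si)\equiv 0.$ Replacing $x_1$ by $x_1+s(\si)$ turns $g$ into $\tilde g(x_1,\si):=g(x_1+s(\si),\si),$ whose $x_1^2$-coefficient vanishes; splitting off its value and $x_1$-derivative at $x_1=0$ gives $\tilde g(x_1,\si)=\beta_0(\si)+\beta_1(\si)x_1+x_1^3\beta(x_1,\si)$ with $\beta_0(0)=\beta_1(0)=0$ and $\beta(x_1,0)=a_3\ne 0.$ Under this same translation $\rho$ becomes $\rho'(x_1,\si):=\rho(x_1+s(\si),\si),$ still vanishing at $\si=0,$ and $\tilde b$ becomes a smooth $\tilde b'$ with $\tilde b'(0,0,0)=a_2.$ Now expand $\rho'(x_1,\si)=\rho'_0(\si)+\rho'_1(\si)x_1+x_1^2\om(x_1,\si)$ by Taylor's theorem in $x_1,$ so that $\rho'_0(0)=\rho'_1(0)=0$ and $\om(x_1,0)=0;$ the affine-linear substitution $x_2\mapsto x_2+\rho'_0(\si)+\rho'_1(\si)x_1$ leaves $\tilde g$ and $x_1$ untouched and converts the quadratic term into $b(x_1,x_2,\si)(x_2-x_1^2\om(x_1,\si))^2$ with $b(0,0,0)=a_2,$ and with $b(x_1,x_2,0)=a_2$ since at $\si=0$ the whole composite change is the identity. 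The composition of the substitutions of this step is affine-linear in $x,$ smooth in $\si,$ equal to the identity at $\si=0,$ and puts $\phi$ in the form \eqref{simparform} with $m=2,$ all the conditions in \eqref{eqnew1} being satisfied; any value $m\ge 2$ permitted in the statement is then trivially admissible, the construction producing $m=2.$

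I expect the only genuinely delicate point to be this order of operations: the $x_1$-translation must precede the $x_2$-shear, for otherwise the shear would reintroduce constant and $x_1$-linear terms into the root jet $x_1^2\om$ and destroy the normal form. Everything else is routine verification of joint smoothness of the auxiliary functions via the implicit function theorem and Taylor's theorem with parameters, together with checking the normalizations at $\si=0,$ where all the coordinate changes collapse to the identity.
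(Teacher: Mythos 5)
Your proof is correct and follows essentially the same route as the paper: implicit function theorem on $\partial_{x_2}\phi=0$ to split off the nondegenerate $x_2$-direction, implicit function theorem on the second $x_1$-derivative to locate the $\sigma$-dependent translation in $x_1$ that kills the $x_1^2$-term, and finally the affine shear in $x_2$ that removes the constant and linear parts of the root jet. The only cosmetic difference is that you fix $m=2$ outright, whereas the paper leaves $m\ge 2$ unspecified (choosing it larger when $\psi$ is flatter); since \eqref{eqnew1} only requires $\omega(x_1,0)=0$, your choice is a valid instance of the statement.
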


\begin{proof} Following the proof of Proposition 2.11 in \cite{IMmon}, we consider the equation
\begin{equation}\label{shiteq}
\pa_2\phi(x_1, x_2, \sigma)=0.
\end{equation}

Since $\pa_2^2\phi(0, 0, 0)\neq0,$ the implicit function theorem shows that locally near $(0,0,0),$  this equation  has a unique, smooth solution  $x_2=\psi(x_1, \sigma)$ with $\psi(0, 0)=0$. In fact, since $\phi_r\equiv 0,$ we even have  $\psi(x_1, 0)=0.$  A Taylor series expansion of  the function $\phi(x_1, x_2,\sigma)$ with respect to the variable $x_2$  around $\psi(x_1,\si)$ then reveals  that 
\begin{equation}\label{divide}
\phi(x_1, x_2, \sigma)=b(x_1, x_2, \sigma)(x_2-\psi(x_1, \sigma))^2+b_0(x_1, \sigma),
\end{equation}
where $b, b_0$ are smooth functions satisfying the conditions $b(x_1, x_2, 0)=a_2\neq0$ and $b_0(0, 0)=\pa_1 b_0(0, 0)=\pa_1^2b_0(0, 0)=0$ and $\pa_1^3b_0(x_1, 0)=6 a_3\neq0$.

Again by the implicit function theorem, we then see that the equation  $\pa_1^2b_0(x_1, \sigma)=0$ locally near $\si=0$  has a smooth solution $x_1=x_1(\sigma)$ with $x_1(0)=0$. Applying next the change of coordinates  $x_1\mapsto x_1+x_1(\sigma),$ we thus see that we may  assume that $\phi$ is of the form
$$
\phi(x_1, x_2, \sigma)=b(x_1+x_1(\sigma), x_2, \sigma)(x_2-\psi(x_1+x_1(\sigma), \sigma))^2+x_1^3\beta(x_1, \sigma)+\beta_1(\sigma)x_1+\beta_0(\sigma),
$$
with smooth functions $\beta, \beta_1, \beta_0$ satisfying $\beta(x_1, 0)=a_3\neq0, \beta_1(0)=0, \beta_0(0)=0.$

By means of a   Taylor series expansion, we also  see that we can write 
$$
\psi(x_1+x_1(\sigma), \sigma)=\psi(x_1(\sigma), \sigma)+\pa_1\psi(x_1(\sigma), \sigma)x_1+x_1^m\omega(x_1, \sigma),
$$
where $\omega$ is a smooth function and $m\ge2$ is a positive integer. Observe that if all derivatives of $ \psi(x_1+x_1(\sigma), \sigma)$ with respect to $x_1$ vanish at the origin, we may choose $m$ as large as we wish. Notice also that since $\psi(x_1,0)=0,$ we have that 
 $\om(x_1,0)=0.$

Finally, after applying the affine change of variables 
$$
(x_1,x_2-\psi(x_1(\sigma), \sigma)-\pa_1\psi(x_1(\sigma), \sigma)x_1)\mapsto (x_1,x_2 ),
$$
we arrive at the conclusion of the lemma. 
\end{proof}

\noi {\bf Proof of Theorem \ref{nondegairypar}.} By passing from the phase $\phi(x,\si)$ to $\tilde \phi(x,\si):=\phi(x,\si)-\phi(0,\si)-\nabla\phi(0,\si)\cdot x$ and applying a suitable linear change of coordinates to the ambient space $\RR^3,$ it is easily seen that we may assume without loss of generality that $\phi(0,\si)=0$ and $\nabla\phi(0,\si)=(0,0).$ 

 As a next  step, notice that the proof can be reduced to the case $\phi_r\equiv 0$ considered in Lemma \ref{simpar}, by adding   further parameters to $\si.$ 
 
 \smallskip
 Indeed, observe that we may write 
  $$
 \phi(x_1,x_2,\si)=\al_2(\si) x_2^2+\al_3(\si) x_1^3 +\phi_r(x,\si) +\beta_1(\si) x_1^2+\beta_2(\si)x_1x_2,
 $$
 where $\al_2(\si),\al_3(\si), \beta_1(\si)$ and $\beta_2(\si)$ are smooth functions of $\si$ such that $\al_2(0)=a_2, \al_3(0)=a_3$ and 
 $\beta_1(0)=\beta_2(0)=0,$ and where $\phi_r(x_1,x_2,\si)$ is smooth such that  $\N(\phi_r)\subset \{t_1/3+t_2/2>1\},$ for every $\si.$ 
 
Thus, if we put   $\phi_{(s)}(x_1,x_2,\si):=\frac 1s \phi( s^{1/3}x_1,s^{1/2}x_2,\si),\, s>0,$ then
$$
\phi_{(s)}(x_1,x_2,\si)=\al_2(\si) x_2^2+\al_3(\si) x_1^3 +s^{\frac 16} \phi_r(x,\si,s^{\frac 16}) +\frac {\beta_1(\si)}{s^{\frac 13}} x_1^2
+\frac{\beta_2(\si)}{s^{\frac 16}}x_1x_2,
 $$
 where also the new function $\phi_r$ is a smooth function of its arguments. Let us therefore view  $s_1=s^{ 1/6},$ 
 $s_2= {\beta_1(\si)}/{s^{1/3}}$ and $s_3=\beta_2(\si)/s^{1/6}$ as new, small parameters and  define a new parameter vector 
 $\tilde \si:=(s_1,s_2,s_3,\si).$ This allows to write  
$$
\phi_{(s)}(x_1,x_2,\si)=\al_2(\si) x_2^2+\al_3(\si) x_1^3 +s_1 \phi_r(x,\si,s_1) +s_2 x_1^2+s_3x_1x_2=:\tilde\phi(x_1,x_2,\tilde\si).
 $$
Notice here that if we first choose $s$ sufficiently small (the size of $s$ will  determine  how much we have shrunk  the support of the amplitude $\eta$),  and then $\si$ sufficiently small (depending on our chosen $s$), then we may indeed also assume that the new parameters are sufficiently small. This allows us to consider the components of $\tilde\si$ as independent, small parameters.

But then, $\tilde\phi(x_1,x_2,0)=a_2x_2^2+ a_3x_1^3,$ so that indeed $\tilde\phi_r\equiv 0.$ Moreover, our discussion shows that it  clearly  suffices to prove the theorem for the phase $\tilde\phi$ in place of $\phi$  and $\tilde \si$ sufficiently small.
 \smallskip
 
 Thus, let us henceforth assume that $\phi_r\equiv 0,$  so that  Lemma \ref{simpar} applies.
 Due to this lemma,  after applying a suitable  linear change of variables  (depending possibly on $\si$) to the ambient space $\RR^3,$ we may assume that  the maximal operator $\M^\si_T$ is of the following form:

\begin{equation}\label{maxnonaipar}
\M^\si_T f(y,y_3):=\sup_{t>0}\left|\int f(y-t(x+\al(\sigma)), y_3-t(T+E(\sigma)+\phi(x, \sigma)))\, a_0(x, \sigma) \,dx\right|,
\end{equation}
where $a_0$ is  again a smooth non-negative function supported in a
sufficiently small neighborhood of the origin and 
\begin{equation}\label{phase10}
\phi(x_1, x_2, \sigma)=b(x_1, x_2, \sigma)(x_2-x_1^m\omega(x_1, \sigma))^2+x_1^3\beta(x_1, \sigma), 
\end{equation}
and where $\al(\si)=(\al_1(\si),\al_2(\si))$ and $E(\si)$ are smooth functions of $\si$ such that 
$\al(0)=0$ and $1/2\le E(\si)\le 2,$ and where  $\om(x_1,0)=0.$

For the  proof of Theorem \ref{nondegairypar}, we shall thus assume that $\M^\si_T$ is given by \eqref{maxnonaipar}.  The proof will be  based on suitable dyadic  frequency space decompositions, also with respect to the distance to certain  ``Airy cones'' associated with our phase functions, and the study of the corresponding frequency-localized maximal operators.
For the sake of simplicity of notation, we shall usually suppress  the superscript  $\si$ in the proof.

\subsection{Dyadic decomposition with respect to the distance to the Airy cone}\label{airydecomp}

Note that the maximal operator $\M=\M^\si_T$ is associated to  the  averaging  operators given by  convolutions with dilates  of a measure $\mu$ whose Fourier transform at $\xi=(\xi_1,\xi_2,\xi_3)$  is given  by

\begin{equation}\label{fourtrans}
\hat\mu(\xi) :=\int e^{-i\big(\xi_1(x_1+\al_1(\sigma))+\xi_2(x_2+\al_2(\sigma))+\xi_3(T+E(\si)+\phi(x_1,x_2, \sigma))\big)} a_0(x_1,x_2, \sigma)\,dx_1 dx_2.
\end{equation}

In order to estimate this maximal operator, we shall perform a dyadic decomposition with respect to the last variable $\xi_3.$  The low frequency part is easily controlled by the Hardy-Littlewood maximal operator.   

 Let us  thus denote by $\la\ge2$  a sufficiently large dyadic number, and  decompose  $\hat \mu$  into  
\begin{equation}\nonumber
    \widehat{\mu^\la}(\xi) :=\chi_0\left(\frac{\xi_1}{\lambda},\,\frac{\xi_2}{\lambda} \right) \chi_1\left(\frac{\xi_3}{\lambda} \right)\hat\mu(\xi).
\end{equation}
and 
$$
\left(1-\chi_0\left(\frac{\xi_1}{\lambda},\,\frac{\xi_2}{\lambda} \right)\right)\chi_1\left(\frac{\xi_3}{\lambda} \right)\hat\mu(\xi),
$$
where $\chi_0$ and $\chi_1$ are again  smooth functions with sufficiently small compact supports, and $\chi_0$ is  identically $1$ on a small neighborhood of the origin, whereas  $\chi_1$ vanishes near the origin and is identically one  near $1.$

Notice that if we choose the support of $a_0$ sufficiently small, then integrations by parts easily show that the latter contribution is of order $O(\la^{-N})$ for every $N\in\NN$  as $\la\to+\infty, $ so that the corresponding contribution to the maximal operators is under control. 

It therefore suffices to control the contribution by $\mu^\la.$ 
Following  \cite{IMmon} we write
\begin{equation}\label{chv}
\xi_3=\la s_3,\quad \xi_1=\la s_3s_1,\quad \xi_2=\la s_3s_2,
\end{equation}
and put $s':=(s_1,s_2), s:=(s',s_3).$
Then we have 
$$
|s_3|\sim 1\quad \mbox{and}\quad  |s'|\ll1
$$ 
on the support of $\widehat{\mu^\la}.$  
Moreover,  writing 
$$
\xi_1x_1+\xi_2x_2+\xi_3\phi(x_1,\,x_2, \sigma)=: \la s_3\Phi(x,s', \sigma),
$$
where
$$
\Phi(x,s', \sigma):=s_1x_1+s_2x_2+\phi(x, \sigma),
$$
and putting 
$$\Gamma(\si):=(\al_1(\si),\al_2(\si),T+E(\si)),
$$ 
 we may re-write  
\begin{equation}\label{mulaj}
 \widehat{\mu^\la}(\xi) =e^{-i\xi\cdot\Gamma(\si)} \chi_0(s_3s')\chi_1(s_3) J(\la,s,\si),
\end{equation}
where $J(\la,s,\si)$ denotes the oscillatory integral
\begin{equation}\nonumber
J(\la,s,\si):=\int_{\bR^2} e^{-i\la s_3\Phi(x,s',  \sigma)} a_0(x, \sigma)\, dx.
\end{equation}
\smallskip

In view of \eqref{phase10}, we perform  the change of variables $x_2\mapsto x_2+x_1^m\omega(x_1, \sigma)$ and obtain
\begin{equation}\label{shosin}
J(\la,s,\si)=\int_{\bR^2} e^{-i\la s_3\Phi_1(x, s', \sigma)} a_0(x_1,\,x_2+x_1^m\omega(x_1, \sigma))\,dx,
\end{equation}
where
$$
\Phi_1(x, s', \sigma):=b(x_1,\,x_2+x_1^m\omega(x_1, \sigma),\si)x_2^2+s_2x_2+s_1x_1+s_2x_1^m\omega(x_1, \sigma)+x_1^3\beta(x_1, \sigma).
$$
Applying the method of  stationary phase to the integration in  $x_2,$ we next  obtain that
$$
J(\la,s,\si)=\la^{-1/2} \int_{\bR} e^{-i\la s_3\tilde \Psi(x_1,s',\sigma)} \tilde a(x_1, \sigma)\,dx_1+r(\la,s),
$$
where $\tilde a$ is another smooth bump function supported in a sufficiently small neighborhood of  the origin,  $r(\la,s)$ is a remainder term of order  
$$
r(\la,s)=O(\la^{-\frac32})\quad \mbox{as} \quad  \la\to+\infty,
$$ 
and the new phase $\tilde\Psi$ is given by  
$$
\tilde\Psi(x_1,s',\si):=\Phi_1(x_1,x_2^c(x_1, s_2, \sigma),s', \sigma),
$$
where $x_2^c(x_1, s_2, \sigma)$ denotes  the unique (non-degenerate) critical point  of the  phase $\Phi_1$ with respect to $x_2.$

The contribution of the error term $r(\la,s)$ to $\widehat{\mu^\la}$ and the corresponding maximal operator is easily estimated, and we shall therefore henceforth ignore it.

\medskip
In order to understand $\tilde\Psi,$ we need more information on the critical point $x_2^c(x_1, s_2, \sigma)$.
Note first that the critical point must be of the form $x_2^c(x_1, s_2, \sigma)=s_2w(x_1, s_2, \sigma),$ where, due to \eqref{eqnew1},  the function  $w$ is smooth and satisfies the condition $w(x_1,s_2,0)=-\frac1{2 a_2}\neq 0$.
Hence we have
$$
\tilde\Psi(x_1,s',\si)=s_2^2b_1(x_1, s_2, \sigma)+s_1x_1+s_2x_1^m\omega(x_1, \sigma)+x_1^3\beta(x_1, \sigma),
$$
 where $b_1$ is another smooth function, with  $b_1(x_1,s_2, 0)=-\frac1{4 a_2}\neq 0$.

Following  \cite{IMmon} we consider the equation
$$
\pa_{x_1}^2\tilde\Psi(x_1,s',\si)=0.
$$
    By the implicit function theorem,  we see in a similar way as before  that it has a solution  of the form $x_1^c(s_2, \sigma)=s_2G_1(s_2, \sigma)$, where $G_1$ is a smooth function such that $G_1(s_2,0)=0.$ 

   By performing finally the  translation  of the $x_1$-coordinate  $x_1\mapsto x_1+x_1^c(s_2, \sigma),$ we see that we may write (up to an error term which, as mentioned before,  can be ignored) 
   \begin{equation}\label{Jla2}
J(\la,s,\si)=\la^{-1/2} \int_{\bR} e^{-i\la s_3\Psi(x_1,s',\sigma)} a(x_1, s_2,\sigma)\,dx_1,
\end{equation}
with a smooth amplitude $a$ which has similar properties like $\tilde a,$ and 
where the new phase $\Psi$ is of the form
\begin{equation}\label{phpsi}
  \Psi(x_1,s', \sigma)=B_0(s',\sigma)+B_1(s', \sigma)x_1+x_1^3B_3(x_1,\,s_2, \sigma),
\end{equation}
where $B_0,\,B_1,\,B_3$ are smooth functions with $B_3(0, 0, 0)\neq0$. One easily checks that the functions $B_1,\,B_0$ have the forms

    \begin{equation}\label{Bjs}
  B_0(s', \sigma)=s_2^2\tilde
    b_1(s_2, \sigma)+s_1s_2G_1(s_2, \sigma),\quad
    B_1(s', \sigma)=s_1-s_2^{m_1}G_3(s_2, \sigma),
\end{equation}
where $\tilde b_1$ and $G_3$ are smooth functions with $\tilde
b_1(0,0)=b_1(0,0,0)\neq0$ and  $G_3(s_2,0)=0,$ and where $m_1$ is an integer  $m_1\ge2.$ 
    
Following  \cite{IMmon} (cf. Chapter 5), we next perform a dyadic frequency decomposition of $\mu^\la$ with respect to the ``Airy cone'' given by $B_1(s',\si)=0.$ 

 More precisely, we choose smooth cut-off functions $\chi_0$ and $\chi_1$ such that   $\chi_0=1$ on a sufficiently large neighborhood of the origin, and 
$\chi_1(t)$ is supported  where $|t|\sim 1$ and $\sum_{k\in \bZ} \chi_1(2^{-2k/3}t)=1$ on $\RR\setminus \{0\},$ and define the functions $\mu_{Ai}^\la$ and $\mu_{k}^\la$ by 
\begin{eqnarray*}  
\widehat{\mu_{Ai}^\la}(\xi)&:=&\chi_0\Big(\la^{\frac23}B_1(s',\si)\Big)\, \widehat{\mu^\la}(\xi),\\  
\index{nudelai@$\mu_{Ai}^\la$}    \index{nudell@$\mu_{\de,l}^\la$}
 \widehat{\mu_{k}^\la}(\xi)&:=&\chi_1\Big((2^{-k}\la)^{\frac23}B_1(s',\si)\Big)\, \widehat{\mu^\la}(\xi), \qquad M_0\le  2^k\le \frac \la {M_1},  
\end{eqnarray*}
so that 
\begin{equation}\label{ai5+}
\mu^\la= \mu_{Ai}^\la+\sum_{M_0\le  2^k\le \frac \la {M_1}}\mu_{k}^\la.
\end{equation}
Here, we may assume that $M_0$ and $M_1$ are sufficiently large positive numbers.
 We shall separately   estimate the contributions $\M_{Ai}^\la$ by $\mu_{Ai}^\la$   and $\M_k^\la$  by  the  $\mu_{k}^\la$ to our maximal operator.   More precisely, we shall prove the following lemma:
 \begin{lemma}\label{Mlaest}
 Let $1<p\le 2.$ Then for every  $\de >0,$ we have 
 \begin{equation}\label{maxaies1}
\|\M_{Ai}^\la \|_{L^p\mapsto L^p}
 \le C_{p,\de}\,T^{\frac 1p+\de} \la^{2(\frac 1p -\frac 23) +\de(\frac 2p -1)}
\end{equation}
and 
\begin{equation}\label{maxkest1}
\|\M_{k}^\la \|_{L^p\mapsto L^p}
 \le C_{p,\de} \,T^{\frac 1p+\de} 2^{k(\frac 1p -\frac 23) -\de(\frac 2p -1)}\,\la^{2(\frac 1p -\frac 23) +\de(\frac 2p -1)},
\end{equation}
where the constant $C_{p,\de}$ does not depend on $\si,T $ and $\la.$ 
\end{lemma}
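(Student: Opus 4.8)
The plan is to derive both estimates by interpolating a clean $L^2$-bound against a near-$L^1$-bound, along the lines of Proposition~\ref{simple} and Lemma~\ref{maxla}. Write $\M_{Ai}^\la$ (resp.\ $\M_k^\la$) for the maximal operator built from the dilates of $\mu_{Ai}^\la$ (resp.\ $\mu_k^\la$); these are complex measures which, up to a rapidly decaying tail, are supported in a $\sim 1\times1\times1$ cuboid $A$ sitting at distance $\sim T$ from the origin (recall $\phi(0,\si)=0$, $\nabla\phi(0,\si)=0$ and $1/2\le E(\si)\le 2$), and whose Fourier transforms live where $|\xi_3|\sim\la$ and $|\xi_1|,|\xi_2|\lesssim\la$.

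I would first record the relevant Fourier decay. Starting from \eqref{Jla2}--\eqref{Bjs}, which (modulo the harmless error $r(\la,s)=O(\la^{-3/2})$) exhibits $\widehat{\mu^\la}$ as $\la^{-1/2}$ times a one-dimensional oscillatory integral with the Airy-type phase $\Psi=B_0+B_1x_1+x_1^3B_3$, $B_3(0,0,0)\ne 0$, I would argue exactly as in Chapter~5 of \cite{IMmon}: on $\supp\widehat{\mu_{Ai}^\la}$ one has $|B_1|\lesssim\la^{-2/3}$, so van der Corput of order $3$ (Lemma~2.1(a) in \cite{IMmon}) bounds the $x_1$-integral by $\lesssim\la^{-1/3}$; on $\supp\widehat{\mu_k^\la}$ one has $|B_1|\sim(2^k/\la)^{2/3}$, the phase has two nondegenerate critical points near $\pm|B_1|^{1/2}$, at distance $\gtrsim(2^k/\la)^{1/3}$ apart and with second derivative of size $\sim(2^k/\la)^{1/3}$ there, so the method of stationary phase bounds the $x_1$-integral by $\lesssim(\la^{2/3}2^{k/3})^{-1/2}$. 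This is uniform in $\si$ (and in the base point, as in Remark~\ref{notorigin}), since the reductions of Lemma~\ref{simpar} and of \eqref{phpsi}--\eqref{Bjs} are. Hence $\|\widehat{\mu_{Ai}^\la}\|_\infty\lesssim\la^{-5/6}$ and $\|\widehat{\mu_k^\la}\|_\infty\lesssim\la^{-5/6}2^{-k/6}$. Running the $L^2$-argument of Proposition~\ref{simple} (write $t=t'2^{-j}$ with $t'\in[1,2[$, pass to local maximal operators, apply Plancherel together with a Sobolev embedding in $t'$, noting that $\pa_t$ of the complete phase produces the factor $\xi\cdot\Ga(\si)$ of size $\sim\la T$, and sum in $j$ via Littlewood--Paley orthogonality since $|\xi_3|\sim\la 2^j$ there) yields $\|\M_{Ai}^\la\|_{2\to2}\lesssim(\la T)^{1/2}\|\widehat{\mu_{Ai}^\la}\|_\infty\lesssim T^{1/2}\la^{-1/3}$ and, likewise, $\|\M_k^\la\|_{2\to2}\lesssim T^{1/2}2^{-k/6}\la^{-1/3}$, which in particular establish \eqref{maxaies1}, \eqref{maxkest1} at $p=2$.

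For the low-exponent endpoint I would not pass through $L^1$ (where a crude Hardy--Littlewood domination would cost the full $R^n=T^3$, since $A$ lies at distance $\sim T$), but invoke Proposition~\ref{maxproj}. Up to negligible tails, $|\mu_{Ai}^\la|\le\|\mu_{Ai}^\la\|_\infty\mathbf{1}_A$, so $\M_{Ai}^\la f\lesssim\|\mu_{Ai}^\la\|_\infty\,\M_{-A}|f|$, and with $R\sim T$ and solid angle $|\pi(A)|\sim T^{-2}$ Proposition~\ref{maxproj} gives $\|\M_{Ai}^\la\|_{p_1\to p_1}\le C_{p_1}\,T^3\cdot T^{-2}\|\mu_{Ai}^\la\|_\infty=C_{p_1}T\|\mu_{Ai}^\la\|_\infty$ for every $p_1>1$, and likewise for $\M_k^\la$. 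Quoting the companion $L^\infty$-bounds $\|\mu_{Ai}^\la\|_\infty\lesssim\la^{2/3}$ and $\|\mu_k^\la\|_\infty\lesssim 2^{k/3}\la^{2/3}$ (which follow from the same kind of analysis as in \cite{IMmon}; the cubic term $x_1^3B_3$ flattens the surface near the Airy cone and thereby lowers these sup-norms below the generic value $\la$), one gets $\|\M_{Ai}^\la\|_{p_1\to p_1}\le C_{p_1}T\la^{2/3}$ and $\|\M_k^\la\|_{p_1\to p_1}\le C_{p_1}T2^{k/3}\la^{2/3}$. Interpolating these with the $L^2$-bounds, for $1<p\le 2$ and $p_1>1$ close enough to $1$ (depending on $p$ and $\de$), gives $\|\M_{Ai}^\la\|_{p\to p}\lesssim(T\la^{2/3})^{\frac 2p-1+\ve}(T^{1/2}\la^{-1/3})^{2-\frac 2p-\ve}$ and the analogous bound for $\M_k^\la$ with $T\la^{2/3}$ replaced by $T2^{k/3}\la^{2/3}$ and $T^{1/2}\la^{-1/3}$ by $T^{1/2}2^{-k/6}\la^{-1/3}$, where $\ve\to0$ as $p_1\to 1$. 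Expanding the exponents reproduces $T^{\frac 1p+O(\ve)}\la^{2(\frac 1p-\frac 23)+O(\ve)}$ and $T^{\frac 1p+O(\ve)}2^{k(\frac 1p-\frac 23)+O(\ve)}\la^{2(\frac 1p-\frac 23)+O(\ve)}$, and taking $\ve$ small in terms of $p,\de$ absorbs the $O(\ve)$'s into the $\de$'s of \eqref{maxaies1}, \eqref{maxkest1}; all constants are independent of $\si,T,\la$.

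The main obstacle is the first step together with its companion physical-space sup-norm bounds: one has to carry the Airy normal-form reduction --- Lemma~\ref{simpar}, then stationary phase in $x_2$ and the $x_1$-translation leading to \eqref{phpsi}--\eqref{Bjs} --- through \emph{uniformly} in the parameters $\si$ (and as the base point ranges at distance $\lesssim 1$), and then extract from the cubic term $x_1^3B_3$ both the decay of $\widehat{\mu_{Ai}^\la},\widehat{\mu_k^\la}$ and the flattening that governs $\|\mu_{Ai}^\la\|_\infty,\|\mu_k^\la\|_\infty$. Once these uniform estimates are secured, the remaining ingredients --- Littlewood--Paley, the Sobolev embedding for the local maximal operators, Proposition~\ref{maxproj}, and Marcinkiewicz interpolation --- are routine.
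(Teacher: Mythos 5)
Your $L^2$ part and the overall interpolation architecture match the paper's proof exactly, and the Fourier-decay estimates $\|\widehat{\mu_{Ai}^\la}\|_\infty\lesssim\la^{-5/6}$ and $\|\widehat{\mu_k^\la}\|_\infty\lesssim\la^{-5/6}2^{-k/6}$ are correct. The gap is in the near-$L^1$ endpoint. You assert $\|\mu_{Ai}^\la\|_\infty\lesssim\la^{2/3}$ and $\|\mu_k^\la\|_\infty\lesssim 2^{k/3}\la^{2/3}$ and combine these with a unit-cuboid support through Proposition~\ref{maxproj}. These pointwise bounds are false: after the change of variables \eqref{chv}, stationary phase in $s_2$, and passage to \eqref{Phi3AI}, one has
$$
\mu^\la_I(y+\Ga(\si))=\la\int e^{-i\la s_3\Phi_3(\la^{-2/3}z,y,\si)}\chi_0(z,s_3)\chi_1(s_3)a\,dz\,ds_3,
$$
and at a point $y$ with $y_1=\vp(y_2,\si)$, $y_3=\psi(y_1,y_2,\si)$ the phase $\la\Phi_3$ vanishes identically, so $|\mu_{Ai}^\la|\sim\la$ there; similarly $\|\mu_k^\la\|_\infty\sim\la$ from \eqref{muAk4} after van der Corput in $z$. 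The heuristic you give --- that the cubic term ``flattens the surface near the Airy cone and thereby lowers these sup-norms'' --- is not what happens: the flattening does not reduce the peak value, it concentrates the mass onto a thin neighborhood of a curve. With the correct sup-norm $\sim\la$ and a unit-cuboid support, your near-$L^1$ bound becomes $T\la$ rather than $T\la^{2/3}$, and interpolating $T\la$ against $T^{1/2}\la^{-1/3}$ gives $T^{1/p}\la^{2/p-1}$, which for $p$ near $3/2$ has a non-negative $\la$-exponent, so the sum over $\la$ diverges and the conclusion of the lemma is lost.

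What the paper actually does at the near-$L^1$ endpoint is to accept $\|\mu_{Ai}^\la\|_\infty\lesssim\la$ but show, by iterated integration by parts in $z$ and in $s_3$ against the phase \eqref{Phi3AI}, that $\mu_{Ai}^\la(\cdot+\Ga(\si))$ is (up to $O(\la^{-N})$ tails and an $O(1)$ piece) supported in
$$
B_\de=\{|y|\lesssim 1,\ |y_1-\vp(y_2,\si)|<\la^{\de-\frac13},\ |y_3-\psi(y_1,y_2,\si)|<\la^{\de-1}\},
$$
whose translate $A_\de=\Ga(\si)+B_\de$ has spherical projection $|\pi(A_\de)|\lesssim T^{-2}\la^{\de-\frac13}$. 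Feeding \emph{this} into Proposition~\ref{maxproj} yields $\|\M_{Ai}^\la\|_{L^{1+\ve}\to L^{1+\ve}}\lesssim T\la^{\frac23+\de}$, recovering the extra $\la^{-1/3}$ that your sup-norm claim tried to produce for free; the analogous analysis for $\mu_{k,\infty}^\la$ and $\mu_{k,1}^\la$ (with $|y_1-\vp|<2^{-2k/3}\la^{\de-1/3}$, resp.\ $|y_1-\vp|<q^{1-\de}$, $|y_3-\psi|<q^{3-2\de}$) gives $\|\M_k^\la\|_{L^{1+\ve}\to L^{1+\ve}}\lesssim T\la^{\frac23+\de}2^{k(\frac13-\de)}$. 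The physical-space localization to a tubular neighborhood of the critical curve is therefore the essential content of the proof; a uniform $L^\infty$ bound on a unit cube cannot replace it.
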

If $p>3/2,$ we see that these estimates sum in $k$ as well as over all dyadic $\la\gg1,$ provided we choose $\de$ sufficiently small. This will then complete the proof of Theorem \ref{nondegairypar}.

\subsection{The contribution by the region near the Airy cone}\label{nearairy}
In this subsection, we shall prove estimate \eqref{maxaies1}.
To this end, we write, according to \eqref{mulaj},  
$$
 \widehat{\mu^\la_{Ai}}(\xi) =e^{-i\xi\cdot\Gamma(\si)}  \chi_0\Big(\la^{\frac23}B_1(s',\si)\Big)\, \chi_0(s_3s')\chi_1(s_3) J(\la,s).
 $$
Then,  by \eqref{Jla2}, we may apply  Lemma 2.2 (a) (with $B=3$) in Chapter 2  of \cite{IMmon} to $J(\la,s)$ and see that we may write
$$
 \widehat{\mu^\la_{Ai}}(\xi)=e^{-i\xi\cdot\Gamma(\si)} \, \la^{-\frac 56} \chi_0\Big(\la^{\frac23}B_1(s',\si)\Big)\, g(\la^\frac23B_1(s',\si), \la,  \sigma,s)\chi_0(s_3s')\chi_1(s_3)\, e^{-i\la s_3B_0(s', \sigma)},
$$
where $g(u, \la,  \sigma,s)$ is a smooth function of $(u, \la,  \sigma,s)$ whose derivatives of any order are uniformly bounded on its natural domain $|u|\lesssim1, |\si|\lesssim1,\la\ge2$ and 
$|s_3|\sim 1, \,  |s'|\ll1.$ 

\smallskip

Notice also that a first order $t$-derivative of $e^{-i t \xi\cdot\Gamma(\si)},$  as well as of  
$e^{-i\la ts_3B_0(s', \sigma)},$ produces additional factors of order $O(T\la),$ since 
$|\Gamma(\si)|\sim T,$ and thus following again the arguments in the proof of Proposition \ref{simple} we easily see that 
\begin{equation}\label{maxai}
\|\M^\la_{Ai}\|_{L^2\to L^2}\lesssim T^{\frac 12}\la^{\frac 12-\frac56}= T^{\frac 12}\la^{-\frac13}.
\end{equation}

Next, we we shall control  the  function $\mu^\la_{Ai}(y).$ By Fourier inversion,  changing variables as in \eqref{chv}, we may write
 \begin{eqnarray}\nonumber
\mu^\la_{Ai}(y+\Gamma(\si))&=&\la^\frac{13}6\int \chi_0\Big(\la^{\frac23}B_1(s',\si)\Big)
g(\la^\frac23B_1(s',\si), \la,  \sigma,s)\chi_0(s_3s')\chi_1(s_3) \\
&&\hskip5cm e^{-i\la s_3 \big(B_0(s', \sigma)-s_1y_1-s_2y_2-y_3\big)}s_3^2\, ds. \label{measai}
\end{eqnarray}

Observe first that when $|y|\gg 1,$ then we easily obtain by means of integrations by parts that
\begin{equation}\nonumber 
|\mu_{Ai}^\la(y)|\le C_N \la^{-N}, \quad N\in\NN, \mbox { if } |y|\gg 1.
\end{equation}
Indeed, when $|y_1|\gg 1,$ then we integrate by parts repeatedly in $s_1$ to see this (in each integration by parts, we gain a factor $\la^{-1}$ and loose at most a factor of $\la^{2/3}$),  and a similar argument applies when $|y_2|\gg1,$ where we use the $s_2$-integration.  Finally,  when $|y_1|+|y_2|\lesssim 1$ and $|y_3|\gg 1,$ then we can integrate by parts in $s_3$ in order to establish this estimate.
\medskip

We may therefore assume in the sequel  that $|y|\lesssim 1.$ 
In view of \eqref{Bjs}, we  then perform yet another change of coordinates from $s_1$ to 
$$z:=\la^\frac23B_1(s',\si)=\la^\frac23(s_1-s_2^{m_1}G_3(s_2, \sigma)),\mbox{ i.e.,} \ 
s_1=s_2^{m_1}G_3(s_2, \sigma)+\lambda^{-\frac{2}{3}}z.
$$
Note that  by \eqref{Bjs},  the function $B_0(s', \si)$ is  then given by 
$$
B_0(s', \si)=s_2^2G_5(s_2, \si)+s_2G_1(s_2, \si)\la^{-\frac 23}z,
$$
where $G_5(s_2,\si):=\tilde b_1(s_2, \si)+s_2^{m_1-1}G_1(s_2, \si)G_3(s_2, \si)$ is smooth and  where we may assume that $G_5(0,0)\ne 0,$ since  $m_1\ge2$ and $G_1(s_2,0)=0.$ 
We may thus re-write 
\begin{eqnarray}\nonumber
\mu_{A_i}^\lambda(y+\Ga(\si))&=&\lambda^\frac32\int \chi_0(z) \tilde g(z, \la,  \sigma,s_2,s_3)\chi_0(z,s_2,s_3)\chi_1(s_3)\\
&& \hskip4cm e^{-i\lambda s_3\Phi_2(z, s_2, y, \sigma)}ds_2ds_3dz, \label{muAi3}
\end{eqnarray}

where $\tilde g$  has similar properties to $g,$ and where the phase $\Phi_2$ is of the form
\begin{eqnarray*}
\Phi_2(z, s_2, y,\si)&=&s_2^2G_5(s_2, \sigma)-s_2^{m_1}G_3(s_2, \sigma)y_1 -s_2y_2-y_3 \\
&&\hskip4cm +\lambda^{-\frac{2}{3}}z(s_2G_1(s_2, \sigma)-y_1),
\end{eqnarray*}
with  $G_5(0,0)\ne 0$ and  $G_1(s_2,0)=G_3(s_2,0)=0$ (compare this with the analogous formula (5.21) in \cite{IMmon}). Recall also   that $|z|\lesssim 1, |s_2|\ll 1$ and $|s_3|\sim 1.$

Decomposing 
\begin{equation}\label{G1decomp}
G_1(s_2,\si)=G_{1,1}(\si)+s_2 G_{1,2}(s_2, \si),
\end{equation}

where $G_{1,1}(0)=0$ and $G_{1,2}(s_2,0)=0,$ 
we finally  may re-write 
\begin{eqnarray}\nonumber
\Phi_2(z, s_2, y,\si)&=&s_2^2G_6(\lambda^{-\frac{2}{3}}z,s_2, \sigma)
-s_2\big(y_2-\lambda^{-\frac{2}{3}}z\,G_{1,1}(\sigma)\big)\\
&&\hskip3cm  -s_2^{m_1}G_3(s_2, \sigma)y_1  -\lambda^{-\frac{2}{3}}zy_1 -y_3  , \label{Phi1}
\end{eqnarray}
where $G_6(0,0,0)\ne 0.$

\smallskip
Assuming $\si$ to be sufficiently small, we thus see that $\Phi_2$ has a unique critical point $s_2^c$ with respect to $s_2,$ of size $|y_2|,$ provided $|y_2|\ll 1.$ Otherwise, iterated integrations by parts in $s_2$ will show that $|\mu_{A_i}^\lambda(y)|\lesssim \la^{-N}$ for every $N\in\NN,$ so that those $y's$ can be ignored. More precisely, from \eqref{Phi1}  we deduce that $s_2^c$ is of the form
$$
s_2^c=\big(y_2-\lambda^{-\frac{2}{3}}z\,G_{1,1}(\sigma)\big) H_1(\lambda^{-\frac23}z, y_1, y_2, \si).
$$
After applying the method of  stationary phase in the $s_2$-variable, we thus find that
$$
\mu^\la_{Ai}(y+\Ga(\si))=\la\int e^{-i\la s_3
\Phi_3(\lambda^{-\frac23}z, y, \si)} \chi_0(z,s_3)\chi_1(s_3)a(\la^{-\frac23} z, y,  s_3,\sigma)\, dzds_3+O(1),
$$
with a smooth amplitude $a,$ and where the phase  $\Phi_3$ is of the form
$$
\Phi_3(\lambda^{-\frac23}z, y, \si)=\big(y_2-\lambda^{-\frac{2}{3}}z\,G_{1,1}(\sigma)\big)^2  H_2(\lambda^{-\frac23}z, y_1, y_2, \si)-\lambda^{-\frac23}zy_1-y_3,
$$
with a smooth function $H_2$ such that $H_2(0,0,0,0)\ne 0.$ Notice that the functions $\chi_0$ and $\chi_1$ may possibly be different in different places.

\bigskip

A Taylor series expansion of $H_2$  with respect to $v=\lambda^{-\frac23}z$ then allows to write
\begin{eqnarray}\nonumber
\Phi_3(\lambda^{-\frac23}z, y, \si)&=&y_2^2H_2(0, y_1, y_2, \si)+
\la^{-\frac{2}{3}}z\big(-2y_2G_{1,1}(\si)H_2(0, y_1, y_2, \si)\\
&+&y_2^2 \pa_vH_2(0, y_1, y_2, \si)-y_1\big)-y_3+O(\la^{-4/3}).\label{Phi3Ai2}
\end{eqnarray}
The factor $e^{-i\la s_3O(\la^{-4/3})}$ corresponding to the term $O(\la^{-4/3})$ can be included into the amplitude, and thus we may assume that the complete phase is  of the form
$$
\la s_3\Phi_3=\lambda^{\frac13}s_3z(-y_2H_3(y_1, y_2, \si)+y_2^2H_4(y_1, y_2, \si)-y_1)
+\la
s_3(y_2^2H_2(0, y_1, y_2, \si)-y_3).
$$
By the implicit function theorem, we may re-write the first factor in parentheses in the form
$$
-y_2H_3(y_1, y_2, \si)+y_2^2H_4(y_1, y_2, \si)-y_1=(y_1-\vp(y_2, \si))H_5(y_1, y_2, \si),
$$
where $\vp(y_2, \si)$ and   $ H_5(y_1, y_2, \si)$ are smooth functions with
$\varphi(0, 0)=0$ and $H_5(0, 0, 0)\neq 0$. We shall also write $\psi(y_1,y_2,\si):=
y_2^2H_2(0, y_1, y_2, \si).$ Then also $\psi$ is smooth and $\psi(0,0,0)=0.$

Thus, eventually we may write 
$$\mu^\la_{Ai}(y+\Ga(\si))=\mu^\la_{I}(y+\Ga(\si))+\mu^\la_{II}(y+\Ga(\si)),$$
 where $\mu^\la_{II}(y+\Ga(\si))=O(1),$ and 
\begin{equation}\label{Phi3AI}
\mu^\la_{I}(y+\Ga(\si)):=\la\int e^{-i\la s_3
\Phi_3(\lambda^{-\frac23}z, y, \si)} \chi_0(z,s_3)\chi_1(s_3)a(\la^{-\frac23} z, y,  s_3,\sigma)\, dzds_3,
\end{equation}
with 

$$
\la \Phi_3(\lambda^{-\frac23}z, y, \si)=\lambda^{\frac13}z\,\big(y_1-\vp(y_2, \si))H_5(y_1, y_2, \si)
-\la \big(y_3-\psi(y_1,y_2,\si)\big).
$$

\smallskip

The contribution of $\mu^\la_{II}$ to the maximal operator is of order $O(T),$ as can easily be seen by comparison  with the Hardy-Littlewood maximal operator (or by Poposition \ref{maxproj}).

As for $\mu^\la_{I},$ fix a sufficiently small number $\de>0.$ If  $\la^\frac13|y_1-\varphi(y_2, \si)|\ge c\la^\delta$, where $c>0$ is assumed to be a sufficiently small constant, then we can use iterated integrations by parts in $z$ to  obtain that $|\mu^\la_{Ai}(y+\Ga(\si))|\lesssim \la^{-N}$  for every $N,$ uniformly in $\si.$  Similarly, if $\la^\frac13|y_1-\varphi(y_2, \si)|< c\la^\delta$ and $\la|y_3-\psi(y_1,y_2,\si)|\ge \la^\delta$, then we can use integrations by parts in $s_3$ to arrive at the same type of estimate. 
\color{black}

The contributions by these regions to our maximal operator are thus  negligible.

Let us next put 
\begin{eqnarray*}
B_\de:=\{y\in\RR^3: |y|\lesssim 1,\,  |y_1-\varphi(y_2, \si)|< \la^{\delta-\frac 13}, \, 
 |y_3-\psi(y_1,y_2,\si)|<\la^{\de-1}\}.
\end{eqnarray*}
Writing $x=y+\Ga(\si),$ we are  thus  reduced to concentrating for $\mu^\la_{Ai}(x)$ on the  small $x$- region $A_\de:=\Ga(\si)+B_\de,$ on which we only have the estimate $|\mu^\la_{Ai}(x)|\lesssim \la.$ 
But recall  that $\Ga(\si)=(\al_1(\si),\al_2(\si),T+E(\si)),$ with $|\al_1(\si)|,|\al_2(\si)| \ll 1,$ $1/2\le E(\si)\le 2$  and $T\gg1.$
 It is then  obvious that the spherical projection $\pi(A_\de)\subset S^2$ of $A_\de$ has measure $|\pi(A_\de)|\lesssim T^{-2}\la^{\de-\frac13},$ since $\la^{\de-1}\ll\la^{\delta-\frac 13},$ 
so that Proposition \ref{maxproj} (with $R\sim T$)  shows that,  for every $\ve>0,$ $\M_{Ai}^\la$ satisfies the  estimate

\begin{equation}\label{maxaione}
\|\M_{Ai}^\la \|_{L^{1+\ve}\mapsto L^{1+\ve}}\le C'_{\ve,\de}T^{3}T^{-2}\la \cdot \la^{\de-\frac 13}+ O(T) \le  C_{\ve,\de} T\la^{\frac23+\de},
\end{equation}
for every  $\de>0.$

The estimate \eqref{maxaies1} in Lemma \ref{Mlaest} now follows from  the estimates \eqref{maxai}
 and \eqref{maxaione} by real interpolation (with a slightly bigger $\de$ than the one considered here), if we choose $\ve$ sufficiently small.

\subsection{The contribution by the region away from  the Airy cone}\label{awayairy}
According to \eqref{mulaj},  we have 
$$
 \widehat{\mu^\la_{k}}(\xi) =e^{-i\xi\cdot\Gamma(\si)} \chi_1\Big((2^{-k}\la)^{\frac23}B_1(s',\si)\Big)\,  \chi_0(s_3s')\chi_1(s_3) J(\la,s).
 $$
By \eqref{Jla2} (ignoring again the error term $r(\la,s)$), this can be re-written as

\begin{equation}\label{measail}
\widehat{\mu^\la_{k}}(\xi) =\la^{-\frac12} e^{-i\xi\cdot\Gamma(\si)} \chi_1\Big((2^{-k}\la)^{\frac23}B_1(s',\si)\Big)\,  \chi_0(s_3s')\chi_1(s_3)e^{-i\la
s_3B_0(s', \si)} \tilde J(\la,s,\si),
\end{equation}
where
$$
\tilde J(\la, s,\si):=\int_{\bR} e^{-i\la s_3\tilde \Psi(x_1,s',\sigma)} a(x_1, s_2,\sigma)\,dx_1,
$$
with phase function
$$
\tilde \Psi(x_1,\,s', \si):=B_1(s', \sigma)x_1+x_1^3B_3(x_1,\,s_2, \sigma).
$$
Recall also that the amplitude $a$ is smooth and has a sufficiently small support in $x_1.$ 

Since $B_1(s',\si)$ is of size $(2^{k}\la^{-1})^\frac23$, we  scale by changing coordinates $x_1= (2^k\la^{-1})^\frac13u_1$ in the integral for $\tilde J(\la, s,\si)$ and obtain
$$
\tilde J(\la,s,\si)= (2^k\la^{-1})^\frac13\int e^{-i2^k s_3
\Psi_1(u_1,\,s', \si)}a((2^k\la^{-1})^\frac13u_1,s_2,\si)\,du_1,
$$
where
\begin{equation}\label{Psi1}
\Psi_1(u_1, s', \si):=(2^{-k}\la)^\frac23B_1(s', \si)u_1+B_3( (2^k\la^{-1})^\frac13u_1,s_2, \si)u_1^3.
\end{equation}

Observe that the coefficients of $u_1$ and $u_1^3$ in $\Psi_1$ are
of size $1$, so that  $\Psi_1$ has no critical point with respect to
$u_1$ unless $|u_1|\sim1$. Thus we may choose a smooth cut-off
function $\chi_1\in C_0^\infty(\bR)$ which vanishes near the origin so that $\Psi_1$ has no critical point on the support of the
function $1-\chi_1,$ and decompose the integral 
 $$
 \tilde J(\la,s,\si)=J_1(\la, s,\si)+J_\infty(\la, s,\si),
 $$
where
\begin{equation}\label{J1}
J_1(\la, s,\si):= (2^k\la^{-1})^\frac13\int e^{-i2^k s_3
\Psi_1(u_1,\,s', \si)}a((2^k\la^{-1})^\frac13u_1,s_2,\si) \chi_1(u_1)\,du_1
\end{equation}
and 
\begin{equation}\label{Jinfty}
J_\infty(\la, s,\si):= (2^k\la^{-1})^\frac13\int e^{-i2^k s_3
\Psi_1(u_1,\,s', \si)}a((2^k\la^{-1})^\frac13u_1,s_2,\si)(1- \chi_1(u_1))\,du_1.
\end{equation}
Accordingly we decompose the measure
$\mu_k^\la=\mu_{k, 1}^\la+\mu_{k, \infty}^\la$, where the summands
are given  by
$$
\widehat{\mu_{k,1}^\la}(\xi)=\la^{-\frac12} e^{-i\xi\cdot\Gamma(\si)} \chi_1\Big((2^{-k}\la)^{\frac23}B_1(s',\si)\Big)\,  \chi_0(s_3s')\chi_1(s_3)e^{i\la
s_3B_0(s', \si)} J_1(\la,s,\si).$$
and
$$
\widehat{\mu_{k, \infty}^\la}(\xi)=\la^{-\frac12} e^{-i\xi\cdot\Gamma(\si)} \chi_1\Big((2^{-k}\la)^{\frac23}B_1(s',\si)\Big)\,  \chi_0(s_3s')\chi_1(s_3)e^{i\la
s_3B_0(s', \si)} J_\infty(\la, s, \si).
$$
We denote by $ \M_{k,1}^\la$ and $\M_{k, \infty}^\la$ the maximal operators defined by the functions $\mu_{k,1}^\la$ and $\mu_{k,\infty}^\la,$ respectively.

\subsubsection{The  contributions given by the $\mu_{k, \infty}^\la$ }

Recall that $2^k\la^{-1}\ll 1.$ By means of integrations by parts, we then easily see  that, given any  $N\in \bN,$  we may write 
\begin{equation}\label{Jlainf1}
J_\infty(\la, s, \si)=(2^k\la^{-1})^\frac13 2^{-kN}  g_N\big((2^{-k}\la)^\frac23B_1(s', \si),2^{k}\la^{-1}, s',\si\big),
\end{equation}
where $g_N$ is smooth. This implies in particular that 
$$
|J_\infty(\la, s, \si)|\lesssim(2^k\la^{-1})^\frac13 2^{-kN},
$$
hence
$$
\|\widehat{\mu_{k, \infty}^\la}\|_\infty\lesssim
\la^{-\frac12}(2^k\la^{-1})^\frac13 2^{-kN}.
$$
Arguing as before, we thus find that 
\begin{equation}\label{l2estl}
\|\M_{k, \infty}^\la\|_{L^2\mapsto
L^2}\lesssim T^{\frac 12}(2^k\la^{-1})^\frac13 2^{-kN}
\end{equation}
for every $N\in\NN.$ 

Next, we proceed in a similar way as in the previous subsection in order to obtain $L^p$-estimates for $\M_{k, \infty}^\la$ when  $p$ is close to $1.$  By Fourier inversion and \eqref{Jlainf1}, we find that 

\begin{eqnarray*}
&&\mu_{k,\infty}^\la(y+\Ga(\si))=\la^3\int_{\bR^3} e^{i\la s_3
(s_1y_1+s_2y_2+y_3)}e^{i\xi\cdot\Gamma(\si)}\widehat{\mu_{k,\infty}^\la} (\xi)ds\\
&& \hskip-0,5cm =\la^{\frac52} \int  \chi_1\Big((2^{-k}\la)^{\frac23}B_1(s',\si)\Big)\,  \chi_0(s_3s')\chi_1(s_3)
e^{-i\la s_3\big (B_0(s', \si) -s_1y_1-s_2y_2-y_3\big)} J_\infty(\la, s, \si)\, ds,\\
&&  \hskip-0,5cm =\la^{\frac52}(2^k\la^{-1})^\frac13 2^{-kN}
 \int e^{-i\la s_3\big (B_0(s', \si) -s_1y_1-s_2y_2-y_3\big)}
\chi_1\Big((2^{-k}\la)^{\frac23}B_1(s',\si)\Big)\,  \chi_0(s_3s')\chi_1(s_3)\\
&&\hskip8cm g_N\big((2^{-k}\la)^\frac23B_1(s', \si),2^{k}\la^{-1}, s',\si\big)\, ds,
\end{eqnarray*}
where we recall that  $\xi=\la s_3(s_1, s_2, 1)$.  

Comparing this with \eqref{measai} and arguing in a similar way is we did for the estimation of $\mu_{Ai}^\la,$ we again see that we may assume that $|y|\lesssim 1,$ since for $|y|\gg 1$ integrations by parts in $s$ show that, for every $N\in \NN,$ 
\begin{equation}\nonumber 
|\mu_{k,\infty}^\la(y)|\le C_N 2^{-kN} \la^{-N}, \quad N\in\NN, \mbox { if } |y|\gg 1.
\end{equation}



Recalling \eqref{Bjs}, we next change coordinates from $s_1$ to 
$$
z:=(2^{-k}\la)^{\frac23}B_1(s_1,s_2,\si)=(2^{-k}\la)^{\frac23}(s_1-s_2^{m_1}G_3(s_2, \sigma)),
$$
i.e., 
\begin{equation}\label{changez}
s_1=s_2^{m_1}G_3(s_2, \sigma)+(2^{k}\la^{-1})^{\frac23}z, \qquad |z|\sim 1.
\end{equation}

Note that   the function $B_0(s', \si)$ is  then given by 
$$
B_0(s', \si)=s_2^2G_5(s_2, \si)+s_2G_1(s_2, \si)(2^k\la^{-1})^\frac23z,
$$
where $G_5(s_2,\si):=\tilde b_1(s_2, \si)+s_2^{m_1-1}G_1(s_2, \si)G_3(s_2, \si)$ is smooth and  where we may assume that $G_5(0,0)\ne 0,$ since  $m_1\ge2$ and $G_1(s_2,0)=0.$  We then  find that, in analogy with \eqref{muAi3}, 
\begin{eqnarray}\nonumber
&&\mu_{k,\infty}^\la(y+\Ga(\si))
 =\la^{\frac52}(2^k\la^{-1}) 2^{-kN}
 \int e^{-i\la s_3\Phi_2(z,s_2,y,\si)}
\chi_1(z) \,  \chi_0(s_3s')\chi_1(s_3)\\
&&\hskip8cm a_N\big(z,(2^{k}\la^{-1})^{\frac 13}, s_2,s_3,\si\big)\, ds_2 ds_3 d z,\label{muAk2}
\end{eqnarray}
where $a_N$ is smooth, and where the phase function $\Phi_2$ is given by 
\begin{eqnarray}\nonumber
\Phi_2(z, s_2, y,\si)&=&s_2^2G_5(s_2, \sigma)-s_2^{m_1}G_3(s_2, \sigma)y_1 -s_2y_2-y_3 \\
&&\hskip4cm +(2^k\lambda^{-1})^{\frac{2}{3}}z(s_2G_1(s_2, \sigma)-y_1).\label{Phi22}
\end{eqnarray}
Here, $G_5(0,0)\ne 0, G_1(s_2,0)=G_3(s_2,0)= 0,$ $2^k\la^{-1} \ll1,$  $|z|\sim 1, |s_2|\ll1$ 
and $|s_3|\sim 1.$ 
 \medskip
 
 Observe that, due to the presence of the factor $2^{-kN}$ in \eqref{muAk2}, we may concentrate primarily on gaining negative powers of $\la$ in the estimation of $\mu_{k,\,\infty}^\la.$ We may thus proceed very much in the same way as we did in the previous section,  replacing $v=\la^{-2/3}z$ by $v:= (2^k\la^{-1})^{2/3}z,$ and arrive at the following analogue of \eqref{Phi3AI}:
 
\begin{eqnarray}\nonumber
\mu^\la_{k,\infty}(y+\Ga(\si))&=&2^{-kN}\la\int e^{-i\la s_3
\Phi_3((2^k\la^{-1})^{\frac 23}z, y, \si)}\\
&&\hskip2cm  a_N(z,(2^k\la^{-1})^{\frac 23},s_3,y, \si)\chi_1(z)\chi_1(s_3)\, dzds_3+O(2^{-kN}), \label{Phi3k2}
\end{eqnarray}
where here $a_N$  is of the form
\begin{eqnarray*}
 a_N(z,(2^k\la^{-1})^{\frac 23},s_3,y, \si)=\tilde a_N(z,(2^k\la^{-1})^{\frac 23}, s_3,y,\sigma)\,
e^{-i\la s_3(2^k\la^{-1})^{\frac 43}\eta\big((2^k\la^{-1})^{\frac 23}z, y, \si\big)},
\end{eqnarray*}
 with a smooth function $\tilde a_N$ and a smooth,  real function  $\eta,$   and where   $\la\Phi_3$ is  of the form

$$
\la\Phi_3((2^k\la^{-1})^{\frac 23}z, y, \si)=\lambda (2^k\la^{-1})^{\frac 23}z\,\big(y_1-\vp(y_2, \si))H_5(y_1, y_2, \si)\big)
-\la \big(y_3-\psi(y_1,y_2,\si)\big),
$$

with $H_5(0,0,0)\ne 0.$ 
The oscillatory factor $e^{-i\la s_3(2^k\la^{-1})^{\frac 43}\eta((2^k\la^{-1})^{\frac 23}z, y, \si)}$ in this amplitude corresponds to the $O(\la^{-4/3})$ - term in \eqref{Phi3Ai2}. The contribution by the $O(2^{-kN})$ term is negligible, in a similar way as we saw  this for the contribution of the term $\mu^\la_{II}$ before, so that we shall from now on ignore it.

Fix again a  sufficiently small number $\de>0.$ If  $2^{\frac {2k}3}\la^\frac13|y_1-\varphi(y_2, \si)|\ge c\la^\delta$, where $c>0$ is assumed to be a sufficiently small constant,  then we can use iterated integrations by parts in $z$ to  obtain that $|\mu^\la_{k,\infty}(y+\Ga(\si))|\lesssim (2^k\la)^{-N}$  for every $N,$ uniformly in 
$\si.$ Indeed, since $\la (2^k\la^{-1})^{6/3}=2^{2k}\la^{-1}\lesssim 2^k,$  in each integration by parts, we gain a factor $\la^{-\de}$ and loose a factor $2^k$ from differentiating the amplitude, but this is acceptable. Similarly, if  $2^{\frac {2k}3}\la^\frac13|y_1-\varphi(y_2, \si)|< c\la^\delta$ and  $\la|y_3-\psi(y_1,y_2,\si)|\ge \la^\delta$, then we can use integrations by parts in $s_3$ to arrive at the same type of estimate.

The contributions by these regions to our maximal operator are thus  negligible.
\smallskip

Let us next put 
\begin{eqnarray*}
B_{k,\de}:=\{y\in\RR^3: |y|\lesssim 1,\,  |y_1-\varphi(y_2, \si)|< 2^{-\frac {2k}3}\la^{\delta-\frac 13},\,  |y_3-\psi(y_1,y_2,\si)|<\la^{\de-1}\}.
\end{eqnarray*}
As in the preceding subsection, let us write $x=y+\Ga(\si).$ We are  then  reduced to concentrating for $\mu^\la_{k,\infty}(x)$ on the  small $x$- region $A_{k,\de}:=\Ga(\si)+B_{k,\de},$ on which we  have the estimate $|\mu^\la_{k,\infty}(x)|\lesssim 2^{-kN}\la.$
Recalling again  that $\Ga(\si)=(\al_1(\si),\al_2(\si),T+E(\si)),$ with $|\al_1(\si)|,|\al_2(\si)| \ll 1,$ $1/2\le E(\si)\le 2$  and $T\gg1,$ we then see that the spherical projection  $\pi(A_{k,\de})\subset S^2$ of $A_{k,\de}$ has measure $|\pi(A_{k,\de})|\lesssim T^{-2} 2^{-\frac {2k}3}\la^{\de-\frac13},$ 
so that Proposition  \ref{maxproj} shows that,  for every $\ve>0,$ $\M_{k,\infty}^\la$ satisfies the  estimates

\begin{equation}\label{maxkinftyone}
\|\M_{k,\infty}^\la \|_{L^{1+\ve}\mapsto L^{1+\ve}}\le C_{N,\ve}T2^{-kN}\la \cdot \la^{\de-\frac 13} =T 2^{-kN}\la^{\frac23+\de},
\end{equation}
for every  $\de>0,$ uniformly in $\si.$

\subsubsection{The  contributions given by the $\mu_{k, 1}^\la$ }
We next  estimate the maximal operators $\M_{k,1}^\la$.
First, by applying the method of stationary phase to the integral $J_1(\la,s),$  we easily see that 
$$
\|\widehat{\mu_{k,1}^\la}(\xi)\|_\infty \lesssim  \la^{-\frac 12} (2^k\la^{-1})^{\frac 13} 2^{-\frac k2}=2^{-\frac k6} \la^{-\frac 56}.
$$
This implies that 
\begin{equation}\label{maxlpr}
\|\M_{k,1}^\la\|_{L^2\mapsto L^2}\lesssim T^{\frac 12}\la^{\frac 12} 2^{-\frac k6} \la^{-\frac 56}= T^{\frac 12}2^{-\frac k6} \la^{-\frac 13}.
\end{equation}

\medskip
In order to obtain $L^p$-estimates for $\M_{k, \infty}^\la$ when  $p$ is close to $1,$  we have to analyze more carefully which phase function arises  from the application of the method of stationary phase to $J_1(\la,s).$ To this end, let us put again $z:=(2^{-k}\la)^{\frac23}B_1(s',\si),$  so that the phase \eqref{Psi1} in $J_1(\la,s)$   can be written     
$$
\Psi_1(u_1, s_2,z, \si)=zu_1+B_3( (2^k\la^{-1})^\frac13u_1,s_2, \si)u_1^3,
$$
where $|u_1|\sim 1.$ We may assume that  it has a critical  point 
$u_1^c=u_1^c(s_2,z,s_2)$  of size $|u_1^c|\sim 1,$ for other wise we can integrate by parts in $u_1$ and can then proceed as we did for the $\mu_{k,\infty}^\la.$ Assuming for instance that $z>0,$  by writing $u_1=z^{1/2} w_1,$ we see that $u_1^c$ is of the form 
$u_1^c=z^{1/2} W((2^k\la^{-1})^\frac13 z^{\frac 12},s_2,\si),$ where $W$ is smooth and $|W|\sim 1.$ Moreover, since $B_3(\cdot,0,0)\ne 0$ is constant,  it is easy to see that  by choosing $\si$ and $|s'|$ sufficiently small, we get 
$$
\Psi_1(u_1^c, s_2,z, \si)=z^{\frac 32}H_1((2^k\la^{-1})^\frac13z^{\frac 12},s_2,\si),
$$
with a smooth function $H_1$ such that $|H_1|\sim 1.$ We may thus write 
\begin{equation}\label{J12}
J_1(\la, s,\si)= 2^{-\frac k 6}\la^{-\frac 13}\, e^{-i2^k s_3
z^{\frac 32}H_1((2^k\la^{-1})^\frac13z^{\frac 12},s_2,\si)}a((2^k\la^{-1})^\frac13z^{\frac 12},s_2,\si)s_3^{-\frac 12},
\end{equation}
again with a smooth amplitude $a.$ More precisely, we would also pick up an error term of order $O(2^{-\frac {7k }6}\la^{-\frac 13}),$ which we shall here ignore for the sake of simplicity of the presentation. As explained in \cite{IMmon}, such an error term could be avoided by replacing the ``gain'' $2^{-k/2}$ in this application of the method of stationary phase by a symbol of order $-1/2$ in $2^k$ (which would, however, also depend on further  
variables). Changing then again variables from $s_1$ to $z,$ in analogy with \eqref{muAk2} and \eqref{Phi22} we thus find that 

\begin{eqnarray}\nonumber
&&\mu_{k,1}^\la(y+\Ga(\si))
 =\la^{\frac52}(2^k\la^{-1}) 2^{-\frac k2}
 \int e^{-i \la s_3\Phi_2(z,s_2,y,\si)}
\chi_1(z) \,  \chi_0(s_3s')\chi_1(s_3)\\
&&\hskip8cm a\big(z,(2^{k}\la^{-1})^{\frac 13}, s_2,\si\big)\, ds_2 ds_3 d z,\label{muAk3}
\end{eqnarray}
where $a$ is smooth, and where the phase function $\Phi_2$ is given by 
\begin{eqnarray*}\nonumber
\Phi_2(z, s_2, y,\si)&=&2^k \la^{-1}
z^{\frac 32}H_1((2^k\la^{-1})^\frac13 z^{\frac 12},s_2,\si)\\
&&\hskip-1cm + 
s_2^2G_5(s_2, \sigma)-s_2^{m_1}G_3(s_2, \sigma)y_1 
 -s_2y_2-y_3+(2^k\lambda^{-1})^{\frac{2}{3}}z(s_2G_1(s_2, \sigma)-y_1).
\end{eqnarray*}
Here, $G_5(0,0)\ne 0, G_1(s_2,0)=G_3(s_2,0)= 0,$ $2^k\la^{-1} \ll1,$  $|z|\sim 1, |s_2|\ll1$ 
and $|s_3|\sim 1.$ 

In particular, we see that we  can next apply the method of stationary phase to the integration in $s_2$ (assuming that there is a critical point $s_2^c$ within the domain of integration; otherwise,  we may pick up powers $\la^{-N}$ by means of integrations by parts and are done). 
Let us also write 
$$q:= (2^k\la^{-1})^\frac13,$$
so that $q\ll 1.$

Let us also put  $H_{1,1}(s_2,\si):=H_1(0,s_2,\si).$ Note that $|H_{1,1}|\sim 1, $ since $|H_1|\sim 1$ (cf. \eqref{J12}).  Expanding in powers of $q,$ we  then find that
\begin{eqnarray}\nonumber
\Phi_2(z, s_2, y,\si)&=&q^3z^{\frac 32}H_{1,1}(s_2,\si)+q^2z(s_2G_1(s_2, \sigma)-y_1)\\
&&\hskip-1cm + 
s_2^2G_5(s_2, \sigma)-s_2^{m_1}G_3(s_2, \sigma)y_1 
 -s_2y_2-y_3+O(q^4).\label{Phi23}
\end{eqnarray}

\smallskip

In order to get more precise information on the critical point $s_2^c,$ let us again decompose  
$$
G_1(s_2,\si)=G_{1,1}(\si)+s_2 G_{1,2}(s_2, \si),
$$
as in \eqref{G1decomp}, and similarly
$$
H_{1,1}(s_2,\si)=H_{1,2}(\si)+s_2H_{1,3}(\si) +s_2^2 H_{1,4}(s_2,\si).
$$
Then we may re-write
\begin{eqnarray}\nonumber
\Phi_2(z, s_2, y,\si)&=&q^3 z^{\frac 32}H_{1,2}(\si)-q^2 zy_1 -y_3
-s_2\big(y_2-q^2 z G_{1,1}(\si)-q^3z^{\frac 32}H_{1,3}(\si) \big)\\
&& \hskip3cm +s_2^2 G_6(q^2z,q^3z^{\frac 32}, s_2,y_1,\si)+O(q^4),
\label{Phi24}
\end{eqnarray}
where 
$$
G_6(q^2z,q^3z^{\frac 32}, s_2,y_1,\si):= G_5(s_2,\si) +q^3z^{\frac 32} H_{1,4} (s_2,\si)-s_2^{m_1-2} G_3(s_2,\si) y_1 
+q^2 z G_{1,2} (s_2,\si).
$$
Notice that   $|G_6|\sim 1,$  since $|G_5|\sim 1.$ We thus see that the critical point is of the form 
$$
s_2^c=\Big(y_2-q^2 z G_{1,1}(\si) -q^3z^{\frac 32}H_{1,3}(\si)+O(q^4)\Big) H_{2}(q^2z,q^3z^{\frac 32},y_1,s_2,\si),
$$
where $H_2$ is smooth  and can be assumed to be very close to $1/(2G_6),$ so that in particular it is of size $|H_2|\sim 1.$ It will become important later to observe  that, since $|s_2^c|\ll 1, q\ll1,$ then also necessarily $|y_2|\ll 1.$ 

 Plugging this into \eqref{Phi24}, we find that  after applying the method of stationary phase to the integration in $s_2,$ the new phase is of the form
\begin{eqnarray*}
\Phi_3(z, y,\si)&:=&\Phi_2(z, s_2^c, y,\si)=q^3 z^{\frac 32}H_{1,2}(\si)-q^2 zy_1 -y_3
\\
&&+\Big(y_2-q^2 z G_{1,1}(\si) -q^3z^{\frac 32}H_{1,3}(\si)\Big)^2 H_3(q^2z,q^3z^{\frac 32},y_1,\si)+O(q^4),
\end{eqnarray*}
where  $H_3=H_2(H_2G_6-1)$ is smooth and  very close to $-H_2/2,$ so that  $|H_3|\sim 1.$ 
Decomposing also 
$$
H_3(q^2z,q^3z^{\frac 32},y_1,\si)=H_{3,1}(y_1,\si) +q^2zH_{3,2}(y_1,\si)+q^3z^{\frac 32}H_{3,3}(y_1,\si)+O(q^4),
$$
we see that 
\begin{eqnarray*}\nonumber
\Phi_3(z, y,\si)&=&-(y_3-y_2^2 H_{3,1}(y_1,\si)) \\
&& \hskip-2cm+q^2z\Big(-y_1+y_2^2H_{3,2}(y_1,\si) -2y_2 G_{1,1}(\si) H_{3,1}(y_1,\si) \Big)+q^3 z^{\frac 32}H_{4}(y_1,y_2,\si)+O(q^4),  \end{eqnarray*}
where
$$
H_{4}(y_1,y_2,\si):=H_{1,2}(\si) -2y_2 H_{1,3}(\si)  H_{3,1}(y_1,\si) +y_2^2 H_{3,3}(y_1,\si).
$$
Since $|y_2|\ll 1,$ we see that also $|H_4|\sim 1.$ 
Recall also that  $|G_{1,1}|\ll  1$ is small. By the implicit function theorem, we may therefore write 
\begin{eqnarray*}
-y_1+y_2^2H_{3,2}(y_1,\si) -2y_2G_{1,1}(\si) H_{3,1}(y_1,\si)
 = (y_1-\vp(y_2, \si))H_5(y_1, y_2, \si),
\end{eqnarray*}
with $|H_5|\sim 1.$ We also put $\psi(y_1,y_2,\si):=y_2^2 H_{3,1}(y_1,\si).$  Then  the phase takes on the form
\begin{eqnarray}\nonumber
\Phi_3(z, y,\si)&=&-(y_3-\psi(y_1,y_2,\si))+q^2z(y_1-\vp(y_2, \si))H_5(y_1, y_2, \si) \\
&& \hskip1cm+q^3 z^{\frac 32}H_{4}(y_1,y_2,\si) +O(q^4),   \label{Phi32}
\end{eqnarray}
where $|H_4|\sim 1$ and $|H_5|\sim 1,$ and
 \begin{eqnarray}\nonumber
&&\mu_{k,1}^\la(y+\Ga(\si))
 =\la  2^{\frac k2}
 \int e^{-i \la s_3\Phi_3(z, y,\si)}
\chi_1(z) \,  \chi_0(s_3s')\chi_1(s_3)\\
&&\hskip8cm a\big(z,q, s_2,\si\big)\,ds_3 d z,\label{muAk4}
\end{eqnarray}
where we recall that $q=(2^{k}\la^{-1})^{\frac 13}.$

Choose again a small positive number $\de>0.$ If $q^2|y_1-\varphi(y_2, \si)|>q^{3-\de},$  then we can repeatedly  integrate by parts in $z$ to see that  $\mu_{k,1}^\la(y+\Ga(\si))=O((\la2^k)^{-N})$ for every $N,$ since $\la q^{3-\de}=2^{k(1-\de/3)}\la^{\de/3}.$
Similarly,  if $q^2|y_1-\varphi(y_2, \si)|\le q^{3-\de}$ and  $|y_3-\psi(y_1,y_2,\si)|>q^{3-2\de},$ then the term  
$-(y_3-\psi(y_1,y_2,\si))$
becomes dominant in $\Phi_3,$  so that we can use  integrations by parts in $s_3$  to arrive at the same kind of estimate.

 Thus, the contributions of these regions to the maximal operator are well under control, and we  are left with the control of the contribution by the region where  
$|y_3-\psi(y_1,y_2,\si)|\leq q^{3-2\de}$ and
$|y_1-\varphi(y_2, \si)|\le q^{1-\de}.$

 To this end, observe that $|\pa_z^2\Phi_3(z, y,\si)|\gtrsim q^3.$ We may thus apply van der Corput's lemma to see that in this region, we have that  
$$|\mu_{k,1}^\la(y+\Ga(\si))|\lesssim \la  2^{\frac k2}  2^{-\frac {k}2}=\la
$$ 
(note here that $\la q^3=2^k$).

Hence, by Proposition \ref{maxproj} we obtain similarly as before  that for every $\de>0,$ 
\begin{equation}\label{maxlonepr}
\|\M_{k,1}^\la \|_{L^{1+\ve}\mapsto L^{1+\ve}}\le C_\ve T
\la^{\frac23+\de} 2^{k(\frac13-\de)},
\end{equation}
uniformly in $k$ and $\si.$  Combining the estimates \eqref{l2estl} and \eqref{maxlpr} we find that 
$$
\|\M_{k}^\la\|_{L^2\mapsto L^2}\lesssim T^{\frac 12}2^{-\frac k6} \la^{-\frac 13},
$$
and similar from \eqref{maxkinftyone} and \eqref{maxlonepr} we obtain 
that for every  $\ve >0$ and $\de>0$  sufficiently small, 
$$
\|\M_{k}^\la \|_{L^{1+\ve}\mapsto L^{1+\ve}}\le C_\ve 
T\la^{\frac23+\de} 2^{k(\frac13-\de)},
$$

Interpolating these estimates leads to the estimate \eqref{maxkest1}, which concludes the proof of Lemma \ref{Mlaest}, hence also that of Theorem \ref{nondegairypar}.

\setcounter{equation}{0}
\section{Estimation of  the maximal operator $\M$ when surface has  a\\ $D_4^+$  - type singularity}\label{D4est}

We assume now that $\phi$ has a singularity of type $D_4^+,$ so that we may  assume that  its principal part has the form 
$$
\phi_\pr\x= x_1x_2^2+ x_1^3
$$
(compare Remark \ref{D4}). The  associated principal weight is given by $\ka=(1/3,1/3),$ and the corresponding dilations are given by $\de_r(x_1,x_2)=(r^{\frac 13} x_1, r^{\frac 13}x_2), \ r>0.$

Following our discussion in Section \ref{adaptedc}, by means of a dyadic decomposition, we may  reduce ourselves to the estimation of the maximal operators $\M_k$ , given by  

$$
\M_kf(y,y_3):=\sup_{t>0}|f*(\mu_k)_t(y,y_3)|,
$$
where the re-scaled  measures $\mu_k$ are defined by 
$$
\int f\, d\mu_k:=\int f(x,2^k+\phi^k(x))\, \eta(\delta_{2^{-k}}x)\chi_1(x) dx.
$$
Here we have set 
$$\phi^k(x):=2^k\phi(\de_{2^{-k}}(x))=\phi_\pr(x)+2^k\phi_r(\de_{2^{-k}}(x)).$$ 
Recall that the perturbation  term $2^k\phi_r(\de_{2^{-k}}(\cdot))$ is of order
$O(2^{-\ve k})$ for some $\ve >0.$
By inequality \eqref{mtomk}, we then have 
\begin{equation}\label{mtomk2n}
\|\M\|_{L^p\to L^p}\le \sum_{k= k_0}^\infty  2^{-\frac{2k}3}\|\M_k\|_{L^p\to L^p},
\end{equation}
where we may assume that $k_0$ is sufficiently large. We shall prove that if  $p>h=1/|\ka|=3/2,$ then for every $\de>0$ and $k$ sufficiently large $k,$
\begin{equation}\label{Mkest3}
\|\M_kf\|_p\le C_\de 2^{k(\frac{1}{p}+\de)}\|f\|_p.
\end{equation}
In combination with \eqref{mtomk2n}, this will imply that the maximal operator $\M$ is indeed $L^p$-bounded for every $p>3/2.$ 

\medskip
To this end, recall that  the Hessian determinant of $\phi_\pr$ is given by 
$\Hess(\phi_\pr)(x_1,x_2)=12x_1^2-4x_2^2.$ In order to prove \eqref{Mkest3},  it is sufficient to show that, given any point $x^0$ in the support of $\chi_1$ (which is contained in an annulus on which $|x|\sim 1$), there exist a smooth bump function $\al\ge 0$ supported on a sufficiently small neighborhood of $x^0$ with  $\al(x^0)=1$ so that the corresponding localized maximal operator $\M_k^\al$ satisfies the estimate \eqref{mtomk2}.

If $\Hess(\phi_\pr)(x^0)\ne 0, $ then this has already been shown in Section \ref{adaptedc} (compare estimate \eqref{jest1}). We shall therefore assume that $\Hess(\phi_\pr)(x^0)=0,$
i.e.,  
$$
12(x_1^0)^2-4(x_2^0)^2=0.
$$
But, since $|x^0|\sim 1,$ this implies  
$x_1^0\neq0$ and  $x_2^0\neq0$. By homogeneity, we may then even assume that, say, $x_1^0=1,$ so that $x_2^0=\sqrt{3}$. Then first we shift
 the point $x^0=(1,\,\sqrt{3})$ to the origin in $\bR^2$ by changing to the linear coordinates $z=(z_1,z_2)$ defined by 
 $x_1=1+z_1,\, x_2=\sqrt{3}+z_2$.
In these coordinates, the principal part of $\phi$ is given by
 $$
\phi_\pr(1+z_1,\,
\sqrt{3}+z_2)=4+6z_1+2\sqrt{3}z_2+(\sqrt{3}z_1+z_2)^2+z_1^3+z_1z_2^2.
 $$

Thus, by a linear change of variables in  the ambient space the principal part can be reduced
to the form
$$
\widetilde{\phi_\pr}(y_1,y_2):=\phi_\pr(y_1,y_2-\sqrt{3} y_1)=y_2^2+y_1^3+y_1(y_2-\sqrt{3}y_1)^2=y_2^2+4y_1^3+y_1y_2^2-2\sqrt{3}y_1^2y _2.
$$
Note that the principal part of the function $\widetilde{\phi_\pr}$ is
$y_2^2+4y_1^3$.  Thus, in these coordinates $(y_1,y_2)$  near $(0,0)$ (which corresponds to our original   point $x^0$),    $\tilde\phi$ has a singularity of type $A_2$ and depends smoothly on the small parameter $\si:=2^{-k/3},$ and consequently the estimate \eqref{Mkest3} for $\M_k^\al$ in place of $\M_k$ follows   from Theorem \ref{nondegairypar}.
\smallskip

This completes the proof of Theorem \ref{s1.3}.

\end{document}